\documentclass{article}
\usepackage[utf8]{inputenc}
\usepackage[T1]{fontenc}
\usepackage[latin9]{luainputenc}
\usepackage{geometry}
\geometry{verbose,tmargin=1in,bmargin=1.1in,lmargin=0.9in,rmargin=1in,headheight=0.25in,headsep=0.25in,footskip=0.5in}
\usepackage{babel}
\usepackage{amsmath}
\usepackage{amsthm}
\usepackage{amssymb}
\usepackage[scr=esstix]
       {mathalpha}
\usepackage[unicode=true,pdfusetitle,
 bookmarks=true,bookmarksnumbered=false,bookmarksopen=false,
 breaklinks=false,pdfborder={0 0 1},backref=false,colorlinks=false]
 {hyperref}
\usepackage{dsfont,graphicx}
\usepackage[dvipsnames]{xcolor}

\usepackage{esint}

\usepackage{physics}
\usepackage{amsmath}
\usepackage{tikz}
\usepackage{mathdots}
\usepackage{yhmath}
\usepackage{cancel}
\usepackage{color}
\usepackage{siunitx}
\usepackage{array}
\usepackage{multirow}
\usepackage{gensymb}
\usepackage{tabularx}
\usepackage{extarrows}
\usepackage{booktabs}
\usetikzlibrary{fadings}
\usetikzlibrary{patterns}
\usetikzlibrary{shadows.blur}
\usetikzlibrary{shapes}

\makeatletter
\numberwithin{equation}{section}
\numberwithin{figure}{section}
\theoremstyle{plain}
\newtheorem{thm}{\protect\theoremname}[section]
\theoremstyle{plain}
\newtheorem{prop}[thm]{\protect\propositionname}
\theoremstyle{plain}
\newtheorem{cor}[thm]{\protect\corollaryname}
\theoremstyle{plain}
\newtheorem{lem}[thm]{\protect\lemmaname}
\theoremstyle{definition}
\newtheorem{defn}[thm]{\protect\definitionname}
\theoremstyle{remark}
\newtheorem{rem}[thm]{\protect\remarkname}

\@ifundefined{date}{}{\date{}}

\def\vep{\varepsilon}

\def\be{\begin{equation}}
\def\ee{\end{equation}}

\def\ot{{\overline{t}}}
\def\R{{\mathbb{R}}}
\def\E{{\mathbb{E}}}
\def\P{{\mathbb{P}}}
\def\ut{{\underline{t}}}
\usepackage[nottoc,notlot,notlof]{tocbibind}

\makeatother
\title{Free boundary regularity and well-posedness of physical solutions to the supercooled Stefan problem}
\providecommand{\corollaryname}{Corollary}
\providecommand{\definitionname}{Definition}
\providecommand{\lemmaname}{Lemma}
\providecommand{\propositionname}{Proposition}
\providecommand{\remarkname}{Remark}
\providecommand{\theoremname}{Theorem}
\author{Sebastian Munoz \thanks{Department of Mathematics, University of California, Los Angeles, 90095, USA. \texttt{sebastian@math.ucla.edu}}}
\begin{document}
\maketitle
\begin{abstract}
    We study the regularity and well-posedness of physical solutions to the supercooled Stefan problem. Assuming only that the initial temperature is integrable, we prove that the free boundary, known to have jump discontinuities as a function of the time variable, is $C^1$ as a function of the space variable, and is $C^{\infty}$ in an open set with countable complement, which we describe explicitly. We also prove that, as conjectured in \cite{delarue}, the set of positive times when a jump occurs cannot have accumulation points. In addition, we prove that short-time uniqueness of physical solutions implies global uniqueness, which allows us to obtain uniqueness for very general initial data that fall outside the scope of the current well-posedness regime. In particular, we answer two questions left open in \cite{LedSoj,shkolnikov} regarding the global uniqueness of solutions. We proceed by deriving a weighted obstacle problem satisfied by the solutions, which we exploit to establish regularity and nondegeneracy estimates and to classify the free boundary points. We also establish a backward propagation of oscillation property, which allows us to control the occurrence of future jumps in terms of the past oscillation of the solution. 
\end{abstract}
\noindent \textbf{Keywords:} Stefan problem; obstacle problem; McKean-Vlasov equation;  physical solutions; backward propagation of oscillation;  free boundary; nondegeneracy; blow-up.\\
\noindent \textbf{MSC: } 80A22, 35B44, 35R35, 60H30, 35B05. 

\tableofcontents
\section{Introduction}
We study the regularity and well-posedness of the supercooled Stefan problem
\begin{equation}\begin{cases} \label{supercooled eqs}
    u_t - \frac{1}{2} u_{xx}= 0 & t>0,\; x>\Lambda_t,\\
\dot{\Lambda}_t = \frac{\alpha}{2} u_x(\Lambda_t,t) & t > 0,\\
u(x,0) = u_0(x),\; u(\Lambda_t,t) = 0 & x\geq 0,\; t\geq 0,
\end{cases}
\end{equation}
where $\alpha>0$, $u_0\geq0$ is a probability density, and $\Lambda :[0,\infty)\to [0,\infty)$ is a nondecreasing function.

The problem \eqref{supercooled eqs} models the freezing of supercooled water into ice, with initial temperature $-u_0$, in the one-phase setting where the temperature in the solid is held constant, and equal to $0$. Unlike the classical Stefan problem, \eqref{supercooled eqs} is known to be ill-posed and its classical solutions may in general blow up in finite time, leading to discontinuities in the frontier $\Lambda$ and instantaneous temperature spikes (see \cite{sherman}). Therefore, the problem of defining the solution beyond the blow-up is of central interest.

For this purpose, \eqref{supercooled eqs} can be reformulated in a probabilistic sense.  In this viewpoint, one considers a large population of Brownian particles that interact through the moving boundary: as particles hit the freezing threshold, they stop (modeling solidification) and release heat that instantaneously affects the remaining particles. This leads to the McKean-Vlasov equation
\begin{equation}
    \begin{cases}X_t=X_{0^-}+B_t-\Lambda_t, \\
\tau=\inf\{t\geq 0: X_t \leq 0\},\\
\Lambda_t=\alpha \P(\tau \leq t), 
 \end{cases}
\end{equation}
where $B_t$ is a standard Brownian motion, and $\mathcal{L}(X_{0^-})=u_0(x)dx$. Beyond the physical setting of the Stefan problem, this type of model has also been studied in the context of contagion in large financial markets (see \cite{hambly,HamSoj,LedSoj2,NadShk,NadShk2}) and integrate-and-fire models in neuroscience (see \cite{CarPerSalSme,DelIngRubTan,IngTal}). An advantage of this formulation is that it allows for the moving front $\Lambda$ to have jump discontinuities. We will work with the following equivalent definition, expressed directly in terms of Brownian motion, which is more amenable to our methodology.

\begin{defn} \label{def: probabilistic} Given a standard Brownian motion $B$ on $\R$ with initial distribution $u_0(x)dx$, and a nondecreasing, right-continuous function $\Lambda:[0,\infty) \to [0,\infty)$, we say that $\Lambda$ is a probabilistic solution to the supercooled Stefan problem \eqref{supercooled eqs} if
\begin{equation} 
\Lambda_t=\alpha\P(\tau \leq t), \quad  t\geq0, \;\;\tau=\inf\{t\geq0: B_t \leq \Lambda_t\}.
\end{equation}\end{defn}

The way to recover the original system \eqref{supercooled eqs} from a probabilistic solution, at least formally, is by letting $u(x,t)$ be the density function of the Brownian motion before it stops. Namely, if $\Lambda$ is a probabilistic solution, and $u:\R \times (0,\infty) \to \R$ is given by
\begin{equation} \label{u defi}    \P(B_t\in [a,b], t<\tau)=\int_{a}^bu(x,t)dx,\quad \quad a<b,\; \;t>0,
\end{equation}
then $u$ satisfies \eqref{supercooled eqs}, as long as the solution is sufficiently regular. However, in the presence of jump discontinuities of $\Lambda$, the classical formulation breaks down, and the choice of the jump size is not uniquely determined. This motivates the following definition.
\begin{defn} \label{def: physical} Let $\Lambda$ be a probabilistic solution to \eqref{supercooled eqs}. We say that $\Lambda$ is a physical solution if, for every $t\geq 0$,
\begin{equation} \label{eq: physical defi}
\Lambda_{t}-\Lambda_{t-}=\inf\{x>0: \P(t \leq \tau, B_{t}\in (\Lambda_{t-},\Lambda_{t-}+x])<\alpha^{-1}x\}.
\end{equation}
\end{defn}
The notion of physical solution is a natural one, as it can be shown that these are the solutions for which the jump distance is as small as possible (see \cite[Prop. 1.2]{hambly}). Existence of physical solutions was recently shown by C. Cuchiero, S. Rigger, and S. Svaluto-Ferro in \cite[Thm. 6.5]{CRF} under very general assumptions, requiring only a finite moment for the initial density $u_0$ (see also \cite{DelIngRubTan,NadShk,NadShk2}). Uniqueness was shown first by F. Delarue, S. Nadtochiy, and M. Shkolnikov in \cite[Thm. 1.1]{delarue}, under the assumption that $u_0$ is a bounded function that changes monotonicity finitely often on compact sets. More recently, uniqueness was obtained for a broad class of oscillatory initial conditions in \cite[Thm. 1.1]{shkolnikov}, albeit requiring the hard $L^{\infty}$ constraint $u_0\leq \alpha^{-1}$. Local uniqueness was shown for very general initial data by S. Ledger and A. S\o jmark in \cite[Thm. 3.1]{LedSoj}, requiring only a natural assumption for $u_0$ near $x=0$.

In this paper, we study the regularity of physical solutions and their free boundary, while making no assumptions on the initial density $u_0$. Therefore, our regularity results apply equally to the locally monotone regime of \cite{delarue}, to the oscillatory regime of \cite{shkolnikov}, and to the possibly non-unique solutions of \cite{CRF}. We also advance the well-posedness theory considerably, by proving global uniqueness of physical solutions for initial data that fall outside the scope of \cite{delarue,shkolnikov}, under only the local condition of \cite[Thm. 3.1]{LedSoj}.

Our first main result answers an open question from \cite{delarue}, namely whether the time discontinuities of $\Lambda$ can have accumulation points. It states that no positive time can be an accumulation point for the jump times. 
\begin{thm} \label{thm: jumps}Assume that $u_0$ is a probability density on $[0,\infty)$, and let $\Lambda$ be a physical solution to \eqref{supercooled eqs}. Then the set of discontinuities of $\Lambda$ is locally finite in $(0,\infty)$.    
\end{thm}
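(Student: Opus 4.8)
The plan is to argue by contradiction: suppose there is a sequence of jump times $t_n \downarrow t_\infty > 0$ (or $t_n \uparrow t_\infty$), and derive a contradiction by showing that the accumulated jump mass near $\Lambda_{t_\infty}$ is incompatible with the total mass $1$ carried by $u_0$, together with the heat-equation regularity of $u$ away from the free boundary. The key quantitative input is the \emph{physical jump condition} \eqref{eq: physical defi}: at each jump time $t_n$, the solution absorbs a quantity of mass of order $\alpha^{-1}(\Lambda_{t_n}-\Lambda_{t_n^-})$ from the interval $(\Lambda_{t_n^-},\Lambda_{t_n}]$, since on that interval $\P(t_n\le\tau,\, B_{t_n}\in(\Lambda_{t_n^-},\Lambda_{t_n^-}+x])$ reaches the threshold $\alpha^{-1}x$. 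So a genuine jump of size $\de_n := \Lambda_{t_n}-\Lambda_{t_n^-}>0$ forces at least $\sim \alpha^{-1}\de_n$ units of probability to be frozen at time $t_n$; more importantly, just before $t_n$ the density $u(\cdot,t_n^-)$ must be ``saturated'', i.e.\ it must have average value $\ge \alpha^{-1}$ on a whole interval of length $\de_n$ to the right of $\Lambda_{t_n^-}$.

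The main step is then a \textbf{no-accumulation-of-mass} estimate. First I would use that between consecutive jump times the front $\Lambda$ evolves continuously and the total frozen mass $\alpha^{-1}\Lambda_t = \P(\tau\le t)$ is nondecreasing and bounded by $1$; hence $\sum_n \de_n \le \alpha < \infty$, so the jump sizes $\de_n\to 0$. This alone does not rule out accumulation, so the real work is to show that \emph{infinitely many} jumps accumulating at $t_\infty$ is impossible. For this I would invoke the backward propagation of oscillation property advertised in the abstract: if a jump of size $\de_n$ occurs at time $t_n$, then at earlier times $t\in(t_\infty-h,t_n)$ the solution $u(\cdot,t)$ must already have oscillation (or supremum) bounded below near the front, on a scale controlled by $\de_n$ and $t_n - t$. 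Summing these lower bounds over the infinitely many $n$ with $t_n$ close to $t_\infty$, and using the parabolic regularity of $u$ on $\{x>\Lambda_t\}$ strictly before $t_\infty$ (where, by local uniqueness / no jump at a fixed earlier time, $u(\cdot,t_\infty - h)$ is a bona fide bounded function with finite mass), I would reach a contradiction with $\int u(\cdot, t_\infty-h)\,dx \le 1$ — or with the a priori modulus of continuity of a caloric function. The alternative packaging is via the weighted obstacle problem: accumulation of jumps would force the obstacle to be touched on a sequence of shrinking intervals clustering at $(\Lambda_{t_\infty},t_\infty)$, violating the nondegeneracy/regularity estimates for solutions of that obstacle problem established earlier.

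The case $t_n \uparrow t_\infty$ and the case $t_n\downarrow t_\infty$ should be handled slightly differently: for $t_n\downarrow t_\infty$ one examines the configuration at time $t_\infty$ itself (right-continuity gives $\Lambda_{t_\infty}=\lim$, and one shows $u(\cdot,t_\infty^+)$ cannot be simultaneously saturated on intervals just to the right of $\Lambda_{t_\infty}$ at arbitrarily small later times while heat diffuses); for $t_n\uparrow t_\infty$ one works at a fixed time $t_0<t_\infty$ and propagates forward. In both cases the crux is the same: a jump requires a saturated slab of the density of definite relative height $\alpha^{-1}$, the diffusion equation relaxes such slabs at a definite rate once they detach from the boundary, and there simply is not enough mass to keep re-saturating infinitely often in a finite time window. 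The step I expect to be the genuine obstacle is making the ``re-saturation costs mass / costs oscillation'' heuristic quantitative and \emph{uniform} as $\de_n\to 0$ — i.e.\ proving the backward propagation of oscillation lemma with constants that do not degenerate as the jump size shrinks, which is presumably why the paper isolates that property as a separate tool.
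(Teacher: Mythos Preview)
Your plan has the right scaffolding (contradiction, separate treatment of right-accumulation vs.\ left-accumulation, and an appeal to backward propagation of oscillation), but the mechanism you propose for the contradiction is not the one that works, and I do not see how to repair it. The mass-counting heuristic---each jump freezes $\alpha^{-1}\de_n$ and ``re-saturation costs mass''---does not yield a contradiction: the sum $\sum_n \de_n \le \alpha$ is automatic from the boundedness of $\Lambda$, and there is no obstruction to the density climbing back to $\alpha^{-1}$ near the boundary infinitely often without violating $\int u\le 1$ or any caloric modulus. Heat diffusion redistributes mass; it does not consume it. Likewise the obstacle-problem nondegeneracy estimates control the \emph{size} of $w$ near contact points but do not by themselves forbid infinitely many disjoint contact slabs.

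The actual argument interprets ``oscillation'' not as $\osc u$ but as \emph{sign changes of $u_x$}, and the contradiction is with analyticity rather than mass. The physical condition forces, at each jump time $t_*$, a point $x_1\in(\Lambda_{t_*^-},\Lambda_{t_*})$ with $u_x(x_1,t_*^-)<0$; a maximum-principle argument (using a barrier curve along which $u_x<0$) then shows that for a \emph{further} jump to occur later one also needs a point $x_2\in(x_1,\gamma(t_*))$ with $u_x(x_2,t_*^-)>0$. These sign changes are then carried backward to a fixed time $t_0<t_\infty$ along continuous curves in $\{u_x=0\}$ (built from the local structure of the nodal set of $u_x$ for the heat equation), and the construction guarantees the curves are ordered and disjoint. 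Iterating over $N$ jump times produces $2N$ monotonicity changes of $u(\cdot,t_0)$ on the compact interval $[\Lambda_{t_0},\gamma(t_0)]$. Since $u(\cdot,t_0)$ is analytic on $(\Lambda_{t_0},\infty)$ (and strictly increasing at the boundary by the nondegeneracy estimate), it has only finitely many critical points there, contradicting $N$ arbitrary. The step you flagged as the genuine obstacle---uniformity as $\de_n\to 0$---is resolved precisely because the argument is combinatorial (counting sign changes) rather than quantitative (estimating mass), so no constants degenerate.
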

For the locally monotone solutions of \cite[Thm. 1.1]{delarue}, it is already known that $t=0$ cannot be an accumulation point, so Theorem \ref{thm: jumps} implies that the jumps do not accumulate at all. On the other hand, if the initial data is highly oscillatory, as in \cite{shkolnikov}, our result says that these oscillations are resolved instantly, so that the jumps can only accumulate at the initial time. 

Next, we present our results concerning the regularity of the free boundary. Since the function $\Lambda_t$ may have infinite speed, and even jump discontinuities, any global regularity statement requires representing the boundary as a function of the space variable. To state the theorem, we define the freezing time $s:\R \to [0,\infty]$ by
\begin{equation} \label{s defi}
    s(x)=\inf\{t>0: \Lambda_t>x\}.
\end{equation}
The freezing time satisfies $s(x)\equiv 0$ for $x\leq \Lambda_0$, and $s(x)\equiv \infty$ for $x\geq \alpha$. For $x\in (\Lambda_0,\alpha)$, one has $s(x)\in (0,\infty)$, and $s$ is a left inverse for $\Lambda$, namely
\[s(\Lambda_t)=t,\quad t\in (0,\infty).\]
Indeed, since $\Lambda$ can be shown to be strictly increasing (see Lemma \ref{lem: lambda strict incr}), $s$ is continuous. The jump discontinuities of $\Lambda$ then correspond to intervals in which $s$ is constant. Our first regularity result says that the free boundary is of class $C^1$, and it is smooth outside of a countable set.
\begin{thm}\label{thm: C1 regu} Assume that $u_0$ is a probability density on $[0,\infty)$, let $\Lambda$ be a physical solution to \eqref{supercooled eqs}, and let $s$ be given by \eqref{s defi}. Then $s\in C^{1}((\Lambda_0,\alpha))$, and there exists an open set $R \subset (\Lambda_0,\alpha)$ such that $(\Lambda_0,\alpha)\backslash R$ is countable, with $s \in C^{\infty}(R)$.
\end{thm}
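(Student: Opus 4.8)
The plan is to exploit the weighted obstacle problem promised in the abstract. The idea is that $u$, or more precisely a suitable potential/antiderivative $w$ built from $u$ and the Gaussian weight coming from the heat kernel, solves an obstacle problem with contact set $\{x \le \Lambda_t\}$, so that the free boundary regularity for $s$ becomes an instance of classical obstacle-problem free boundary regularity (à la Caffarelli). First I would record the regularity and nondegeneracy estimates that the obstacle-problem formulation yields: interior $C^{1,1}$ (or at least sharp $C^{1,\beta}$) bounds on $w$ up to the free boundary, together with a quadratic nondegeneracy lower bound $\sup_{B_r(z)} w \gtrsim r^2$ at free boundary points $z$, uniformly on compact subsets of $(\Lambda_0,\alpha)\times(0,\infty)$. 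These two ingredients are exactly what is needed to run the blow-up classification: at every free boundary point the rescalings $w_r(y,\sigma) = r^{-2} w(z + r y, t + r^2 \sigma)$ are precompact and converge to a global solution of the obstacle problem (a caloric function with the obstacle constraint), which by the parabolic version of the Caffarelli dichotomy is either a half-space solution (a multiple of $((y\cdot e)^-)^2$ for a spatial direction $e$) or a polynomial solution. Because the contact set here is precisely the subgraph $\{x \le \Lambda_t\}$ and $\Lambda$ is monotone, the blow-up must be of half-space type with $e = \pm$ the spatial axis, which forces $\Lambda$ (equivalently $s$) to be differentiable at that point with a derivative read off from the blow-up; a continuity argument for the blow-up direction, standard once nondegeneracy is in hand, upgrades this to $s \in C^1((\Lambda_0,\alpha))$.

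For the $C^\infty$ statement, I would define $R$ to be the set of free boundary points that are "regular" in the strong sense — those where the nondegeneracy is quadratic and the blow-up is the half-space solution with a nonvanishing opening, equivalently the points where $u_x(\Lambda_t, t)$ (the Stefan velocity) is strictly negative, equivalently where $\dot\Lambda_t < \infty$ in a quantitative sense. On such a set the problem is locally a classical (nondegenerate) one-phase Stefan/obstacle problem with a $C^{1,\beta}$ boundary, so the hodograph–Legendre transform flattens it and Schauder bootstrapping gives $s \in C^\infty(R)$; openness of $R$ follows from the continuity of the blow-up data. The complement $(\Lambda_0,\alpha)\setminus R$ consists of "degenerate" points, where either the contact set is locally thin (polynomial blow-up) or the half-space opening degenerates; these are exactly the points $x$ at which $s$ has a vanishing derivative, i.e. $\dot\Lambda_{s(x)} = +\infty$, which includes the endpoints of the intervals of constancy of $s$ corresponding to the jumps of $\Lambda$. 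The key point is that this degenerate set is countable: I would argue this via a monotonicity/dimension-reduction argument — at a degenerate point the Weiss-type monotonicity formula has a strictly higher density, and a standard covering/summation argument (each degenerate point carries a definite amount of "lost mass" at a given scale, and $u_0$ is a probability density so the total is finite) shows the degenerate points are isolated at each scale, hence countable overall. Theorem 1.2 (local finiteness of the jump set) can be invoked here to handle the contribution of the jump times directly.

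The main obstacle I anticipate is establishing the obstacle-problem reformulation with estimates that are robust enough to require \emph{no} assumption on $u_0$ beyond integrability: the weight is degenerate (Gaussian, hence vanishing/unbounded at the spatial ends) and the "obstacle" is itself only as regular as the solution, so the standard Caffarelli theory does not apply verbatim and one must prove the $C^{1,1}$-type regularity and the quadratic nondegeneracy by hand, carefully tracking the behavior near $t = 0$ and near $x = \Lambda_0$ and $x = \alpha$. Closely tied to this is proving the nondegeneracy lower bound uniformly up to the free boundary — without it the blow-ups could collapse and the classification fails — and this is presumably where the "backward propagation of oscillation" property mentioned in the abstract enters, ruling out the pathological collapse by controlling the solution's oscillation near a would-be degenerate boundary point in terms of its past. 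Once the reformulation and the two a priori estimates are secured, the remainder (blow-up classification, $C^1$ regularity, openness of $R$, smoothness on $R$, countability of the complement) is a fairly standard, if lengthy, run through obstacle-problem technology.
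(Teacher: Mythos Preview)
Your overall framework---reduce to a parabolic obstacle problem for the potential $w(x,t)=\int_t^\infty u(x,s)\,ds$, classify blow-ups, split into regular and singular points---matches the paper, but the $C^1$ argument has a genuine gap. Your claim that monotonicity of $\Lambda$ forces the blow-up to be of half-space type is false: monotonicity rules out $\alpha^{-1}(x^-)^2$ and the polynomials $\alpha^{-1}(-mt+(1-m)x^2)$ with $m<1$, but it does \emph{not} rule out $p_2(x,t)=-\alpha^{-1}t$ (the case $m=1$), whose contact set $\{t\ge 0\}$ is perfectly compatible with a monotone subgraph free boundary. These are exactly the paper's singular points, characterized by $u(x_0,s(x_0)^-)=\alpha^{-1}$, and they do occur. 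Consequently your route to $C^1$---``differentiability read off from the half-space blow-up, then continuity of the blow-up direction''---breaks down at every singular point. The paper handles this with two ad hoc lemmas: at a singular $x_0\notin\overline J$ one combines the expansion $w(x,t)=\alpha^{-1}(t_0-t)+o(|t-t_0|+|x-x_0|^2)$ with the obstacle-problem nondegeneracy to get $|s(x)-s(x_0)|=o(|x-x_0|^2)$, hence $s'(x_0)=0$; continuity of $s'$ across singular points then uses the one-sided bounds $u_t\le C$, $u_{xx}\le C$ in $\{u>0\}$ together with $u(x_0,s(x_0)^-)>0$ to force $s'(x_n)=2/(\alpha u_x(\Lambda_{s(x_n)},s(x_n)))\to 0$ along regular sequences $x_n\to x_0$. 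Neither step is ``continuity of blow-up direction''.

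Two further corrections. First, the weight in the obstacle problem is not Gaussian: one has $w_t-\tfrac12 w_{xx}=-\nu(x)\chi_{w>0}$ with $\nu$ the density of the stopped particle (equal to $\alpha^{-1}$ off $J$, and to $u(x,s(x)^-)$ on $J$). The classical blow-up analysis applies only where $\nu$ is locally constant, so Theorem~1.2 (non-accumulation of jumps) is logically prior to the blow-up step, not merely a bookkeeping device for countability. Second, your proposed countability argument via Weiss monotonicity and mass summation is both overcomplicated and not obviously valid in $1{+}1$ dimensions; the paper instead observes that every singular point $x_0$ has a right-neighborhood $(x_0,x_0+\delta(x_0))\subset R$ (a consequence of the fact that solutions become locally monotone for $t>0$ and hence fall into the regime of \cite{delarue}), and at most $\lceil n\alpha\rceil$ points can have $\delta(x_0)\ge 1/n$. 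Finally, the nondegeneracy estimate does not come from backward propagation of oscillation but from a barrier argument built on the lower bound $\nu\ge c_0>0$, which is where the physical jump condition enters; the backward-propagation machinery is used solely to prove Theorem~1.2.
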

To give a more precise description of the free boundary, we classify the elements of $(\Lambda_0,\alpha)$ into regular points and singular points, as follows. Let $J \subset (\Lambda_0,\alpha)$ be the set of points located strictly inside a jump, namely
\begin{equation}
J:=\{x\in \R  : \Lambda_{t-}<x<\Lambda_t \text{ for some } t\in (0,\infty) \}=\{x\in (\Lambda_0,\alpha)  : \Lambda_{s(x)-}<x<\Lambda_{s(x)} \}.
\end{equation}

\begin{defn} \label{def: reg sing} Let $x_0 \in (\Lambda_0,\alpha)$. We say that $x_0$ is a regular point if \begin{equation} u(x_0,s(x_0)-):=\lim_{t\uparrow s(x_0)}u(x_0,t)=0 \quad\quad  \text{or} \quad\quad  x_0\in J ,\end{equation} and we say that $x_0$ is a singular point if 
\begin{equation}x_0\notin J,\;\; u(x_0,s(x_0)-)=\alpha^{-1} \quad \quad\quad \text{or}\quad\quad \quad \Lambda_{s(x_0)-}<\Lambda_{s(x_0)}, \;\; x_0 \in \{\Lambda_{s(x_0)-},\Lambda_{s(x_0)}\}.\end{equation} Let $R$ denote the set of regular points, and $S$ denote the set of singular points. If $x_0 \in R$, we say that $(x_0,s(x_0))$ is a regular free boundary point, and if $x_0\in S$, we say that $(x_0,s(x_0))$ is a singular free boundary point.
\end{defn}
The regular free boundary points are therefore those where either $u$ vanishes continuously or those located strictly inside a jump. The singular free boundary points are either the endpoints of a jump, or points where $\Lambda$ is continuous but $u$ jumps down to $0$ from the critical value $\alpha^{-1}$. This definition is justified by the next result, which guarantees that the sets $R$ and $S$ are disjoint, and they exhaust all possible free boundary points, so that $R \cup S=(\Lambda_0,\alpha)$. 
\begin{thm}\label{thm: classification}Assume that $u_0$ is a probability density on $[0,\infty)$, let $\Lambda$ be a physical solution to \eqref{supercooled eqs}, let $u$ be given by \eqref{u defi}, and let $s:(\Lambda_0,\alpha)\to (0,\infty)$ be given by \eqref{s defi}. Let $R$ be the set of all regular points, and let $S$ be the set of all singular points. Then $R\cap S=\emptyset$ and $R\cup S=(\Lambda_0,\alpha)$. Moreover, the set $S$ is countable, the set $R$ is open, $s\in C^{\infty}(R)$, and $u$ is smooth up to the regular part of the free boundary,
\begin{equation} \label{u smooth} \quad u\in C^{\infty}(\{(x,t): x\in R,\; t\leq s(x)\}).\end{equation}  Finally, if one lets
\begin{equation}U:=R\backslash J=\{x\in (\Lambda_0,\alpha): u(x,s(x)-)=0\},    \end{equation} then $U$ is open, \[U=\{x\in(\Lambda_0,\alpha): s'(x)>0\}, \quad \Lambda\in C^{\infty}(s(U)), \] and
\begin{equation} \label{classical speed formula}
    \dot \Lambda_{s(x)}=\frac{1}{s'(x)}=\frac{\alpha}{2}u_x(\Lambda_{s(x)},s(x)),\quad x\in U.
\end{equation}
\end{thm}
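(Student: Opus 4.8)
The plan is to realize the physical solution as the solution of a weighted obstacle problem and then run the standard obstacle-problem dichotomy (blow-up analysis, monotonicity formula, classification into regular/singular points), adapted to the parabolic, one-phase, supercooled setting. Concretely, I would first record from the probabilistic formulation that the density $u$ from \eqref{u defi} solves the heat equation in $\{x>\Lambda_t\}$ with $u=0$ on the free boundary, and that the Stefan condition $\dot\Lambda_t=\tfrac{\alpha}{2}u_x(\Lambda_t,t)$ encodes the mass balance $\Lambda_t=\alpha\,\P(\tau\le t)=\alpha\int_0^t(-\tfrac12 u_x(\Lambda_r,r))\,dr$. The key reduction is to introduce a potential-type variable --- integrating $u$ in $x$ starting from the free boundary, or equivalently considering $w(x,t)=\int_{\Lambda_t}^x\!\!\int_{\Lambda_t}^y u(z,t)\,dz\,dy$ type quantities, plus the appropriate correction coming from $\Lambda$ --- so that $w\ge 0$, $w$ solves an inequality of obstacle type $(\partial_t-\tfrac12\partial_{xx})w\le 0$ with equality where $w>0$, and the coincidence set $\{w=0\}$ is exactly the frozen region $\{x\le\Lambda_t\}$. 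The ``weight'' in the obstacle problem comes from the fact that $u$ need not be bounded (only $L^1$), so the obstacle problem is satisfied in a weighted or renormalized sense; establishing that this formulation is valid with only $u_0\in L^1$ is where the analytic care is concentrated.

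Next I would carry out the local regularity theory at a free boundary point $(x_0,t_0)$ with $t_0=s(x_0)$, $x_0\in(\Lambda_0,\alpha)$. The steps, in order: (1) \emph{Non-degeneracy and growth estimates} --- from the obstacle formulation, $w$ grows at least quadratically away from $\{w=0\}$ in space and the natural parabolic scaling, giving two-sided control $c\,d^2\le w\le C\,d^2$ on the relevant scales, where $d$ is the parabolic distance to the free boundary; these are the estimates the excerpt says are derived from the weighted obstacle problem. (2) \emph{Blow-up classification} --- rescale $w_\lambda(x,t)=\lambda^{-2}w(x_0+\lambda x, t_0+\lambda^2 t)$; by the growth bounds, subsequential limits $w_0$ exist and are global solutions of the (half-)obstacle problem, hence by the parabolic obstacle-problem Liouville theorem $w_0$ is either a half-space solution (a multiple of $((x\cdot e)_-)^2$, or a time-like polynomial) --- the \emph{regular} case --- or a polynomial solution vanishing on a lower-dimensional set --- the \emph{singular} case. (3) \emph{Matching the blow-up to the trace of $u$} --- here one shows the regular blow-up corresponds exactly to $u(x_0,s(x_0)^-)=0$ (the front leaves $x_0$ with positive speed, $w\sim c(x-\Lambda_t)^2$) or to $x_0\in J$ (interior of a jump, where the whole segment freezes at once), while the singular blow-up corresponds to $u(x_0,s(x_0)^-)=\alpha^{-1}$ (the critical density at which the obstacle constraint $u\le\alpha^{-1}$ is saturated and the front jumps), or to $x_0$ being a jump endpoint. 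This gives $R\cap S=\emptyset$ and $R\cup S=(\Lambda_0,\alpha)$.

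Then I would upgrade regular points to an open set with smooth free boundary: at a regular point the blow-up is unique and non-degenerate, so by the parabolic counterpart of Caffarelli's argument (directional monotonicity of $w$ in a cone of directions, implying the free boundary is a Lipschitz then $C^1$ then $C^\infty$ graph) one gets that near $(x_0,s(x_0))$ the free boundary is a smooth graph $t=s(x)$; boundary Schauder estimates for the heat equation in this smooth domain with zero Dirichlet data then give \eqref{u smooth}. Openness of $R$ follows because the regular/non-degenerate condition is stable under the blow-up convergence. For countability of $S$: the singular set splits into jump endpoints --- at most two per jump, and by Theorem~\ref{thm: jumps} the jumps are locally finite in $(0,\infty)$, hence countably many --- together with the ``critical trace'' points $\{u(x_0,s(x_0)^-)=\alpha^{-1}\}$, which I would show are isolated (or at worst countable) by a dimension-reduction / Federer-type argument on the singular stratum, or more simply by noting each such point is the limit from the left of the strictly-increasing $s$ with $s'=0$ there, so they correspond to the at-most-countably-many ``plateau'' structure of the monotone function $s$. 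Finally, the identity $U=R\setminus J=\{s'>0\}$ is immediate from the definitions once $C^1$ regularity of $s$ (Theorem~\ref{thm: C1 regu}) is in hand: $x\in J$ iff $s$ is locally constant near $x$ iff $s'(x)=0$ on a neighborhood, and at a regular point not in $J$ the non-degenerate blow-up forces $\dot\Lambda_{s(x)}\in(0,\infty)$, i.e. $s'(x)>0$; then \eqref{classical speed formula} is just the Stefan condition read off the smooth blow-up profile, combined with $\dot\Lambda_{s(x)}=1/s'(x)$ from differentiating $s\circ\Lambda=\mathrm{id}$, and $\Lambda\in C^\infty(s(U))$ follows since on $s(U)$ the inverse function $\Lambda$ of the smooth function $s$ has non-vanishing derivative.

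The main obstacle I expect is twofold: first, setting up the weighted obstacle problem rigorously under the mere assumption $u_0\in L^1$ --- in particular justifying that the blow-up limits are genuine global obstacle-problem solutions despite possible unboundedness of $u$, and that the monotonicity formula survives the weight --- and second, the precise identification in step (3) of the blow-up type with the pointwise value $u(x_0,s(x_0)^-)\in\{0,\alpha^{-1}\}$, which is what ties the PDE classification to the physical-solution condition \eqref{eq: physical defi} and rules out intermediate traces; this dichotomy $u(x_0,s(x_0)^-)\in\{0,\alpha^{-1}\}$ is really the analytic heart of the theorem and is where the physical (minimal-jump) selection principle must be used.
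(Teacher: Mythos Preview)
Your broad strategy---obstacle problem, blow-up classification, regular/singular dichotomy---is right, but several concrete steps are off. First, the potential: the paper uses the \emph{time} integral $w(x,t)=\int_t^\infty u(x,s)\,ds$ (the Duvaut transform), which satisfies $w_t-\tfrac12 w_{xx}=-\nu(x)\chi_{w>0}$ with $w_t=-u\le 0$ and $\{w>0\}=\{u>0\}$. Your proposed double space-integral from $\Lambda_t$ does not yield a clean obstacle problem, since the moving lower limit jumps and differentiation in $t$ produces uncontrolled boundary terms. Second, the ``weight'' is not a renormalization artifact of unbounded $u_0$; it is the concrete stopped-particle density $\nu(x)$ from \eqref{nu defi}, equal to $\alpha^{-1}$ off jump intervals and to $u(x,s(x)^-)$ inside them. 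The crucial step you miss is that Theorem~\ref{thm: jumps} (locally finite jump times) is needed \emph{before} the blow-up analysis, not just for counting jump endpoints: it guarantees that any $x_0\notin\overline J$ has a full neighborhood where $\nu\equiv\alpha^{-1}$, so the obstacle problem becomes the standard constant-source one and the classical parabolic blow-up classification (Caffarelli--Petrosyan--Shahgholian, Blanchet--Dolbeault--Monneau) applies directly to give $u(x_0,s(x_0)^-)\in\{0,\alpha^{-1}\}$. Without this reduction to constant weight, no monotonicity formula is available.

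Third, you invoke Theorem~\ref{thm: C1 regu} to obtain $U=\{s'>0\}$, but in the paper Theorem~\ref{thm: C1 regu} is proved \emph{after} and \emph{using} Theorem~\ref{thm: classification}; your argument is circular. The paper instead gets $s'>0$ on $U$ directly from the linear non-degeneracy estimate $u(x,t)\ge c(x-\Lambda_t)^+$ (Proposition~\ref{prop: u nondegeneracy}), which forces $u_x(\Lambda_t^+,t)\ge c>0$ and hence $\dot\Lambda_t>0$; on $J$ one has $s'=0$ trivially. Finally, your countability arguments for $S$ (Federer-type dimension reduction, plateau structure of monotone $s$) are unnecessary: the paper simply observes, via Lemma~\ref{lem: right accum}, that every $x_0\in(\Lambda_0,\alpha)$ has a right-interval $(x_0,x_0+\delta(x_0))\subset R$, from which countability of $S$ is immediate.
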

This result provides a thorough classification of the free boundary points, guaranteeing that the solution and its free boundary are smooth outside of an explicitly described countable set.

To place Theorem \ref{thm: classification} in context, we compare it with the regularity result of \cite[Thm. 1.1]{delarue}, which was proved under the assumption that $u_0$ is bounded and changes monotonicity finitely often on compact intervals. In that result, the authors showed that for each time $t_0$, there are three possibilities: (i) if \(u(\Lambda_{t_0-},t_{0}^{-})=0\) and $u_x(\Lambda_{t_0-},t_0-)<\infty$, then \(\Lambda\) is \(C^{1}\) on some interval \([t_{0},t_{0}+\varepsilon)\);  
(ii) if $u_x(\Lambda_{t_0-},t_0-)=\infty$ but \(u(\Lambda_{t_0-},t_{0}^{-})<\alpha^{-1}\) then \(\Lambda\) is \(C^{1/2}\) on \([t_{0},t_{0}+\varepsilon)\);  
(iii) if \(u(\Lambda_{t_0-},t_{0}-)\ge \alpha^{-1}\), then \(\Lambda\) is neither \(C^{1}\) nor \(C^{1/2}\) at \(t_{0}\), and a jump may occur at $t_0$. 

Theorem \ref{thm: classification} shows that the intermediate case (ii) never occurs at any positive time $t_0$. In fact, one always has $u(\Lambda_{t_0-},t_0-)\in \{0,\alpha^{-1}\}$ at any point of continuity, so the frontier $\Lambda$ is automatically smooth near $t_0$ whenever $\lim_{t \uparrow t_0}u(\Lambda_{t_0-},t)<\alpha^{-1}.$ Moreover, our result guarantees maximal regularity in a full neighborhood $(t_0-\vep,t_0+\vep)$, rather than a one-sided interval $[t_0,t_0+\vep)$. This upgrade is possible in part thanks to Theorem \ref{thm: jumps}, which rules out the accumulation of jumps to the left of $t_0$. We also highlight that Theorem \ref{thm: classification} is valid without the requirement that $u_0$ is bounded or locally monotone.

Our last main result is a uniqueness claim.
\begin{thm} \label{thm: local to global uniqueness}
    Assume $u_0$ is a probability density, let $t_0>0$, and assume that any two physical solutions to \eqref{supercooled eqs} agree on $[0,t_0]$. Then there exists at most one physical solution to \eqref{supercooled eqs}.
\end{thm}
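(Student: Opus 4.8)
The plan is a continuation argument in time. Let $T^{*}$ be the supremum of all $t\ge 0$ such that every physical solution of \eqref{supercooled eqs} agrees on $[0,t]$; this set is nonempty (it contains $0$) and downward closed. If there is at most one physical solution the theorem holds trivially, so we may assume there are at least two; the hypothesis then gives $T^{*}\ge t_{0}>0$. I will show $T^{*}=\infty$, which is precisely the claim, so assume for contradiction that $T^{*}<\infty$. The first point is that all physical solutions in fact coincide on the \emph{closed} interval $[0,T^{*}]$, including the jump at $T^{*}$ if there is one. Indeed, they coincide on $[0,T^{*})$ by definition of $T^{*}$, so they share the left-limit $\ell:=\Lambda_{(T^{*})^{-}}$; moreover, for any physical solution $\Lambda$ with stopping time $\tau$ one has the pathwise identity $\{\tau\ge T^{*}\}=\{B_{t}>\Lambda_{t}\ \text{for every}\ t<T^{*}\}$, so the joint law of $\bigl(B_{T^{*}},\mathbf{1}_{\{\tau\ge T^{*}\}}\bigr)$ depends only on $\Lambda|_{[0,T^{*})}$, which is common to all physical solutions. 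Hence the right-hand side of the physical-jump relation \eqref{eq: physical defi} at time $T^{*}$ is the same for all of them, so the value $a:=\Lambda_{T^{*}}$ is common.

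Next I would restart the problem at the deterministic time $T^{*}$. Write $m:=\P(\tau>T^{*})$ (common, by the previous step); if $m=0$ then $\Lambda_{T^{*}}=\alpha$ and every physical solution is constant equal to $\alpha$ on $[T^{*},\infty)$, so the solutions agree everywhere, contradicting $T^{*}<\infty$. So assume $m>0$, let $\nu$ be the common probability density obtained by normalizing the survivor law, $\nu(dy)=m^{-1}\,\P\bigl(B_{T^{*}}-a\in dy,\ \tau>T^{*}\bigr)$ on $(0,\infty)$ — a genuine density, being the law of the survivors at a positive time — and set $\alpha':=\alpha m$. By the strong Markov property of $B$ at $T^{*}$, for every physical solution $\Lambda$ of \eqref{supercooled eqs} the shifted frontier $\widehat\Lambda_{s}:=\Lambda_{T^{*}+s}-a$ is a physical solution of the supercooled Stefan problem with initial density $\nu$ and parameter $\alpha'$: on $\{\tau>T^{*}\}$ one has $\widehat\tau=\tau-T^{*}$ and $\widehat\Lambda_{s}=\alpha\,\P(T^{*}<\tau\le T^{*}+s)=\alpha'\,\P^{\nu}(\widehat\tau\le s)$, and the physical condition \eqref{eq: physical defi} for $\widehat\Lambda$ with parameter $\alpha'$ at time $s$ is just \eqref{eq: physical defi} for $\Lambda$ with parameter $\alpha$ at time $T^{*}+s$ rewritten in the restarted variables. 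I would isolate this as a short ``restart lemma''.

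The decisive step is that $\nu$ is subcritical near the origin, so that a short-time uniqueness result applies to the restarted problem. The key structural fact, to be extracted from the physicality relation \eqref{eq: physical defi} together with the regularity and non-degeneracy properties of $u$, is that a physical solution carries no supercritical mass immediately ahead of its frontier at any positive time; concretely $\nu\bigl((0,h]\bigr)\le(\alpha')^{-1}h$ for all small $h>0$, with strict inequality unless $u$ attains the critical value $\alpha^{-1}$ at $(a,T^{*})$ — in which case one invokes in addition Theorem \ref{thm: classification}, which forces this boundary value to be exactly $0$ or $\alpha^{-1}$, and Theorem \ref{thm: jumps}. Thus $\nu$ meets the near-origin hypothesis of \cite[Thm. 3.1]{LedSoj} (alternatively, the restarted configuration is of the admissible type for the weighted obstacle-problem characterization developed in this paper), so all physical solutions of the restarted problem coincide on some interval $[0,\vep]$. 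Translating back, all physical solutions of \eqref{supercooled eqs} coincide on $[0,T^{*}+\vep]$, contradicting the definition of $T^{*}$. Hence $T^{*}=\infty$, and there is at most one physical solution.

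I expect the main obstacle to lie in this last step: checking that the restarted data genuinely satisfies the hypotheses of a short-time uniqueness statement, and in particular handling the borderline case in which the boundary value of $u$ equals the critical value $\alpha^{-1}$ at a point of continuity of $\Lambda$. This is where the free-boundary regularity, the classification of Theorem \ref{thm: classification}, and — crucially — the local finiteness of the jump set from Theorem \ref{thm: jumps}, which rules out the restarted configuration instantaneously producing an accumulating sequence of jumps, all come into play. The reason the theorem is stated with $t_{0}>0$ rather than as a fully local statement is exactly this dichotomy: after any positive time a physical solution has already regularized near its frontier and can carry no supercritical mass adjacent to it, whereas the original density $u_{0}$ may be arbitrarily supercritical near $x=0$ and therefore lies outside the scope of such short-time uniqueness results; restarting at $T^{*}\ge t_{0}>0$ is precisely what places us in the favorable regime.
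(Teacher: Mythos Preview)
Your continuation argument is structurally sound up to the restart, but it diverges from the paper's proof and leaves a genuine gap at exactly the point you flag. The paper does not iterate a short-time uniqueness result at all. Instead it proves directly (Proposition~\ref{prop: monotonicity changes}) that for any physical solution and any $t>0$, the density $u(\cdot,t^-)$ changes monotonicity at most finitely often on compact subsets of $[\Lambda_{t^-},\infty)$; this comes from the non-degeneracy estimate (Proposition~\ref{prop: u nondegeneracy}), which forces $u_x(\Lambda_{t_1}^+,t_1)>0$ at almost every time level, combined with interior analyticity and the forward propagation of monotonicity from \cite[Lem.~4.1]{delarue}. Once this is known, the problem restarted at $t_0$ with data $u(\cdot,t_0^-)$ falls squarely under the hypotheses of \cite[Thm.~1.1]{delarue}, which delivers \emph{global} uniqueness on $[t_0,\infty)$ in a single stroke---no supremum $T^*$, no iteration.

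By contrast, your route through \cite[Thm.~3.1]{LedSoj} genuinely stalls in the borderline case. If $T^*$ is a non-jump singular point, so that $u(\Lambda_{T^*},T^{*-})=\alpha^{-1}$, the restarted density satisfies $\nu(0^+)=(\alpha')^{-1}$ exactly, and Ledger--S\o jmark's pointwise polynomial separation $\nu(y)\le(\alpha')^{-1}-cy^n$ is not supplied by anything you invoke: the physical jump condition at $T^*$ only yields $\int_0^{h}\nu\le(\alpha')^{-1}h$ along a sequence $h_n\downarrow 0$, not a pointwise gap, and Theorems~\ref{thm: jumps} and~\ref{thm: classification} (which in any case are proved \emph{after} Theorem~\ref{thm: local to global uniqueness} in the paper's logical order) tell you such points are countable and that $s'$ vanishes there, but neither converts into the density inequality you need. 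The local-monotonicity fact of Proposition~\ref{prop: monotonicity changes} is precisely the missing structural input; and once you have it, \cite{delarue} applies globally, so the continuation machinery becomes superfluous.
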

We obtain this result by showing that physical solutions automatically become locally monotone at any positive time $t_0>0$ (see Proposition \ref{prop: monotonicity changes}), even if $u_0$ fails to satisfy this assumption. Therefore, uniqueness on $[t_0,\infty)$ follows from the existing well-posedness theory of \cite[Thm. 1.1]{delarue}. We now present two consequences of Theorem  \ref{thm: local to global uniqueness}. The first of these is a uniqueness result under the assumption that, near $x=0$, $u_0$ does not oscillate or become too flat if it approaches the critical value $\alpha^{-1}$.  Local uniqueness under this assumption was proved in \cite[Thm. 3.1]{LedSoj}, but the global uniqueness was left as an open question, which is answered by the following corollary.
\begin{cor} \label{cor: unique xn} Assume that $u_0$ is a probability density, and assume that there exist constants $\delta>0$, $c>0$, and $n\in \mathbb{N}$ such that 
\[u_0(x)\leq \alpha^{-1} -c x^n \quad \text{for almost every}  \; x\in (0,\delta).\]
Then \eqref{supercooled eqs} has at most one physical solution.
\end{cor}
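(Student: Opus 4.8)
The plan is to deduce this corollary directly from Theorem \ref{thm: local to global uniqueness} together with the local uniqueness result of \cite[Thm. 3.1]{LedSoj}. By Theorem \ref{thm: local to global uniqueness}, it suffices to produce a time $t_0>0$ such that any two physical solutions agree on $[0,t_0]$. The hypothesis $u_0(x)\le \alpha^{-1}-cx^n$ for a.e.\ $x\in(0,\delta)$ is precisely (or is readily seen to imply) the condition near $x=0$ under which \cite[Thm. 3.1]{LedSoj} guarantees local uniqueness, i.e.\ the existence of some $t_0>0$ on which all physical solutions coincide. So the first step is simply to invoke \cite[Thm. 3.1]{LedSoj} to obtain such a $t_0$, and the second step is to feed this into Theorem \ref{thm: local to global uniqueness} to conclude that there is at most one physical solution globally.

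The only point requiring care is checking that the polynomial decay assumption $u_0\le \alpha^{-1}-cx^n$ matches the hypothesis of \cite[Thm. 3.1]{LedSoj}. I would recall that the Ledger--S\"ojmark condition asks, roughly, that near the origin $u_0$ stays below $\alpha^{-1}$ in a quantitatively controlled way — either bounded strictly below $\alpha^{-1}$, or approaching it no faster than a fixed polynomial rate, with no oscillation into and out of the critical value. The bound $u_0(x)\le \alpha^{-1}-cx^n$ gives exactly such a one-sided polynomial gap from the critical value on $(0,\delta)$, and in particular prevents $u_0$ from touching $\alpha^{-1}$ on that interval except possibly at $x=0$. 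Thus the hypothesis of \cite[Thm. 3.1]{LedSoj} is verified, and local uniqueness holds on some $[0,t_0]$.

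The main (and essentially only) obstacle is therefore bookkeeping: making sure the quantitative form of our assumption is literally an instance of the assumption in \cite{LedSoj}, and citing it correctly; once that is done the corollary is immediate from Theorem \ref{thm: local to global uniqueness}. No new estimates are needed. If one wished to be self-contained, one could alternatively reprove the short-time uniqueness directly: on $[0,t_0]$ with $t_0$ small, the assumption $u_0\le \alpha^{-1}-cx^n$ keeps the relevant density below $\alpha^{-1}$ near the moving front for a short time, so no jump occurs and the classical Stefan theory (or a contraction-mapping/Gr\"onwall argument on $\Lambda$) yields uniqueness on $[0,t_0]$; then Theorem \ref{thm: local to global uniqueness} upgrades this to global uniqueness. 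Either way, the proof is two lines once the local result is in hand.
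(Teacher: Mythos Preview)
Your proposal is correct and matches the paper's approach exactly: the corollary is stated immediately after Theorem~\ref{thm: local to global uniqueness} precisely because it follows by combining the local uniqueness result \cite[Thm.~3.1]{LedSoj} (which applies under the stated polynomial-gap hypothesis) with Theorem~\ref{thm: local to global uniqueness}. The paper does not even write out a separate proof, since, as you note, it is immediate once those two ingredients are in hand.
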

The main novelty of Corollary \ref{cor: unique xn} is that it provides {\it global} uniqueness under only {\it local} assumptions on $u_0$ near $x=0$.

To understand this result, it is also important to note that oscillation around the value $\alpha^{-1}$ near $x=0$ is the critical behavior when it comes to uniqueness. Indeed, if $\text{ess}\limsup_{x\downarrow 0}u_0(x)<\alpha^{-1}$, then Corollary \ref{cor: unique xn} automatically gives global uniqueness. On the other hand, if $\text{ess}\liminf_{x\downarrow 0}u_0(x)>\alpha^{-1}$, then the solution jumps at $t=0$, and the physical jump condition \eqref{eq: physical defi} then forces $\text{ess}\liminf_{x\downarrow\Lambda_{0}}u_0(x)\leq\alpha^{-1}$.

As a second application, Theorem \ref{thm: local to global uniqueness} yields uniqueness for a family of oscillatory initial densities that arise as a critical case of the oscillatory regime of \cite[Thm. 1.1]{shkolnikov}. Indeed, it was shown in \cite[Prop. 1.6]{shkolnikov} that the solutions are locally unique, but the global uniqueness was again posed as an open question, addressed by our next result.
\begin{cor}Assume that $\alpha=1$, and assume that
\begin{equation}
u_0(x)=
\begin{cases}
\alpha_1,& x\in\bigcup_{n\ge 1}[a_{2n},a_{2n-1}),\\
\alpha_2,& x\in\bigcup_{n\ge 1}[a_{2n+1},a_{2n}),
\end{cases}
\end{equation}
where $0<\alpha_1<1<\alpha_2$, $a_{2n-1}=r^{n-1}a_1$, $a_{2n}=pr^{n-1}a_1$, and $r=pq$, $p,q\in(0,1)$. Then, for any $\alpha_2>1$ sufficiently close to 1, the physical solution $\Lambda$ of \eqref{supercooled eqs} is unique.  
\end{cor}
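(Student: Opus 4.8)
The plan is to deduce this from Theorem \ref{thm: local to global uniqueness}, using the \emph{local} uniqueness that is already available in the literature for this family of self-similar oscillatory data. Concretely, I would verify that the hypothesis of Theorem \ref{thm: local to global uniqueness} is met here: namely, that for $\alpha_2>1$ sufficiently close to $1$, any two physical solutions of \eqref{supercooled eqs} with the given $u_0$ coincide on a nontrivial interval $[0,t_0]$. This is precisely the content of \cite[Prop. 1.6]{shkolnikov}, so once it is quoted in the correct form the corollary is immediate — existence of a physical solution being guaranteed by \cite[Thm. 6.5]{CRF}.

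Before applying Theorem \ref{thm: local to global uniqueness}, I would record a harmless normalization. Summing the contributions of the two families of intervals, the total mass of the initial datum in the statement equals
\[
\sum_{n\ge 1} r^{n-1}a_1\bigl[\alpha_1(1-p)+\alpha_2 p(1-q)\bigr]
=\frac{a_1\bigl[\alpha_1(1-p)+\alpha_2 p(1-q)\bigr]}{1-r},
\]
which is finite and positive since $r=pq\in(0,1)$; choosing $a_1=(1-r)\bigl[\alpha_1(1-p)+\alpha_2 p(1-q)\bigr]^{-1}$ makes this equal to $1$, so we may assume $u_0$ is a probability density, as demanded by our standing hypotheses. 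This rescaling of $a_1$ preserves the self-similar structure $a_{2n-1}=r^{n-1}a_1$, $a_{2n}=pr^{n-1}a_1$, hence does not affect the applicability of \cite[Prop. 1.6]{shkolnikov}. I would also note that this $u_0$ falls outside Corollary \ref{cor: unique xn} and outside the global oscillatory regime of \cite[Thm. 1.1]{shkolnikov}, because $u_0$ exceeds the critical value $\alpha^{-1}=1$ on the intervals where it equals $\alpha_2$; this is exactly why a dedicated local-uniqueness input is needed and why $\alpha_2$ must be taken close to $1$.

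With the normalization in place, \cite[Prop. 1.6]{shkolnikov} supplies, for every $\alpha_2>1$ sufficiently close to $1$, a time $t_0=t_0(\alpha_1,\alpha_2,p,q)>0$ on which all physical solutions of \eqref{supercooled eqs} agree; Theorem \ref{thm: local to global uniqueness} then gives at most one physical solution, as claimed. The one point I expect to require care — and the only real obstacle, since the substance is carried by Theorem \ref{thm: local to global uniqueness} and by \cite{shkolnikov} — is the bookkeeping of matching the statement of \cite[Prop. 1.6]{shkolnikov} to the hypothesis of Theorem \ref{thm: local to global uniqueness}: one must confirm that the uniqueness obtained there is genuine uniqueness of physical solutions on a time interval $[0,t_0]$ (not, say, uniqueness within a restricted self-similar class), and that the smallness condition on $\alpha_2$ quoted in the corollary is exactly the one inherited from \cite{shkolnikov}. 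Should that comparison not be entirely immediate from the cited statement, the fallback is to re-derive the short-time agreement directly from the behavior of $u_0$ near the origin — where $\alpha_1<1$ forces two physical solutions to be comparable for small $t$, in the spirit of \cite[Thm. 3.1]{LedSoj} — after which the remainder of the argument is unchanged.
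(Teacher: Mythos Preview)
Your proposal is correct and follows exactly the paper's approach: the corollary is presented there as an immediate consequence of Theorem~\ref{thm: local to global uniqueness} together with the local uniqueness result \cite[Prop.~1.6]{shkolnikov}, with no further argument given. Your added remarks on the normalization of $a_1$ and on why neither Corollary~\ref{cor: unique xn} nor \cite[Thm.~1.1]{shkolnikov} applies are correct and useful context, but not part of the paper's own (essentially one-line) justification.
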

We now describe our methodology, which begins, in Sections \ref{sec:estimates} and \ref{sec: estimates physical}, by analyzing a partial differential equation solved by any probabilistic solution, namely
\begin{equation}\label{u eq intro} u_t-\frac12 u_{xx}=(\nu(x)\chi_{u>0})_t,\end{equation}
where $\nu$ is the probability density of the stopped particles, so that $B_{\tau} \sim \nu(x)dx.$ If we define $u(\cdot,0-):=u_0$, then $\nu$ can be described explicitly as
\begin{equation} \label{nu defi}
    \nu(x)=\begin{cases} 0 &\text{if} \;x\notin(0,\alpha),\\ 
    u(x,t-) & \text{if} \;\;\Lambda_{t-}<x<\Lambda_t \; \text{for some }\; t\in [0,\infty),\\
    \alpha^{-1} & \text{otherwise.}      
    \end{cases}
\end{equation}
Despite the highly irregular source term, and the inevitable discontinuities of $u$ across the boundary, one can obtain several useful inequalities. Crucially, we obtain the second-order estimates
\begin{equation} \label{2 order intro}
u_t(x,t)\leq C_R, \quad u_{xx}(x,t)\chi_{x>\Lambda_t} \leq C_R \quad  (x,t)\in (-\infty,R) \times (R^{-1},\infty),  
\end{equation}
valid for any probabilistic solution (see Proposition \ref{prop: semiconvexity} and Lemma \ref{lem: uxx ux bd}), as well as the linear nondegeneracy estimate
\begin{equation} \label{nondegeneracy intro}
u(x,t)\geq c_R (x-\Lambda_t)^+, \quad (x,t)\in (-\infty,R) \times (R^{-1},R), 
\end{equation}
valid for any physical solution, any $(x_0,t_0)\in \R \times (0,\infty)$, and a small constant $c_R>0$ (see Proposition \ref{prop: u nondegeneracy}). 

Estimates \eqref{2 order intro} and \eqref{nondegeneracy intro} are key in our analysis. We remark, for instance, that letting $x \downarrow \Lambda_{t}$ in \eqref{nondegeneracy intro}
formally yields a positive lower bound on the velocity $\dot \Lambda_t$, which is equivalent to a Lipschitz estimate on the freezing time $s=\Lambda^{-1}$. On the other hand, the one-sided bounds of \eqref{2 order intro} allow us to interpret quantities such as $u(\Lambda_{t},t-)$ or $u_x(\Lambda_t,t)$ in the pointwise sense, and, combined with \eqref{nondegeneracy intro}, make it possible to prove Theorem \ref{thm: local to global uniqueness} by ruling out oscillatory behavior for $u$ near the boundary.

The time derivative estimate of \eqref{2 order intro} is inspired by a similar, new bound for the classical Stefan problem, which was recently obtained by A. Figalli, X. Ros-Oton, and J. Serra in \cite[Prop. 3.4]{figalli}. We note that our derivation of this bound does not rely on the one-dimensional setting, and does not need $u$ to be continuous.  In the context of the supercooled Stefan problem, the weak formulation of \eqref{u eq intro}, for general target measures $\nu$, was first developed in \cite{KK,CKK} to obtain existence of weak solutions in arbitrary dimensions (see also \cite{GhoKimPal}).

In Section \ref{sec: jumps}, we show that jumps cannot accumulate, with the first ingredient being a backward propagation of oscillation property. That is, we prove that, given two time levels $t_*>t_0>0$, any sign changes of $u_x$ at time $t_*$ can be transported backward along continuous curves to corresponding sign changes of $u_x(\cdot,t_0)$ (see Lemma \ref{lem:global curves}). This part of our proof is a refined version of the construction in \cite[Lem.\ 4.1]{delarue}. The main improvement is that we ensure that our curves can be defined globally on $[t_0,t_*]$ without ever hitting the boundary or escaping to $+\infty$. This allows us to quantify the oscillations at $t_*$ purely in terms of the oscillations at $t_0$, and to use the globally defined curves as barriers for a recursive argument. 

The second ingredient is the fact that whenever $\Lambda$ jumps at some time $t_*>t_0$, the physicality condition \eqref{eq: physical defi} forces one additional oscillation of $u$ near the boundary at time $t_*$ (see Lemma \ref{lem: jump implies oscillation}). These oscillations can then be propagated back to the fixed time $t_0$, which allows us to estimate the number of future jumps in terms of the number of past oscillations at $t_0$. Since we showed that the latter is locally finite, this allows us to rule out the accumulation of jumps and prove Theorem \ref{thm: jumps}.

In Section \ref{sec:regularity}, we prove the main regularity results for the solution and its free boundary. For this purpose, we utilize the potential
\begin{equation} \label{intro duvaut}
    w(x,t)=\int_{t}^{\infty}u(x,s)ds,
\end{equation}
which satisfies the parabolic obstacle problem
\begin{equation} \label{intro obstacle} w_t-\frac{1}{2}w_{xx}=-\nu(x)\chi_{w>0}.    
\end{equation}
Taking advantage of the fact that jump times do not accumulate, one sees that, near any free boundary point that lies outside of a jump, $\nu\equiv \alpha^{-1}$. In that case, \eqref{intro obstacle} becomes the standard obstacle problem with constant source, which allows us to perform a classical blow-up analysis on these non-jump points (see \cite{CafObstacle,CafPetSha}), to achieve a precise understanding of the singular points, and to prove Theorem \ref{thm: classification}. Finally, we leverage the information obtained from the blow-up analysis, as well as the second-order estimates \eqref{2 order intro}, to show that the freezing time is of class $C^1$, proving Theorem \ref{thm: C1 regu}.
 
We note that the transformation \eqref{intro duvaut} was first used in \cite{KK}, where the authors exploited the properties of the potential $w$, in the higher-dimensional supercooled Stefan problem, to establish, among other things, the lower-semicontinuity of the freezing time $s$ (for the classical Stefan problem, this type of transformation dates back to \cite{duvaut}). Furthermore, the idea of obtaining higher regularity results for the free boundary by leveraging the regularity theory of the parabolic obstacle problem for $w$ was first suggested in \cite[Sec. 4.3]{KK}. Since this paper was first posted, the local free boundary regularity theory for the supercooled Stefan problem in $\R^d$ was developed in \cite{EKM}, where the $C^1$ regularity of $s$ and the structure of the singular set are established in arbitrary dimensions. The companion paper \cite{CKM} studies, also in arbitrary dimensions, \emph{maximal solutions} \cite{CKK,KK}, a selection principle that delays solidification as much as possible, in analogy with the minimal jump condition \eqref{eq: physical defi}, and establishes further regularity properties and stability results for this class.
\section{Analysis of probabilistic solutions and their associated obstacle problem} \label{sec:estimates}
Throughout the paper, we will tacitly assume that $u_0$ is a probability density on $[0,\infty)$. The results of this section are valid for all probabilistic solutions, as they do not make use of the physical jump condition \eqref{eq: physical defi}.
\subsection{The equation satisfied by $u$}
We begin with a basic estimate on $u$ and its time integral, which, in particular, shows that $u$ is well-defined as a density.
\begin{lem} \label{lem: u bound} Let $\Lambda$ be a probabilistic solution to \eqref{supercooled eqs}, and let $u$ be given by \eqref{u defi}. Then $u$ is a classical solution to
\[u_t-\frac{1}{2}u_{xx}=0, \quad(x,t)\in \Omega:=\{(x,t)\in \R\times (0,\infty): x>\Lambda_t\}=\{u>0\},\]
 Moreover, there exists a universal constant $C>0$ such that
    \begin{equation} \label{u bound}u(x,t) \leq Ct^{-1/2},  \quad (x,t)\in \R \times (0,\infty),        
    \end{equation}
    and
    \begin{equation} \label{w bound}
        \int_{0}^{\infty}u(x,s)ds \leq 2x,\quad x\in \R.
    \end{equation}
\end{lem}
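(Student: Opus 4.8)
The plan is to exploit the probabilistic representation of $u$ from~\eqref{u defi} directly, rather than working with the PDE. The fact that $u$ solves the heat equation on $\Omega=\{u>0\}$ is essentially immediate: on the open region $\{x>\Lambda_t\}$, the sub-probability measure $\P(B_t\in\cdot,\,t<\tau)$ has a density that, by the strong Markov property applied at $\tau$ (or equivalently by reflecting/killing at the moving barrier), evolves by the free heat equation; since $\Lambda$ is right-continuous and nondecreasing, $\Omega$ is open and away from the barrier the killing has no effect, so $u$ is a classical (indeed smooth, by parabolic interior regularity) solution of $u_t-\tfrac12 u_{xx}=0$ there. I would state this and move on, as the substance of the lemma is the two quantitative bounds.

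For the pointwise bound~\eqref{u bound}, the key observation is the domination $u(x,t)\le p_t * u_0\,(x)$ in the sense of densities, where $p_t$ is the heat kernel: killing at $\tau$ can only decrease mass, so $\P(B_t\in[a,b],\,t<\tau)\le\P(B_t\in[a,b])=\int_a^b (p_t*u_0)(x)\,dx$ for all $a<b$, hence $u(x,t)\le (p_t*u_0)(x)$ for a.e.\ $x$, and then everywhere by continuity of $u$ on $\Omega$ (and $u=0$ off $\Omega$). Now $(p_t * u_0)(x)=\int p_t(x-y)u_0(y)\,dy\le \|p_t\|_{L^\infty}\int u_0 = (2\pi t)^{-1/2}$ since $u_0$ is a probability density; this gives~\eqref{u bound} with $C=(2\pi)^{-1/2}$.

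For the time-integral bound~\eqref{w bound}, I would write $\int_0^\infty u(x,s)\,ds$ as an occupation density and compare with Brownian local time. Concretely, for $a<b$,
\[
\int_0^\infty \P(B_s\in[a,b],\,s<\tau)\,ds \le \int_0^\infty \P(B_s\in[a,b],\ B_r>0 \text{ for all } r\le s)\,ds,
\]
using $\tau=\inf\{s: B_s\le\Lambda_s\}\ge \inf\{s:B_s\le 0\}$ because $\Lambda\ge 0$; wait—this needs care since $B_0$ has law $u_0$ supported on $[0,\infty)$, so $\tau$ is at least the first hitting time of $0$. By the occupation-times formula, $\int_0^\infty \chi_{[a,b]}(B_s)\,\chi_{\{s<\tau_0\}}\,ds = \int_a^b L^z_{\tau_0}\,dz$ where $L^z$ is the local time at level $z$ and $\tau_0=\inf\{s:B_s=0\}$; taking expectations and dividing by $b-a$, then sending $b\downarrow a$, gives $\int_0^\infty u(x,s)\,ds \le \E[L^x_{\tau_0}]$ for a.e.\ $x>0$ (and the left side is $0$ for $x\le 0$). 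It remains to bound $\E[L^x_{\tau_0}]$ by $2x$: starting the Brownian motion from its initial law, $\E[L^x_{\tau_0}]=\int_0^\infty u_0(y)\,\E_y[L^x_{\tau_0}]\,dy$, and for a single starting point $y>0$ one computes $\E_y[L^x_{\tau_0}] = 2(x\wedge y)\le 2x$ via the standard Green's function of Brownian motion killed at $0$ on $(0,\infty)$, which is $G(y,x)=2(x\wedge y)$. Integrating against the probability density $u_0$ preserves the bound $\le 2x$, giving~\eqref{w bound}.

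The main obstacle I anticipate is making the occupation-density/local-time comparison fully rigorous at the level of densities rather than integrated against test intervals — i.e.\ justifying that $u(x,t)$ (defined via~\eqref{u defi} as a density) coincides with the expected occupation density and that the inequality passes to the pointwise limit $b\downarrow a$. This is routine given that $u(\cdot,t)$ is continuous on $\Omega$ and the killed-Brownian occupation measure is absolutely continuous with continuous density, so one can invoke the Lebesgue differentiation theorem together with continuity to upgrade the a.e.\ inequality to an everywhere inequality. A cleaner alternative that avoids local time entirely: define $w(x,t)=\int_t^\infty u(x,s)\,ds$; then (as in~\eqref{intro obstacle}, anticipated later) $w$ satisfies $w_t-\tfrac12 w_{xx}=-\nu(x)\chi_{w>0}\ge -\alpha^{-1}\chi_{(0,\alpha)}$ in a suitable weak sense with $w\ge 0$, $w(x,0^+)\le \E[L^x_{\tau_0}]$, and a direct comparison of $w(\cdot,0^+)$ with the affine function $x\mapsto 2x$ (which is a supersolution of the relevant obstacle problem on $\{x>0\}$ since $-\tfrac12(2x)''=0\ge -\alpha^{-1}$ and $2x\ge 0$) yields~\eqref{w bound}; but since~\eqref{intro obstacle} is developed only later, I would present the self-contained local-time argument here.
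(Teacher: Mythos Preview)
Your proposal is correct and follows essentially the same route as the paper: dominate $u$ by the density of Brownian motion killed at $0$ (using $\Lambda\ge 0$), and identify the time integral with the Green function $2(x\wedge y)$ of $-\tfrac12\partial_{xx}$ on the half-line. The paper carries this out by writing the killed-BM density explicitly as $v(x,t)=\int_0^\infty (G(x-y,t)-G(x+y,t))u_0(y)\,dy$ and then integrating the kernel in $t$ directly, which sidesteps the occupation-time/local-time machinery you worry about; conversely, your bound $u\le p_t*u_0\le(2\pi t)^{-1/2}$ via the \emph{free} heat kernel is marginally cleaner than the paper's use of $v$ for \eqref{u bound}. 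For the heat equation in $\Omega$ and the identity $\Omega=\{u>0\}$, the paper gives a short explicit argument via It\^o's formula against $\varphi\in C_c^\infty(\Omega)$ followed by the strong maximum principle, which you should include rather than leave as a sketch.
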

\begin{proof}  Letting $\tau_0=\inf\{t:B_t\leq 0\}$, we note that $\tau\leq \tau_0$, because $\Lambda\geq0$. Hence, for any $[a,b]\subset [0,\infty)$, 
\begin{equation}
   \P(B_t\in [a,b], t< \tau )\leq \P(B_t \in [a,b], t<\tau_0)=\int_{a}^bv(x,t)dx,
\end{equation}
where $v$ is the solution to
\[v_{t}-\frac12v_{xx}=0, \quad v(x,0)=u_0(x), \quad v(0,t)=0, \quad (x,t)\in (0,\infty) \times (0,\infty).\]
This shows that $u$ is well-defined, and
\begin{equation} \label{uineqdokap}
    u(x,t)\leq v(x,t)= \int_{0}^{\infty}(G(x-y,t)-G(x+y,t))u_0(y)dy, \quad x>0, \quad  t>0,
\end{equation}
where
\begin{equation}
    G(x,t)=(2\pi t)^{-1/2}\exp (-x^2/{2t}).
\end{equation}
In particular, this readily implies \eqref{u bound}. Furthermore, by a direct computation, \eqref{uineqdokap} yields
\begin{multline}
\int_0^{\infty}u(x,t)dt \leq \int_0^{\infty}\int_{0}^{\infty}(G(x-y,t)-G(x+y,t))u_0(y)dy dt=2\int_{0}^{\infty} \min(x,y)u_0(y)dy\leq 2x.\end{multline}   
which shows \eqref{w bound}. Note that, since $\Lambda$ is nondecreasing and right continuous, it is upper semicontinuous. Therefore, the set $\Omega$ is open. Letting $\varphi \in C^{\infty}_c(\Omega)$, and noting that $\varphi(B_{\tau},\tau)=\varphi(B_0,0) \equiv 0$, we have, by It\^o's formula and the compact support of $\varphi$ (see \cite[Ch. 3, Cor. 3.6; Ch. 4, Thm. 3.3]{RevYor}),
\begin{multline}\label{qwodkr8}
    0=\E(\varphi(B_{\tau},\tau))= \E(\varphi(B_0,0))+\E\left( \int_{0}^\tau \left(\frac{1}{2}\varphi_{xx}(B_s,s)+\varphi_t(B_s,s)\right)ds\right)
    \\=\E\left( \int_{0}^{\infty} \left(\frac{1}{2}\varphi_{xx}(B_s,s)+\varphi_t(B_s,s)\right)\chi_{s<\tau}ds\right)= \int_{0}^{\infty}\E\left(  \left(\frac{1}{2}\varphi_{xx}(B_s,s)+\varphi_t(B_s,s)\right),s<\tau\right)ds\\=\int_{0}^{\infty}\int_{\R} \left( \frac{1}{2}\varphi_{xx}+\varphi_t\right)u(x,s)dxds,\end{multline}
where \eqref{u defi} was used in the last equality. This shows that $u$ solves the heat equation in the distributional (and therefore classical) sense in $\Omega$.  The fact that $u$ vanishes outside of $\Omega$ follows directly from \eqref{u defi} and the definition of $\tau$. Hence, by the strong maximum principle, $\Omega=\{u>0\}$.
\end{proof}
The next lemma concerns the regularity of the past limit $u(x,t_0-)$, for $x>\Lambda_{t_0-}$, which will be of particular importance when $\Lambda_{t_0}>\Lambda_{t_0-}$. It also identifies $u(x,t_0-)dx$ as a concrete probability distribution. The limit at $x=\Lambda_{t_0-}$ is more delicate, and will be treated later (see Lemma \ref{lem: lim past regu}).
\begin{lem} \label{lem: smooth ext} Let $\Lambda$ be a probabilistic solution to \eqref{supercooled eqs}, let $u$ be given by \eqref{u defi}, and let $t_0>0$. Then for $x\in (\Lambda_{t_0-},\infty)$, the limit
\[ u(x,t_0-):=\lim_{t\uparrow t_0}u(x,t)\]
exists. The function $u(\cdot, t_0-)$ is analytic in space, and defines a strictly positive, smooth extension of $u$ in $(\Lambda_{t_0-},\infty)\times (0,t_0]$. Moreover, we have
\begin{equation} \label{u(t-) formula}
    \P(B_{t_0}\in [a,b], t_0\leq \tau) =\int_{a}^bu(x,t_0-)dx,  \quad \Lambda_{t_0-} \leq a <b.
\end{equation}
\end{lem}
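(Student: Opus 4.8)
The plan is to establish the existence and regularity of the one-sided limit $u(\cdot,t_0^-)$ by combining the interior parabolic regularity of $u$ in $\Omega$ (already available from Lemma \ref{lem: u bound}) with the fact that $\Lambda$ is left-continuous at no cost — more precisely, with the fact that $\Lambda_{t^-}$ is the relevant lower envelope near $t_0$. First I would fix $x_1 > \Lambda_{t_0^-}$ and observe that, since $\Lambda$ is non-decreasing with $\Lambda_{t_0^-} = \sup_{t<t_0}\Lambda_t$, there is $\vep>0$ and a spatial buffer so that the closed rectangle $[x_1-\delta,\infty)\times[t_0-\vep,t_0)$ sits inside $\Omega = \{u>0\}$ for suitable $\delta>0$; on any such rectangle $u$ solves the heat equation and is bounded by $C(t_0-\vep)^{-1/2}$ via \eqref{u bound}. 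Standard interior estimates for the heat equation then give uniform bounds on all space-time derivatives of $u$ on a slightly smaller rectangle, up to and including the time slice $t=t_0$ in the interior $x>x_1$. Hence $u$ extends to a smooth (indeed, by the analytic hypoellipticity of $\partial_t - \tfrac12\partial_{xx}$ in the space variable, spatially analytic) function on $(\Lambda_{t_0^-},\infty)\times(0,t_0]$, and in particular $u(x,t_0^-) := \lim_{t\uparrow t_0}u(x,t)$ exists for every $x>\Lambda_{t_0^-}$. Strict positivity on the closed time interval follows from the strong maximum principle, exactly as in the last line of the proof of Lemma \ref{lem: u bound}: $u$ cannot vanish at an interior point of the connected open set on which it is a nonnegative caloric function without vanishing identically, which is impossible since $\int u(x,t)\,dx \to 1$.

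It remains to identify $u(\cdot,t_0^-)\,dx$ with the law of $B_{t_0}$ restricted to $\{t_0\le\tau\}$, i.e.\ to prove \eqref{u(t-) formula}. Here I would argue by monotone/dominated convergence on the probabilistic side and Fatou- or Scheff\'e-type reasoning on the analytic side. For $t<t_0$ we have $\{t<\tau\}\supseteq\{t_0\le\tau\}$, and as $t\uparrow t_0$, $\chi_{\{t<\tau\}}\downarrow \chi_{\{t_0\le\tau\}}$ on the event that $\tau$ is attained — one must be slightly careful about whether $\tau=t_0$ is included, but since $\tau=\inf\{t: B_t\le\Lambda_t\}$ and $B$ is continuous while $\Lambda$ is right-continuous, $\{t<\tau \ \forall t<t_0\} = \{t_0\le\tau\}$ up to a null set, so $\P(B_{t_0}\in[a,b],\,t<\tau)\to\P(B_{t_0}\in[a,b],\,t_0\le\tau)$ by dominated convergence using continuity of $B$. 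On the other side, for fixed $[a,b]$ with $a>\Lambda_{t_0^-}$, the integrand $u(x,t)$ converges pointwise to $u(x,t_0^-)$ and is dominated on $[a,b]$ by the uniform bound from the interior estimates, so $\int_a^b u(x,t)\,dx \to \int_a^b u(x,t_0^-)\,dx$; comparing the two limits with \eqref{u defi} gives \eqref{u(t-) formula} for $a>\Lambda_{t_0^-}$, and the endpoint case $a=\Lambda_{t_0^-}$ follows by letting $a\downarrow\Lambda_{t_0^-}$ using monotone convergence on both sides (noting the left side is continuous in $a$ since $B_{t_0}$ has a density).

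The main obstacle I anticipate is the geometry near the left edge: one must verify that the limiting boundary position governing the extension is genuinely $\Lambda_{t_0^-}$ and not $\Lambda_{t_0}$, i.e.\ that for every $x_1>\Lambda_{t_0^-}$ there is a whole backward-in-time neighborhood of $t_0$ on which $\Lambda_t < x_1$ with room to spare. This is where $\Lambda_{t_0^-}=\lim_{t\uparrow t_0}\Lambda_t$ and the monotonicity of $\Lambda$ are used decisively: for $t<t_0$ close enough, $\Lambda_t$ is within $\tfrac12(x_1-\Lambda_{t_0^-})$ of $\Lambda_{t_0^-}$, so the rectangle construction goes through. A secondary subtlety is the treatment of $\tau=t_0$ in the probabilistic identity, which I would handle by the continuity-of-$B$ argument sketched above (and which is consistent with the convention $u(\cdot,0^-):=u_0$ declared before \eqref{nu defi}). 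Everything else is a routine application of parabolic interior regularity and the strong maximum principle.
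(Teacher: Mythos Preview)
Your proposal is correct and follows essentially the same route as the paper: interior Schauder estimates on backward cylinders $[\Lambda_{t_0^-}+\delta,\Lambda_{t_0^-}+\delta^{-1}]\times(t_0/2,t_0)$ give the smooth extension and spatial analyticity, the strong maximum principle gives strict positivity, and \eqref{u(t-) formula} is obtained by passing $t\uparrow t_0$ in \eqref{u defi} using continuity of $B$ together with monotone convergence on the events $\{t<\tau\}\downarrow\{t_0\le\tau\}$. The only cosmetic difference is that the paper writes the probabilistic limit as a clean three-step chain $\P(B_t\in[a,b],\,t<\tau)\to\P(B_{t_0}\in[a,b],\,t<\tau)\to\P(B_{t_0}\in[a,b],\,t_0\le\tau)$, whereas your sketch folds the $B_t\to B_{t_0}$ step into the phrase ``using continuity of $B$'' without separating it out explicitly.
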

\begin{proof} By Lemma \ref{lem: u bound}, for any $\delta>0$, the function $u$ is bounded and satisfies the heat equation in the cylinder $[\Lambda_{t_0-}+\delta,\Lambda_{t_0-}+\delta^{-1}] \times (t_0/2,t_0)$. Hence, by the interior Schauder estimates (see \cite[Thm. 4.9]{LiebermanBook}), for any $k>1$, $u$ is uniformly bounded in $C^{k}([\Lambda_{t_0-}+2\delta,\Lambda_{t_0-}+(2\delta)^{-1}] \times (t_0/4,t_0))$, which implies the smooth extension claim. The fact that $u(\cdot,t_0-)$ is analytic is then a standard fact about classical solutions to the heat equation (see \cite[Ch. VI, Thm. 1]{mikhailov}). The strict positivity follows from the strong maximum principle for the heat equation. Finally, letting $t\uparrow t_0$ in \eqref{u defi}, and using the local boundedness of $u$, we obtain
\[\int_{a}^bu(x,t_0-)dx=\lim_{t\uparrow t_0} \P(B_t\in [a,b], t< \tau)=\lim_{t\uparrow t_0} \P(B_{t_0}\in [a,b], t<\tau)=\P(B_{t_0}\in [a,b], t_0 \leq \tau),\]
where the continuity (in probability) of Brownian motion was used in the second equality, and the monotone convergence theorem was used in the third equality.
\end{proof}
Recalling that the definition of $\nu$ was given in \eqref{nu defi}, we now make the basic observation that $\nu$ is a probability density, and the freezing time $s$ is continuous.
\begin{lem}\label{lem: lambda strict incr}Let $\Lambda$ be a probabilistic solution to \eqref{supercooled eqs}, let $s$ be given by \eqref{s defi}, and let $\nu$ be given by \eqref{nu defi}. Then $\Lambda$ is strictly increasing, $s$ is continuous, and
\begin{equation}
    \int_{\R}\nu(x)dx=1.
\end{equation}
\end{lem}
\begin{proof} We have, for $h>0$, recalling \eqref{u defi}, the monotonicity of $\Lambda$, and the Markov property of Brownian motion,
 \begin{multline}
     \alpha^{-1}(\Lambda_{t+h}-\Lambda_t)=\P(t<\tau \leq t+h)=\P\left(\inf_{s\in(0,h]}(B_{t+s}-\Lambda_{t+s})\leq 0, t<\tau \right)\\
     \geq\P\left(\inf_{s\in(0,h]}B_{t+s}\leq \Lambda_{t}, t<\tau\right)=\int_{\Lambda_t}^{\infty} \P \left(\inf_{s\in(0,h]}B_s\leq \Lambda_t \Bigr \vert B_{0}=x\right) u(x,t)dx>0,
 \end{multline}   
 where in the last inequality we used the strict positivity of $u$ in $\{\Lambda_t<x\}$ and the standard reflection principle for Brownian motion (see \cite[Exm. 8.4.1]{durrett}). The continuity of $s(x)=\inf\{t: x<\Lambda_t\}$ then follows from the strict monotonicity of $\Lambda$. 

Now, letting $A=\{t\in [0,\infty): \Lambda_{t}=\Lambda_{t-}\},$  observe that $s^{-1}(A)=\{x \in [0,\alpha): \Lambda_{s(x)-}=\Lambda_{s(x)}\}$, and $\Lambda_{s(x)}=x$ for all $x\in s^{-1}(A)$. Therefore, by change of variables (see \cite[Prop. 4.9]{RevYor}), we have
  \begin{equation} \label{cov nu231}
    \int_{\Lambda_{s(x)}=\Lambda_{s(x)-}}\nu(x)dx=\alpha^{-1}\int_{\Lambda_{s(x)}=\Lambda_{s(x)-}}dx=\alpha^{-1}\int_{\Lambda_{t}=\Lambda_{t-}}d\Lambda_t.
    \end{equation}
On the other hand, noting that $\Lambda$ has countably many discontinuities, we obtain from \eqref{u(t-) formula} that    
 \begin{multline}\label{cov nu2312}
   \int_{\Lambda_{s(x)}>\Lambda_{s(x)-}}\nu(x)dx=\sum_{\Lambda_{t}>\Lambda_{t-}} \int_{\Lambda_{t-}}^{\Lambda_t}u(x,t-)dx=\sum_{\Lambda_{t}>\Lambda_{t-}} \P(B_t\in [\Lambda_{t-},\Lambda_t],\tau\geq t)=\sum_{\Lambda_{t}>\Lambda_{t-}} \P(\tau=t)\\=\sum_{\Lambda_{t}>\Lambda_{t-}}\alpha^{-1}(\Lambda_{t}-\Lambda_{t-})=\alpha^{-1}\int_{\Lambda_t>\Lambda_{t-}} d\Lambda_t.
\end{multline}   
Adding \eqref{cov nu231} and \eqref{cov nu2312}, we conclude that
\begin{equation}    \int_{\R}\nu(x)dx=\alpha^{-1}\int_{0}^{\infty}d\Lambda_t=1.
    \end{equation}
\end{proof}
 We now derive the equation solved by $u$ in the whole space.
\begin{prop} \label{prop: u eq} Let $\Lambda$ be a probabilistic solution to \eqref{supercooled eqs}, and let $u$ be given by \eqref{u defi}. The function $u$ satisfies, in the distributional sense,
    \begin{equation}\label{u equation}\begin{cases} u_t-\frac12 u_{xx}=(\nu(x)\chi_{u>0})_t & x\in \R \times (0,\infty), \\
    u(x,0)=u_0(x) \chi_{x>\Lambda_0} &x\in \R.
    \end{cases}
\end{equation}
Namely, for every $\varphi\in C_c^{\infty}(\R \times [0,\infty))$,
\begin{equation}\label{u equation dist}
     \int_{0}^{\infty}\int_{\R} \left( \varphi_t(u-\nu \chi_{u>0})+\frac{1}{2}\varphi_{xx}u\right)dxdt+\int_{\R}(u_0(x)-\nu(x))\varphi(x,0)dx=0.
\end{equation}
In particular, $u$ is a global subsolution to the heat equation in $\R \times (0,\infty)$, and $u \in L^2_{\operatorname{loc}}((0,\infty);H^1_{\operatorname{loc}}(\R))$.
\end{prop}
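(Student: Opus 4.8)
The plan is to start from the fact, already established in Lemma~\ref{lem: u bound}, that $u$ solves the heat equation classically in the open set $\Omega=\{u>0\}$, and to compute the distributional source term generated across the free boundary. For a test function $\varphi\in C_c^\infty(\R\times[0,\infty))$, the quantity $\int_0^\infty\int_\R(\varphi_t u+\tfrac12\varphi_{xx}u)\,dx\,dt+\int_\R u_0(x)\chi_{x>\Lambda_0}\varphi(x,0)\,dx$ measures, by definition, the distribution $u_t-\tfrac12 u_{xx}$ paired with $\varphi$ (with the initial condition built in). The key identity I want is that this equals $\int_0^\infty\int_\R \varphi_t\,\nu(x)\chi_{u>0}\,dx\,dt+\int_\R\nu(x)\chi_{\Lambda_0>0\text{-ish}}\varphi(x,0)\,dx$, i.e. exactly \eqref{u equation dist}. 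The cleanest route is probabilistic: since $u(x,t)\,dx$ is the sub-probability law of $B_t$ on $\{t<\tau\}$ and $\nu$ is the law of $B_\tau$, one has for every $t$ the mass-splitting identity $\mathcal L(B_t)=u(\cdot,t)\,dx+(\text{law of }B_\tau\text{ restricted to }\tau\le t)$, and the second piece should be identified with $\nu(x)\chi_{u(x,t)>0}\,dx$ plus a correction supported where $u(\cdot,t)=0$ that does not move in $t$. Concretely, $B_t\wedge$ absorbed process has law $\mathcal L(B_t)$, i.e. $G_t * (u_0\,dx)$ away from the absorbing barrier; apply It\^o/Dynkin to $\varphi(B_{t\wedge\tau},t\wedge\tau)$ and split the expectation into the $\{t<\tau\}$ part (which gives the $u$ terms, exactly as in \eqref{qwodkr8}) and the $\{\tau\le t\}$ part (which gives a boundary contribution at $\tau$). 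The $\{\tau\le t\}$ part, after writing $\varphi(B_\tau,\tau)=\varphi(B_\tau,t)-\int_\tau^t \varphi_t(B_\tau,r)\,dr$ and using $B_\tau\sim\nu$, produces precisely $\int\int \varphi_t\,\nu\,\chi_{\{r>s(x)\}}$ type terms; one then checks $\chi_{\{r>s(x)\}}$ agrees with $\chi_{u(x,r)>0}$ up to the measure-zero jump set, using \eqref{nu defi} and Lemma~\ref{lem: smooth ext}.

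In more detail, the steps I would carry out are: (1) Fix $\varphi$ and $T$ with $\operatorname{supp}\varphi\subset\R\times[0,T)$. Apply It\^o's formula to $\varphi(B_t,t)$ up to time $T$, exactly as in the proof of Lemma~\ref{lem: u bound}, but now \emph{without} stopping at $\tau$; one gets $\E\varphi(B_T,T)-\E\varphi(B_0,0)=\E\int_0^T(\tfrac12\varphi_{xx}+\varphi_t)(B_r,r)\,dr$, and since $\varphi(B_T,T)=0$ this reads $-\int_\R u_0\varphi(\cdot,0)=\int_0^\infty\int_\R(\tfrac12\varphi_{xx}+\varphi_t)\,\mu_r(dx)\,dr$ where $\mu_r=\mathcal L(B_r)$. (2) Decompose $\mu_r=u(\cdot,r)\,dx+\rho_r$, where $\rho_r$ is the law of $B_r$ on $\{\tau\le r\}$; substitute and isolate the $\rho_r$ contribution, which must therefore equal $-\int_\R(u_0-\nu)\varphi(\cdot,0)-\int_0^\infty\int_\R \varphi_t\,\nu\chi_{u>0}$ once we prove the claim, so it suffices to show $\int_0^\infty\int_\R(\tfrac12\varphi_{xx}+\varphi_t)\,\rho_r(dx)\,dr=\int_0^\infty\int_\R \varphi_t\,\nu(x)\chi_{u(x,r)>0}\,dx\,dr+\int_\R(\nu-u_0\chi_{x>\Lambda_0})\varphi(\cdot,0)\,dx$ — wait, more simply: since $B$ is absorbed in law by the strong Markov property, $\rho_r$ is a \emph{non-decreasing} family of measures with $\rho_r\uparrow\nu$ as $r\to\infty$ and $\partial_r\rho_r = \alpha^{-1}\,d\Lambda_r$ on the barrier; the $\varphi_{xx}$ integral against $\rho_r$ vanishes because $\rho_r$ is supported in $[0,\Lambda_r]$ where... (3) The honest computation: by Fubini and the fact that for $x$ fixed, $B$ is absorbed at $x$ at time $s(x)$ with density $\nu(x)$, we have $\rho_r(dx)=\nu(x)\chi_{\{s(x)\le r\}}\,dx$ for $r$ outside the countably many jump times, hence $\int_0^\infty\int_\R \varphi_t(x,r)\,\rho_r(dx)\,dr=\int_\R\nu(x)\int_{s(x)}^\infty\varphi_t(x,r)\,dr\,dx=-\int_\R\nu(x)\varphi(x,s(x))\,dx$, and the $\varphi_{xx}$ term gives $\int_\R\nu(x)\int_{s(x)}^\infty\tfrac12\varphi_{xx}(x,r)\,dr\,dx$; combining with step (1) and reorganizing, matching against $u(\cdot,r)\,dx$ solving the heat equation in $\Omega$, yields \eqref{u equation dist}. (4) Finally, once \eqref{u equation dist} holds, the subsolution property is immediate since $\nu\ge0$ so $(\nu\chi_{u>0})_t$ acts like a non-negative-in-time source in the appropriate weak sense — more precisely, pair against $\varphi\ge0$ with $\varphi_t\le0$; and the $L^2_{\mathrm{loc}}(H^1_{\mathrm{loc}})$ regularity follows from Caccioppoli/energy estimates for subsolutions of the heat equation with the RHS being a time-derivative of a bounded function (using \eqref{u bound} and $\nu\in L^\infty$), e.g. by testing against $\varphi^2 u$ after a time mollification.

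The main obstacle will be step (2)–(3): correctly identifying the distributional source $(\nu\chi_{u>0})_t$, i.e. showing that the absorbed-mass family $\rho_r$ has $r$-derivative equal to $\nu\,\partial_r\chi_{\{u(\cdot,r)>0\}}$ in the sense of distributions on $\R\times(0,\infty)$. This requires: (a) that $\{u(\cdot,r)>0\}=\{x>\Lambda_r\}$ (already in Lemma~\ref{lem: u bound}), so $\chi_{u>0}=\chi_{\{r<s(x)\}}$ up to the jump set where $\Lambda_{r^-}<x<\Lambda_r$; (b) on that jump set $\nu(x)=u(x,r^-)$ by \eqref{nu defi}, which is exactly what makes the mass released at a jump match $u(x,t^-)\,dx$ on $(\Lambda_{t^-},\Lambda_t)$ — this uses \eqref{u(t-) formula} from Lemma~\ref{lem: smooth ext}; and (c) handling the countable set of jump times carefully so the distributional derivative picks up the atoms $\alpha^{-1}d\Lambda_r$ correctly, which is precisely the content of the mass-conservation bookkeeping already done in \eqref{cov nu231}–\eqref{cov nu2312} in Lemma~\ref{lem: lambda strict incr}. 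So the proof is really an It\^o's-formula computation glued to the mass accounting of Lemma~\ref{lem: lambda strict incr}, and the delicate point is making the distributional statement on the full space $\R\times(0,\infty)$ rather than the pointwise-in-$r$ statement.
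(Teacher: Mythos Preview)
Your first paragraph outlines precisely the paper's approach: apply It\^o to the stopped process to obtain
\[
\E\varphi(B_\tau,\tau)=\int_{\R}u_0\varphi(\cdot,0)\,dx+\int_0^\infty\!\!\int_{\R}\bigl(\tfrac12\varphi_{xx}+\varphi_t\bigr)u\,dx\,dt,
\]
and then identify $\E\varphi(B_\tau,\tau)$ with $\int_0^\infty\!\int\varphi_t\nu\chi_{u>0}+\int\nu\varphi(\cdot,0)$ via the mass accounting of Lemma~\ref{lem: lambda strict incr}. The paper does this by splitting over continuity versus jump times of $\Lambda$; your remark that $s(B_\tau)=\tau$ (which follows from strict monotonicity of $\Lambda$) would let you write this more compactly as $\E\varphi(B_\tau,\tau)=\int\nu(x)\varphi(x,s(x))\,dx$ and avoid the explicit split.

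However, your detailed steps (1)--(3) contain a genuine inconsistency. In step~(1) you apply It\^o to the \emph{unstopped} process $(B_t,t)$, so $\mu_r=\mathcal{L}(B_r)$ is the free heat flow and $\rho_r(A)=\P(B_r\in A,\,\tau\le r)$ with $B_r$ still moving after $\tau$. But in steps~(2)--(3) you treat $\rho_r$ as if it were the law of the \emph{absorbed} particle: the claims ``$\rho_r$ non-decreasing, $\rho_r\uparrow\nu$'' and ``$\rho_r(dx)=\nu(x)\chi_{\{s(x)\le r\}}\,dx$'' are false for the unstopped $B_r$ (the particle diffuses away from $B_\tau$ for $r>\tau$), though they are correct for $\tilde\rho_r(A):=\P(B_\tau\in A,\,\tau\le r)$. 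The fix is to run It\^o for $(B_{t\wedge\tau},t)$ with time \emph{not} stopped; then the second-order term carries $\chi_{r<\tau}$ and produces exactly $\int\!\int\tfrac12\varphi_{xx}u$, the first-order term integrates against $u(\cdot,r)\,dx+\tilde\rho_r$, and your identification $\tilde\rho_r=\nu\chi_{\{s\le r\}}$ is now legitimate. Alternatively, simply follow your first paragraph (equivalently the paper) and stop both coordinates at $\tau$, computing $\E\varphi(B_\tau,\tau)$ directly. Your remarks on the subsolution property and the $L^2_{\mathrm{loc}}H^1_{\mathrm{loc}}$ energy estimate are correct and match the paper.
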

\begin{proof} Let $\varphi\in C_c^{\infty}(\R \times [0,\infty))$. Using It\^o's formula as in \eqref{qwodkr8},
\begin{multline}\label{qwodkr8u392qwxk12xv}
    \E(\varphi(B_{\tau},\tau))= \E(\varphi(B_0,0))+\E\left( \int_{0}^\tau \left(\frac{1}{2}\varphi_{xx}(B_s,s)+\varphi_t(B_s,s)\right)ds\right)
    \\=\int_{\R}u_0(x)\varphi(x,0)dx+ \int_{0}^{\infty}\int_{\R} \left( \frac{1}{2}\varphi_{xx}+\varphi_t\right)u(x,t)dxdt.\end{multline}
    Since $B_{\tau}=\Lambda_{\tau}$ whenever $\Lambda_{\tau}=\Lambda_{\tau^-}$, we have
\begin{equation}  \label{expcqwsum02s}\E(\varphi(B_{\tau},\tau))=\E(\varphi(\Lambda_{\tau},\tau), \Lambda_{\tau}=\Lambda_{\tau^-})+\E(\varphi(B_{\tau},\tau),\Lambda_{\tau}>\Lambda_{\tau^-}).\end{equation}
Now, observing that $\alpha^{-1}\Lambda_t=\P(\tau \leq t)$ is the cumulative distribution function of $\tau$, it follows that
    \begin{equation} \label{qwodkpsxk12xv}
        \E(\varphi(\Lambda_{\tau},\tau), \Lambda_{\tau}=\Lambda_{\tau^-})=\int_{\Lambda_s=\Lambda_{s-}}\varphi(\Lambda_s,s)d(\alpha^{-1}\Lambda_s).
    \end{equation}
Therefore, changing variables as in \eqref{cov nu231}, we have
     \begin{multline} \label{0392kcmew0mc0m}
         \int_{\Lambda_s=\Lambda_{s-}}\varphi(\Lambda_s,s)d(\alpha^{-1}\Lambda_s)
         =\alpha^{-1}\int_{{\Lambda_{s(x)}=\Lambda_{s(x)-}}}\varphi( x,s(x))dx\\=\alpha^{-1}\int_{\Lambda_{s(x)}=\Lambda_{s(x)-}}\int_{0}^{s(x)}\varphi_tdtdx+\alpha^{-1}\int_{\Lambda_{s(x)}=\Lambda_{s(x)-}}\varphi(x,0)dx\\=\int_{\Lambda_{s(x)}=\Lambda_{s(x)-}}\int_{0}^{\infty}\varphi_t\nu\chi_{u>0}dtdx+\int_{\Lambda_{s(x)}=\Lambda_{s(x)-}}\varphi(x,0)\nu(x)dx,
     \end{multline}
where the integrals are understood to be over $x\in (0,\alpha)$. Thus, we infer from \eqref{qwodkpsxk12xv} and \eqref{0392kcmew0mc0m} that
\begin{equation} \label{epx1dwlpas}
    \E(\varphi(\Lambda_{\tau},\tau), \Lambda_{\tau}=\Lambda_{\tau^-})=\int_{\Lambda_{s(x)}=\Lambda_{s(x)-}}\int_{0}^{\infty}\varphi_t\nu\chi_{u>0}dtdx+\int_{\Lambda_{s(x)}=\Lambda_{s(x)-}}\varphi(x,0)\nu(x)dx.
\end{equation}
On the other hand, since the set of discontinuities of $\Lambda$ is countable, using \eqref{u(t-) formula} and recalling the convention $u(\cdot,0-):=u_0$, we have
\begin{multline}  \label{qw2dklcfpqws2}   \E(\varphi(B_{\tau},\tau),\Lambda_{\tau}>\Lambda_{\tau^-})
=\sum_{\Lambda_{t}>\Lambda_{t-}}\E(\varphi(B_{\tau},\tau),\tau= t)
=\sum_{\Lambda_{t}>\Lambda_{t-}}\E(\chi_{B_t \in [\Lambda_{t-},\Lambda_t]}\varphi(B_{t},t),\tau\geq t)\\=\sum_{\Lambda_{t}>\Lambda_{t-}}\int_{\Lambda_{t-}}^{\Lambda_{t}}\varphi(x,t)u(x,t-)dx
=\sum_{\Lambda_{t}>\Lambda_{t-}}\int_{\Lambda_{t-}}^{\Lambda_{t}}\varphi(x,t)\nu(x)dx
\\=\sum_{\Lambda_{t}>\Lambda_{t-}}\int_{\Lambda_{t-}}^{\Lambda_{t}}\int_{0}^{\infty}\varphi_t\nu\chi_{u>0}dtdx+\int_{\Lambda_{t-}}^{\Lambda_t}\varphi(x,0)\nu(x)dx\\
=\int_{\Lambda_{s(x)}>\Lambda_{s(x)-}}\int_0^{\infty}\varphi_t\nu \chi_{u>0}dtdx+\int_{\Lambda_{s(x)}>\Lambda_{s(x)-}}\varphi(x,0)\nu(x)dx.
\end{multline}
In view of \eqref{expcqwsum02s}, \eqref{epx1dwlpas}, and \eqref{qw2dklcfpqws2}, using the fact that $\varphi$ is compactly supported and $\nu$ is supported in $[0,\alpha]$, we have
\begin{equation}
\E(\varphi(B_{\tau},\tau))=\int_{0}^{\infty}\int_{\R}\varphi_t(x,t)\nu(x)\chi_{u>0}dxdt+\int_{\R}\varphi(x,0)\nu(x)dx. 
\end{equation}
The equality \eqref{u equation dist} thus follows from \eqref{qwodkr8u392qwxk12xv}. Finally, the fact that $u$ is subcaloric follows readily from the fact that the set $\{u(t)>0\}$ is decreasing in time. Namely,
\[u_t-\frac12u_{xx}=(\nu\chi_{u>0})_t\leq0,\]
and the $L^2_{\operatorname{loc}}((0,\infty);H^1_{\operatorname{loc}}(\R))$ bounds on $u$ then follow from this inequality by standard energy estimates.
\end{proof}
\subsection{The weighted obstacle problem} 
In view of Lemma \ref{lem: u bound}, the function
\begin{equation} \label{w defi}
    w(x,t)=\int_{t}^{\infty}u(x,s)ds,\quad (x,t)\in \R \times (0,\infty),
\end{equation}
is well defined. We show now that $w$ enjoys $C^{1,1}_x \cap C^{0,1}_t$ regularity, and solves a parabolic obstacle problem with the weight $\nu$. The properties of $w$ will be exploited in subsequent sections to obtain the key estimates for $u$ and to analyze the free boundary.

\begin{prop} \label{prop: w eq} Let $\Lambda$ be a probabilistic solution to \eqref{supercooled eqs}, let $u$ be given by \eqref{u defi}, and let $w$ be given by \eqref{w defi}.  The functions $w_t$ and $w_{xx}$ are locally bounded, and the function $w_x$ is locally H\"older continuous. In fact, for every $\beta \in (0,1)$, and every $R\geq 1$, there exists a constant $C_R$, depending only on $R$, $\alpha$, and $\beta$, such that
\begin{multline}\|w_t\|_{L^{\infty}(\R\times (R^{-1},\infty))}+\|w_{xx}\|_{L^{\infty}(\R \times (R^{-1},\infty))}+\|w\|_{L^{\infty}((-\infty,R)\times(0,\infty))}\\+\|w_x\|_{C^{\beta,\beta/2}((-\infty,R)\times(R^{-1},\infty))}\leq C_R.\end{multline}
Furthermore, $w$ satisfies
\begin{equation} \label{w equation}
   \begin{cases} w_t-\frac{1}{2}w_{xx}=-\nu(x)\chi_{w(x,t)>0}\quad (x,t)\in \R\times (0,\infty),\\
   w\geq 0,\,\, w_t\leq0, \,\,\{w>0\}=\{w_t<0\}.
   \end{cases}
\end{equation}

\end{prop}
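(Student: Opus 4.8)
The plan is to establish, in order: the pointwise identity $w_t=-u$; the coincidence $\{w>0\}=\{u>0\}=\Omega$; the distributional equation in \eqref{w equation}; and finally the quantitative bounds. First, by Lemma \ref{lem: u bound} the function $u$ is locally bounded, and for each fixed $x$ the map $t\mapsto u(x,t)$ is continuous on $(0,\infty)$ away from $t=s(x)$, being a smooth solution of the heat equation on $(0,s(x))$ and vanishing on $(s(x),\infty)$; hence $w\ge0$ (being an integral of $u\ge0$) and $w(x,\cdot)$ is locally Lipschitz with $\partial_t w=-u\le0$ almost everywhere, hence distributionally. To identify $\{w>0\}$, recall from Lemma \ref{lem: lambda strict incr} that $\Lambda$ is strictly increasing and right-continuous, so $u(x,s)>0\iff\Lambda_s<x\iff s<s(x)$, i.e. $\{u>0\}=\{x>\Lambda_t\}=\{t<s(x)\}=\Omega$; consequently $w(x,t)=\int_t^{s(x)}u(x,s)\,ds>0$ when $t<s(x)$, being the integral of a strictly positive continuous function over a nonempty interval, while $w(x,t)=0$ when $t\ge s(x)$. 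This gives $\{w>0\}=\Omega=\{u>0\}=\{w_t<0\}$, the last line of \eqref{w equation}.

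For the equation itself I would integrate the identity of Proposition \ref{prop: u eq} in time over $(t,\infty)$. Heuristically, since $u(x,s)\to0$ as $s\to\infty$ by \eqref{u bound} and $\chi_{u(x,s)>0}=\chi_{s<s(x)}$, integrating $u_s-\tfrac12 u_{xx}=\partial_s(\nu(x)\chi_{u>0})$ yields $-u(x,t)-\tfrac12\partial_{xx}\!\int_t^\infty u(x,s)\,ds=-\nu(x)\chi_{t<s(x)}$, i.e. $w_t-\tfrac12 w_{xx}=-\nu(x)\chi_{w>0}$. Rigorously, this is obtained by testing the weak formulation \eqref{u equation dist} against the time-antiderivative $\tilde\psi(x,s)=\int_0^s\psi(x,t)\,dt$ of a given $\psi\in C_c^\infty(\R\times(0,\infty))$; although $\tilde\psi$ is not compactly supported in time, it is an admissible test function thanks to the decay and growth bounds \eqref{u bound}--\eqref{w bound}, and Fubini's theorem applied to $\int\!\!\int w\,\psi_t$ and $\int\!\!\int w\,\psi_{xx}$ then produces \eqref{w equation}. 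Together with the first paragraph, this establishes all of \eqref{w equation}.

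The $L^\infty$ bounds come next. From $w_t=-u$ and \eqref{u bound}, $\|w_t\|_{L^\infty(\R\times(R^{-1},\infty))}\le CR^{1/2}$; from \eqref{w bound}, $0\le w(x,t)\le 2x$ for $x>0$, while $w(\cdot,t)\equiv0$ on $(-\infty,0]$ since there $x\le0\le\Lambda_0\le\Lambda_t$ forces $u(\cdot,t)\equiv0$, whence $\|w\|_{L^\infty((-\infty,R)\times(0,\infty))}\le2R$. For $w_{xx}$ I use the equation in the form $w_{xx}=2w_t+2\nu\chi_{w>0}$, so it suffices to bound $\nu\chi_{w>0}$ on $\R\times(R^{-1},\infty)$: if $t>R^{-1}$ and $w(x,t)>0$ then $s(x)>t>R^{-1}$, and either $x$ lies in no jump interval, so $\nu(x)\le\alpha^{-1}$ by \eqref{nu defi}, or $\Lambda_{t_*^-}<x<\Lambda_{t_*}$ for a jump time $t_*=s(x)>R^{-1}$, in which case Lemma \ref{lem: smooth ext} and \eqref{u bound} give $\nu(x)=u(x,s(x)^-)=\lim_{r\uparrow s(x)}u(x,r)\le Cs(x)^{-1/2}\le CR^{1/2}$. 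Hence $\|w_{xx}\|_{L^\infty(\R\times(R^{-1},\infty))}\le 2CR^{1/2}+2\max(\alpha^{-1},CR^{1/2})=:C_R$. In particular $w(\cdot,t)$ is globally $C^{1,1}$ in $x$ for a.e.\ $t$, and since $w(\cdot,t)\ge0$ vanishes on $(-\infty,0]$ one has $w_x(0,t)=0$, so $\|w_x\|_{L^\infty((-\infty,R)\times(R^{-1},\infty))}\le C_R R$.

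Finally, the H\"older continuity of $w_x$. Having $w_t,w_{xx}\in L^\infty$ and $w$ Lipschitz in $t$ on the relevant set, one concludes by a standard interpolation: writing $w_x(x,t)$ as a centered second difference of $w$ in $x$ with step $h$, the error is $O(\|w_{xx}\|_\infty h)$, and bounding the $t$-increment of this difference by $\|w_t\|_\infty|t-t'|/h$ and optimizing over $h$ gives $|w_x(x,t)-w_x(x,t')|\le C(\|w_t\|_\infty\|w_{xx}\|_\infty)^{1/2}|t-t'|^{1/2}$; thus $w_x$ is Lipschitz in $x$ and $\tfrac12$-H\"older in $t$, which is stronger than the asserted $C^{\beta,\beta/2}$ bound for every $\beta\in(0,1)$. (Alternatively, one may invoke interior parabolic $L^p$ estimates and the Sobolev embedding $W^{2,1}_p\hookrightarrow C^{1+\gamma,(1+\gamma)/2}$, \cite{LiebermanBook}.) The two substantive points are the rigorous time integration in the second paragraph — the source $\partial_t(\nu\chi_{u>0})$ is a countable sum of Dirac masses in time and the natural antiderivative test function fails to be compactly supported — and the realization in the third paragraph that $\nu$, which need not be globally bounded (for instance $\nu=u_0$ on $(0,\Lambda_0)$, and jump times may accumulate at $t=0$), is nevertheless controlled by $CR^{1/2}$ on the set $\{t>R^{-1},\,w>0\}$ precisely because there the freezing time exceeds $R^{-1}$.
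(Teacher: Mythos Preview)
Your proof is correct and follows the same overall strategy as the paper: identify $w_t=-u$ and $\{w>0\}=\{u>0\}$, integrate the weak formulation \eqref{u equation dist} in time (the paper uses a cutoff $\zeta_R(t)\Psi(x,t)$ and passes to the limit, which is precisely the rigorous version of your admissibility claim for $\tilde\psi$), and then read off the bounds from the resulting obstacle equation.

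Two points deserve mention. First, your treatment of the $w_{xx}$ bound is actually more explicit than the paper's: you correctly observe that $\nu$ need not be globally bounded, but that $\nu(x)\chi_{w(x,t)>0}$ with $t>R^{-1}$ forces $s(x)>R^{-1}$, whence $\nu(x)=u(x,s(x)^-)\le C s(x)^{-1/2}\le CR^{1/2}$ on jump intervals. The paper compresses this into ``follows directly from \eqref{w equation} and \eqref{nu defi}.'' Second, for the H\"older regularity of $w_x$ the paper invokes the parabolic Sobolev embedding (citing \cite{ladyzhenskaya}), while you give a self-contained interpolation argument: approximate $w_x$ by a centered difference quotient with error $O(\|w_{xx}\|_\infty h)$, bound the time increment of that quotient by $\|w_t\|_\infty|t-t'|/h$, and optimize in $h$. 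This yields Lipschitz-in-$x$ and $\tfrac12$-H\"older-in-$t$, which indeed dominates $C^{\beta,\beta/2}$ for all $\beta<1$. Your route is more elementary and avoids an external reference; the paper's is shorter to state. Both are valid.
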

\begin{proof} The fact that $w_t \leq 0$ and $\{w>0\}=\{u>0\}$ follows directly from the definition. Moreover, we infer from \eqref{u bound} that, for $R \geq 1$, there exists a constant $C>0$, depending only on $\alpha$ and $\beta$, such that
\begin{equation}
 w(x,t)\leq w(x,0)\leq 2x\leq 2R, \;\;x\leq R,\;\; t\in [0,\infty).  
\end{equation}
To see that $w$ satisfies \eqref{w equation}, we proceed formally first. Integrating \eqref{u equation} from $t$ to $T$, and using the fact that $w_t=-u$ and $\nu$ is supported on $(0,\alpha)$, we have
\begin{equation}
-\nu(x)\chi_{w>0}(x,t)=\nu(x)\chi_{w>0}(x,s)\Bigr\rvert_{s=t}^{s=\infty} =-\left(w_t-\frac12w_{xx}\right) (x,s)\Biggr \rvert_{s=t}^{s=\infty}=\left(w_t-\frac12w_{xx}\right)(x,t). 
\end{equation}
The precise justification of this formal computation proceeds as follows. Let $\psi\in C^{\infty}_c(\R \times(0,\infty))$, let $R\geq 1$, and let 
\[ \varphi(x,t)= \zeta_R(t)\Psi(x,t), \quad \Psi(x,t)=\int_0^t\psi(x,s)ds, \quad  \zeta_R(t)=\zeta(t/R),\] where $\zeta\in C^{\infty}_c([0,\infty))$ satisfies $\zeta\geq0$, $\zeta \equiv 1$  on $[0,1]$, and $\zeta \equiv 0$ on $[2,\infty)$. Then, noting that $\chi_{w>0}=\chi_{u>0}$, and
$\zeta_R\psi \equiv \psi$ for sufficiently large $R$, \eqref{u equation dist} yields
\begin{equation} \label{opsequjf1}
     \int_{0}^{\infty}\int_{\R} \left( \psi u-\varphi_t\nu \chi_{w>0}+\zeta_R'\Psi u+\frac{1}{2}\varphi_{xx}u\right)dxdt=0.
\end{equation}
Since $\varphi$ is compactly supported and $\nu(x)\chi_{w>0}(x,\cdot)\equiv 0$ for $x\notin (\Lambda_0,\alpha)$, we have, for sufficiently large $R$,
\begin{equation} \label{opsequjf2}
    \int_{\R} \int_{0}^{\infty} \varphi_t\nu \chi_{w>0}dtdx=\int_{\Lambda_0}^{\alpha}  \int_{0}^{\infty} \varphi_t\nu \chi_{w>0}dtdx=\int_\R  \int_{0}^{\infty} (\psi\nu \chi_{w>0})dtdx+\int_{\Lambda_0}^{\alpha}\int_{0}^{\infty} \zeta_R'\Psi\nu \chi_{w>0}dtdx,
\end{equation}
and
\begin{equation}\label{opsequjf3}
\left|\int_{\Lambda_0}^{\alpha}\int_{0}^{\infty} \zeta_R'\Psi\nu \chi_{w>0} dtdx\right|=\left| \int_{\Lambda_0}^{\alpha}R^{-1}\int_{R}^{2R}\zeta'(t/R)\Psi(x,t)\nu(x)\chi_{w>0} dtdx \right|\leq C 
\left|\int_{\Lambda_0}^{\alpha} \nu(x)\chi_{R\leq s(x)}dx\right|:=CE_R,
\end{equation}
where $C>0$ depends only on $\zeta$, $\psi$, and $\alpha^{-1}$. We observe that the integrand in $E_R$ equals zero whenever $R>s(x)$, and is bounded by the integrable function $\nu$. Therefore, by dominated convergence, $E_R=o(1)$ as $R\to \infty$. Next, we have, for sufficiently large $R$,
\begin{multline}\label{opsequjf4}
\int_{\R}\int_{0}^{\infty}\varphi_{xx}udtdx=-\int_{\R}\int_{0}^{\infty}\varphi_{xx}w_tdtdx\\=\int_{\R}\int_0^{\infty}\psi_{xx}wdtdx +\int_{\R}\int_0^{\infty}\zeta_R'\Psi_{xx}wdtdx -\left(\int_{\R} \varphi_{xx}w(x,t)dx\right) \Biggr \rvert_{t=0}^{t=\infty}=\int_{\R}\int_0^{\infty}\psi_{xx}wdxdt+o(1)
\end{multline}
as $R\to \infty$, where the term $\left|\int_{\R}\int_0^{\infty}\zeta_R'\Psi_{xx}wdtdx \right|$ was estimated in the same way as \eqref{opsequjf3}.
Finally, if $A$ is the spatial projection of the support of $\varphi$, then \eqref{u bound} yields
\begin{equation}\label{opsequjf5}
    \left| \int_{\R} \int_{0}^{\infty} \zeta'_R\Psi u dtdx\right| \leq C \left| \int_{A} \sup_{t\in[R,2R]}u(x,t)dx \right|\leq \frac{C}{\sqrt{R}} \left| A \right| =o(1)
\end{equation}
as $R\to \infty$. Hence, putting together \eqref{opsequjf1}, \eqref{opsequjf2}, \eqref{opsequjf3}, \eqref{opsequjf4}, and \eqref{opsequjf5}, we obtain
\begin{equation}
     \int_{0}^{\infty}\int_{\R} \left( -\psi w_t-\psi\nu \chi_{w>0}+\frac{1}{2}\psi_{xx}w\right)dxdt=o(1),
\end{equation}
which yields \eqref{w equation} by letting $R \to \infty$. The local bound on $|w_t|=u$ was shown in Lemma \ref{lem: u bound}, and  the bound on $|w_{xx}|$ then follows directly from \eqref{w equation} and \eqref{nu defi}. Finally, the $C^{\beta,\beta/2}$ estimate for $w_x$ follows from the parabolic Sobolev embedding (see \cite[Ch. II, Lem. 3.3]{ladyzhenskaya}).
\end{proof}
\subsection{Second-order estimates for $u$}
We now derive the second-order bounds on the solution that will allow us to prove the nondegeneracy and regularity results. Given $(x_0,t_0) \in \R\times (0,\infty)$ and $r>0$, we define the parabolic cylinders
\begin{equation}
    Q_r(x_0,t_0):=(x_0-r,x_0+r)\times (t_0-r^2,t_0+r^2), \quad Q_r^-(x_0,t_0):=(x_0-r,x_0+r)\times (t_0-r^2,t_0].
\end{equation}
We begin with an upper bound on the time derivative.
\begin{prop}\label{prop: semiconvexity} Let $\Lambda$ be a probabilistic solution to \eqref{supercooled eqs}, let $u$ be given by \eqref{u defi}, and let $w$ be given by \eqref{w defi}.  Then $u_t$ is locally bounded above in $\R \times (0,\infty)$. Namely, $(u_t)^+ \in L^{\infty}_{\operatorname{loc}}(\R \times (0,\infty))$ and, for every $R\geq1$, there exists a constant $C_R$, depending only on $R$, such that
    \begin{equation}
    \|(u_t)^+\|_{L^{\infty}((-\infty,R)\times(R^{-2},\infty))} \leq C_R \quad R\geq 1.
\end{equation}
\end{prop}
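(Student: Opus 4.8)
The plan is to exploit the obstacle problem \eqref{w equation} satisfied by $w$, rather than attempting to differentiate the singular equation \eqref{u equation} directly. The key observation is that $-u = w_t$, so an upper bound on $u_t$ is the same as a \emph{lower} bound on $w_{tt}$, i.e. a \emph{semiconvexity-in-time} estimate for $w$. Since $w$ satisfies a parabolic obstacle problem with a time-independent (though irregular) right-hand side $-\nu(x)\chi_{w>0}$, such one-sided second-derivative bounds are exactly the kind of estimate the obstacle-problem machinery provides: in the coincidence set $\{w=0\}$ one has $w_{tt}=0$, while in the non-coincidence set $\{w>0\}=\{u>0\}=\Omega$, the function $w$ solves $w_t - \tfrac12 w_{xx} = -\nu(x)$ classically, and differentiating in $t$ shows $w_t$ (hence $u$) is caloric there, so $u_t$ is controlled by interior parabolic estimates away from the free boundary. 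The whole difficulty is therefore concentrated near the free boundary $\partial\Omega$, and the mechanism that saves us is that $\Omega$ is \emph{decreasing in time}: once $w$ vanishes it stays zero, so $w_t$ can only jump \emph{upward} (from a negative value $-u<0$ up to $0$) as $t$ increases through the free boundary, which is the correct sign for $w_{tt}\geq -C$, i.e. $u_t \leq C$.

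The cleanest way to make this rigorous is a \emph{difference-quotient / comparison} argument. Fix $R\geq 1$ and a small $h>0$, and compare $w(x,t)$ with $w(x,t+h)$ on a suitable space-time region. Both are nonnegative; $w(\cdot,\cdot+h)$ satisfies the same equation with the same weight $\nu$ on its own (smaller) positivity set, and vanishes wherever $w$ does. Using Proposition \ref{prop: w eq} we already have $w, w_t=-u$, and $w_{xx}$ locally bounded and $w_x$ locally Hölder; in particular on a fixed cylinder $(-\infty,R)\times(R^{-2},\infty)$ we may treat $w$ as a $C^{1,1}_x\cap C^{0,1}_t$ solution. The idea is to show that the function $v_h := w(x,t) - w(x,t+h) + C h\,\eta(x,t)$, for an appropriate smooth cutoff/barrier $\eta$ and constant $C=C_R$, is nonnegative on a parabolic cylinder, by checking (i) it is a supersolution of $\partial_t - \tfrac12\partial_{xx}$ wherever $w(x,t+h)>0$ — here the two weights $-\nu(x)\chi_{w(\cdot,t)>0}$ and $-\nu(x)\chi_{w(\cdot,t+h)>0}$ \emph{coincide} because $w(x,t+h)>0 \Rightarrow w(x,t)>0$, so the bad source terms cancel exactly and only the (bounded) contribution of $\eta$ remains; (ii) where $w(x,t+h)=0$ we have $v_h \geq w(x,t) + Ch\eta \geq 0$ trivially; and (iii) $v_h\geq 0$ on the parabolic boundary of the cylinder, using the global bound $\|w\|_{L^\infty((-\infty,R)\times(0,\infty))}\leq C_R$ from Proposition \ref{prop: w eq} together with a barrier $\eta$ chosen to blow up near that boundary (or, more simply, handling the unbounded left side $x\to-\infty$ by noting $w\equiv 0$ there, and the bottom $t=R^{-2}$ by using the already-known bound on $w_t$ there after shifting). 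Then the minimum principle forces $v_h\geq 0$ in the interior, i.e. $w(x,t)-w(x,t+h)\geq -Ch\eta(x,t)$, which rearranges to $\frac{u(x,t+h)\cdot h + (\text{lower order})}{\,}$... more precisely, dividing by $h$ and sending $h\downarrow 0$ gives $-w_t(x,t) \geq$ (a difference quotient of $-w_t$) bounded, hence $u_t(x,t)\leq C_R$ in the distributional and, by the $L^\infty$ bound, a.e.\ sense on $(-\infty,R)\times(R^{-2},\infty)$.

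An alternative, possibly shorter route avoids barriers entirely: observe that $\partial_t w(\cdot,\cdot)$ and $\partial_t w(\cdot,\cdot+h)$ both satisfy the heat equation on the open set $\Omega_h := \{w(x,t+h)>0\}\subset\Omega$, and on $\Omega_h$ the difference $D_h w := w(x,t+h)-w(x,t)$ is caloric (weights cancel) and $\leq 0$ (since $w_t\leq 0$); by the interior Harnack / parabolic estimate for the nonpositive caloric function $D_h w$ on $\Omega_h$, $|D_h w(x,t)|\lesssim h\,(\text{something bounded})$ uniformly as one approaches $\partial\Omega_h$ from inside — while on $\{w(x,t+h)=0\}$ one has $0\leq w(x,t)=-D_h w \leq $ the same bound by the non-degeneracy/continuity of $w$ (here one uses that $w\in C^{1,1}_x$ and the free boundary in time is monotone so the "height" $w(x,t)$ just before freezing is $O(h)\times$ the time-derivative bound, i.e.\ controlled since $w_t$ is bounded). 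Either way the linchpin — and the step I expect to be the main obstacle — is the passage across the free boundary: one must argue that the "drop" $w(x,t)$ at a point $(x,t)$ lying just below the freezing level is at most $C h$, uniformly in $x$, which is precisely where the one-sided bound $w_t\geq -C_R$ from Proposition \ref{prop: w eq} (equivalently, $u\leq C_R$ locally, already established in Lemma \ref{lem: u bound}) must be invoked: it says $w$ cannot have dropped by more than $C_R h$ over the time interval $[t,t+h]$, which closes the estimate. Thus the proposition reduces to combining the exact cancellation of the singular source under time-shift with the already-proven first-order bounds on $w$.
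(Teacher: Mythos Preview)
Your intuition is right in two respects: the favorable sign of the jump of $w_t$ across the free boundary, and the exact cancellation of the singular source under a time shift (since $\{w(\cdot,\cdot+h)>0\}\subset\{w>0\}$). However, the comparison function you propose does not prove the proposition. The quantity $v_h = w(x,t)-w(x,t+h)+Ch\,\eta$ is a \emph{first-order} time difference of $w$; since $w_t=-u\leq 0$, the left-hand side of the inequality $w(x,t)-w(x,t+h)\geq -Ch\,\eta$ is already nonnegative, so the inequality is vacuous, and letting $h\downarrow 0$ yields only $-w_t\geq -C\eta$, i.e.\ $u\geq -C\eta$. Bounding $u_t$ from above is a \emph{second-order} statement about $w$: you need to control $z:=h^{-1}(u(\cdot,\cdot+h)-u)$, not $h^{-1}(w(\cdot,\cdot+h)-w)$. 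Your ``alternative route'' has the same defect: the Harnack-type bound $|D_hw|\lesssim h$ is exactly the statement $u\leq C$, already known from Lemma~\ref{lem: u bound}, and says nothing about $u_t$.

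If you switch to the correct quantity $z$, your cancellation observation does show that $z^+$ is globally subcaloric (caloric in $\{u>0\}$, identically zero on the interior of $\{u(\cdot,\cdot+h)=0\}$). But a pure maximum-principle argument cannot close: on the bottom of any cylinder $\{t=t_1\}$ there is no a~priori pointwise bound on $z^+$, since that is precisely the estimate you are trying to prove. The paper resolves this by replacing the maximum principle with the Krylov--Safonov $L^\varepsilon$-to-$L^\infty$ estimate for the subcaloric function $z^+$, which requires only an \emph{integral} bound. That bound (in weak $L^1$) is obtained via a Calder\'on--Zygmund estimate for $v=h^{-1}(w(\cdot,\cdot+h)-w)$, whose heat-equation right-hand side has $L^1$ norm controlled by $\|v\|_{L^1}$ and hence by $\|w\|_{L^1}$. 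This integral-to-pointwise upgrade is the missing idea in your scheme.
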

\begin{proof} Let $(x_0,t_0) \in \R \times (0,\infty)$,  $r=\frac{\sqrt{t_0}}{10}$, $D=Q_{6r}(x_0,t_0)$, $h\in (0,r^2)$, and let
\[z(x,t)=\frac{u(x,t+h)-u(x,t)}{h}.\]
In $\{u>0\}$, we have
\[z_t-\frac12 z_{xx}=  h^{-1}(\nu \chi_{u(t+h)>0})_t \leq0,\]
so $z$ is subcaloric in $\{u>0\}\cap D$. This implies, in particular, that $z^+$ is subcaloric in $\{u>0\}\cap D$. Given that $s$ is continuous, the set
\[ A=\{(x,t)\in D: s(x)<t+h\}=\{(x,t)\in D: u(x,t+h)=0\}^{\circ}\] is an open neighborhood of $\{u=0\}\cap D$, disjoint from $\{u(t+h)>0\}$. Additionally, for $(x,t)\in A$, $z^+(x,t)=h^{-1}(u(x,t+h)-u(x,t))^+=h^{-1}(-u(x,t))^+=0$, so trivially $z^+$ is subcaloric in $A$. Noting that $D$ is the union of the overlapping open sets $A \cup (\{u>0\}\cap D)=D$, in both of which $z^+$ is subcaloric, we conclude that $z^+$ is subcaloric in $D$. Therefore, by the parabolic Krylov--Safonov $L^{\vep}$ estimate (\cite[Thm. 4.16]{wang}), writing $Q_r := Q_r(x_0,t_0)$, and fixing some arbitrary $\vep \in (0,1)$, we have
\begin{equation} \label{cz2-edlsmg1}
\|z^+\|_{L^{\infty}(Q_r)}\leq C \left( \fint_{Q_{2r}} (z^+)^{\vep}  \right)^{\frac{1}{\vep}} \leq C|Q_{2r}|^{-1}\text{sup}_{\theta>0}\theta |\{|z|>\theta\}\cap Q_{2r}|.
\end{equation}
We note that in the last inequality we used the embedding $L^1$--weak $\hookrightarrow L^{\vep} $, which holds for $\vep \in (0,1)$ by a standard interpolation argument. But $z = -v_t$, where 
\[v(x,t)=h^{-1}(w(x,t+h)-w(x,t)),\]
so, by the Calder\'on-Zygmund estimate for the heat equation (see \cite[Thm. 3.5]{figalli}, \cite{jones}),
\begin{equation}\label{cz2-edlsmg2}\text{sup}_{\theta>0}\theta |\{|z|>\theta\}\cap Q_{2r}|\leq \text{sup}_{\theta>0}\theta |\{|v_t|+|v_{xx}|>\theta\}\cap Q_{2r}| \leq C(r^{-2}\|v\|_{L^1(Q_{3r})}+\|v_t-v_{xx}/2\|_{L^1(Q_{3r})}).\end{equation}
On the other hand, since $\nu \geq0$ and $w_t \leq 0$, we have, by Proposition \ref{prop: w eq},
\[v\leq 0, \quad v_t- v_{xx}/2=-\nu(x)(\chi_{w(t+h)>0}-\chi_{w(t)>0})\geq 0.\]
By testing against a smooth, compactly supported bump function $\zeta$, supported in $Q_{4r}$, that equals $1$ in $Q_{3r}$, this implies
\begin{equation}\label{cz2-edlsmg3}\|v_t-v_{xx}/2\|_{L^1(Q_{3r})}\leq \left|\int(v_t-v_{xx}/2)\zeta\right|=\left|\int(\zeta_t+\zeta_{xx}/2)v\right| \leq \frac{C}{r^2}\|v\|_{L^1(Q_{4r})}.\end{equation}
Similarly, letting $\zeta$ be supported in $Q_{5r}$ and equal to $1$ in $Q_{4r}$,
\begin{multline}\label{cz2-edlsmg4}\|v\|_{L^1(Q_{4r})}\leq\left|\int v(x,t)\zeta(x,t)dxdt\right| =\left|h^{-1}\int(w(x,t+h)-w(x,t))\zeta(x,t)dxdt\right|\\=\left|h^{-1}\int(\zeta(x,t-h)-\zeta(x,t))w(x,t)dxdt\right| \leq \|\zeta_t\|_{L^{\infty}(Q_{5r})}\|w\|_{L^1(Q_{6r})}\leq \frac{C}{r^2}\|w\|_{L^1(Q_{6r})}.\end{multline}
Thus, we conclude from \eqref{cz2-edlsmg1}, \eqref{cz2-edlsmg2}, \eqref{cz2-edlsmg3}, \eqref{cz2-edlsmg4}, and \eqref{w bound} that
\[\|z^+\|_{L^{\infty}(Q_r)}\leq Cr^{-7}\|w\|_{L^1(Q_{6r})}\leq Cr^{-4}(|x_0|+r). \]
The result follows by letting $h \to 0$. 
\end{proof}
As an immediate application, we obtain semiconcavity and gradient estimates that hold up to the boundary. We note, however, that unlike Proposition \ref{prop: semiconvexity}, these bounds do not preclude $(u_{xx})^+$ from having a singular part concentrated on the frontier.
\begin{lem} \label{lem: uxx ux bd}  Let $\Lambda$ be a probabilistic solution to \eqref{supercooled eqs}, and let $u$ be given by \eqref{u defi}. Then, for every $R\geq 1$, there exists a constant $C_R>0$, depending only on $R$, such that
    \begin{equation}
        u_{xx}(x,t)\leq C_R, \quad  u_x \geq -C_R,\quad (x,t)\in ((-\infty,R) \times (R^{-2},\infty)) \cap\{u>0\}.
    \end{equation}
\end{lem}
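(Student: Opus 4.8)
The plan is to obtain both inequalities as essentially immediate consequences of Proposition~\ref{prop: semiconvexity}, using only that $u$ solves the heat equation in $\Omega=\{u>0\}$. For the bound on $u_{xx}$ there is nothing to do beyond a substitution: inside $\Omega$ one has $u_{xx}=2u_t$, and since $(-\infty,R)\times(R^{-2},\infty)$ is exactly the region to which Proposition~\ref{prop: semiconvexity} applies, on $\Omega\cap\big((-\infty,R)\times(R^{-2},\infty)\big)$ we get $u_{xx}=2u_t\le 2(u_t)^+\le 2C_R$.

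For the lower bound on $u_x$, the idea is to run a one-dimensional concavity argument in the spatial slice $\{t=t_0\}$, exploiting that $u\ge 0$. First I would note that any $(x_0,t_0)\in\Omega$ satisfies $x_0>\Lambda_{t_0}\ge 0$. To have room to the right of $x_0$, I would apply the $u_{xx}$ bound just obtained with $R$ replaced by $R+1$, which (using $t_0>R^{-2}>(R+1)^{-2}$) gives $u_{xx}(\cdot,t_0)\le\tilde C:=2C_{R+1}$ on the interval $(\Lambda_{t_0},R+1)$; here $u(\cdot,t_0)$ is smooth on $(\Lambda_{t_0},\infty)$ because it solves the heat equation in $\Omega$. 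Hence $g(x):=u(x,t_0)-\tfrac{\tilde C}{2}x^2$ is concave on $(\Lambda_{t_0},R+1)$, and taking $b:=x_0+1$, which lies in $(\Lambda_{t_0},R+1)$ since $x_0>\Lambda_{t_0}$ and $x_0<R$, concavity yields $g'(x_0)\ge g(b)-g(x_0)$, i.e.
\[
u_x(x_0,t_0)\ \ge\ \tilde C x_0+u(b,t_0)-u(x_0,t_0)-\tfrac{\tilde C}{2}(b^2-x_0^2).
\]
Then I would bound the right-hand side from below term by term: $\tilde C x_0\ge 0$, $u(b,t_0)\ge 0$, $u(x_0,t_0)\le C t_0^{-1/2}\le CR$ by \eqref{u bound}, and $\tfrac{\tilde C}{2}(b^2-x_0^2)=\tfrac{\tilde C}{2}(2x_0+1)\le\tfrac{\tilde C}{2}(2R+1)$ since $x_0<R$. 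This produces $u_x(x_0,t_0)\ge -C_R$ with $C_R$ depending only on $R$ and $\alpha$.

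I do not expect a genuine obstacle here; the only points requiring care are bookkeeping of the domains — checking that enlarging the parameter from $R$ to $R+1$ keeps the constant depending only on $R$, and that $t_0>R^{-2}$ implies $t_0>(R+1)^{-2}$ — and the fact that everything is proved \emph{interior} to $\Omega$, so nothing is asserted about the behavior of $u_x$ or $u_{xx}$ as $x\downarrow\Lambda_t$, consistent with the remark preceding the statement that $(u_{xx})^+$ and $(u_x)^-$ may carry a singular part on the frontier.
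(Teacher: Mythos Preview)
Your proposal is correct and follows essentially the same approach as the paper: the bound $u_{xx}\le C_R$ comes directly from $u_{xx}=2u_t$ in $\{u>0\}$ together with Proposition~\ref{prop: semiconvexity}, and the lower bound on $u_x$ is obtained by exploiting the semi-concavity of $u(\cdot,t_0)$ on an interval of unit length to the right of $x_0$, combined with $u\ge 0$ and the pointwise bound \eqref{u bound}. The paper phrases the second step via the mean value theorem (picking $y\in(x_0,x_0+1)$ with $u_x(y,t_0)=u(x_0+1,t_0)-u(x_0,t_0)$ and then estimating $u_x(y,t_0)-u_x(x_0,t_0)$ by the supremum of $u_{xx}$), whereas you phrase it via concavity of $u(\cdot,t_0)-\tfrac{\tilde C}{2}x^2$; these are the same computation, and your bookkeeping with $R\mapsto R+1$ is exactly what the paper's terse argument implicitly requires.
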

\begin{proof} The upper bound on $u_{xx}$ follows immediately from Proposition \ref{prop: semiconvexity}, in view of the fact that $u_{xx}=2u_t$ in $\{u>0\}$. On the other hand, since $(x+1,t)\in \{u>0\}$ whenever $(x,t)\in \{u>0\}$, the mean value theorem implies that $u(x+1,t)-u(x,t)=u_x(y,t)$ for some $y\in (x,x+1)$, so that
\begin{equation*}
-u_x(x,t)=u_x(y,t)-u_x(x,t) +u(x,t)-u(x+1,t) \leq  \max_{z\in [x,x+1]} (u_{xx}(z,t))+u(x,t)\leq C_{R}.
\end{equation*}    
\end{proof}
As a second application of Proposition \ref{prop: semiconvexity}, we can now make sense of the limiting value $u(\Lambda_{t_0-},t_0-)$, for any $t_0>0$.

\begin{lem}\label{lem: lim past regu}  Let $\Lambda$ be a probabilistic solution to \eqref{supercooled eqs}, let $u$ be given by \eqref{u defi}, and let $t_0>0$. The limit
\begin{equation} \label{lim past}
  \lim_{t\uparrow t_0}u(x,t)=:u(x,t_0-)
\end{equation}
exists for all $x\in \R$ and defines a continuous extension of $u$ in $[\Lambda_{t_0-},\infty)\times (0,t_0]$.
\end{lem}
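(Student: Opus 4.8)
The plan is to reduce the statement to the behavior of $u$ at the single "corner" point $(\Lambda_{t_0^-},t_0)$ and then to control it using the one-sided second-order estimates. Write $a:=\Lambda_{t_0^-}$, which is finite since $\Lambda_t=\alpha\P(\tau\le t)\le\alpha$. For $x<a$ the limit in \eqref{lim past} is immediate: since $a=\sup_{t<t_0}\Lambda_t$, there is $t_1<t_0$ with $\Lambda_{t_1}>x$, so $u(x,t)\equiv 0$ on $[t_1,t_0)$ by Lemma \ref{lem: u bound}, and the limit is $0$. For $x>a$, Lemma \ref{lem: smooth ext} already provides the limit $u(x,t_0^-)$ together with a strictly positive smooth extension of $u$ to $(a,\infty)\times(0,t_0]$. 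So the only work is at $x=a$ and in establishing joint continuity at $(a,t_0)$. Throughout I would fix $R\ge 1$ large enough (depending on $a$ and $t_0$) that $a<R$ and $R^{-2}<t_0$, and let $C_R$ denote a constant for which both Proposition \ref{prop: semiconvexity} and Lemma \ref{lem: uxx ux bd} hold. The crucial structural point, from the strict monotonicity of $\Lambda$ (Lemma \ref{lem: lambda strict incr}), is that $\Lambda_t<a$ for every $t<t_0$, so that $[a,\infty)\times(R^{-2},t_0)\subset\{u>0\}$; on this set $u$ is smooth and caloric, and one has the pointwise bounds $u_t\le C_R$ and $u_x\ge -C_R$.

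The first step is to produce the limit at $x=a$. Since $u\ge 0$ and $u_t(a,\cdot)\le C_R$ on $(R^{-2},t_0)$, the map $t\mapsto u(a,t)-C_R t$ is non-increasing and bounded below there, hence converges as $t\uparrow t_0$; call the limit $L\ge 0$, so that $u(a,t_0^-)=L$ and, moreover, $u(a,t)\ge L-C_R(t_0-t)$ for $t\in(R^{-2},t_0)$. I would then define $\bar u$ on $[a,\infty)\times(0,t_0]$ by $\bar u:=u$ for $t<t_0$ and $\bar u(\cdot,t_0):=u(\cdot,t_0^-)$; by the above this is well defined, and it is continuous on $[a,\infty)\times(0,t_0]$ away from $(a,t_0)$, coinciding with $u$ on $[a,\infty)\times(0,t_0)\subset\{u>0\}$ (where $u$ is smooth) and with the Lemma \ref{lem: smooth ext} extension on $(a,\infty)\times\{t_0\}$.

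The heart of the argument is then showing $\bar u(x,t)\to L$ as $(x,t)\to(a,t_0)$ inside $[a,\infty)\times(0,t_0]$. For the lower bound I would integrate $u_x\ge -C_R$ over $[a,x]\times\{t\}$ to get $u(x,t)\ge u(a,t)-C_R(x-a)$, combine with $u(a,t)\ge L-C_R(t_0-t)$, and pass $t\uparrow t_0$ to cover the top slice, obtaining $\bar u(x,t)\ge L-C_R(x-a)-C_R(t_0-t)$ and hence $\liminf_{(x,t)\to(a,t_0)}\bar u\ge L$. For the upper bound, fix a small $h>0$; integrating $u_t\le C_R$ over $\{x\}\times[t_0-2h,t]$ gives $u(x,t)\le u(x,t_0-2h)+2C_R h$ for $t\in(t_0-2h,t_0)$, again extended to $t=t_0$ by passing to the limit, so that sending $(x,t)\to(a,t_0)$ and using continuity of $u(\cdot,t_0-2h)$ at $a$ (since $(a,t_0-2h)\in\{u>0\}$) gives $\limsup_{(x,t)\to(a,t_0)}\bar u\le u(a,t_0-2h)+2C_R h$; letting $h\to 0$ and using $u(a,t_0-2h)\to L$ finishes the bound. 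Together these give continuity of $\bar u$ at $(a,t_0)$, and in particular the existence of $\lim_{x\downarrow a}u(x,t_0^-)=L$, completing the proof.

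I expect the one genuinely delicate point to be the lower bound at the corner: one must rule out $u$ dropping as the free boundary $t\mapsto\Lambda_t$ rises to meet $x=a$ at time $t_0$. For physical solutions this would be a consequence of non-degeneracy, but that tool is not available here, since $\Lambda$ is only assumed to be a probabilistic solution; the gradient bound $u_x\ge -C_R$ of Lemma \ref{lem: uxx ux bd} together with the one-sided time-monotonicity furnished by $u_t\le C_R$ is exactly what replaces it. Everything else is a routine monotone-convergence and interior-regularity argument.
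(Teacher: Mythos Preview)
Your proposal is correct and follows essentially the same approach as the paper: both arguments hinge on the one-sided bounds $u_t\le C_R$ (Proposition~\ref{prop: semiconvexity}) and $u_x\ge -C_R$ (Lemma~\ref{lem: uxx ux bd}), using the first to produce the limit at $x=a$ and the second to transfer information spatially from $a$ to nearby $x$. The only cosmetic difference is that the paper invokes Dini's theorem to reduce joint continuity at the corner to the right-continuity of $u(\cdot,t_0^-)$ at $a$, whereas you prove joint continuity at $(a,t_0)$ directly via the two-sided sandwich $L-C_R(x-a)-C_R(t_0-t)\le \bar u(x,t)\le u(x,t_0-2h)+2C_Rh$; these are the same estimates organized slightly differently.
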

\begin{proof} By Proposition \ref{prop: semiconvexity}, $u_t$ is locally bounded above, so the limit \eqref{lim past} exists for all $x\in \R$. In view of Lemma \ref{lem: smooth ext}, it suffices now to prove that $u(\cdot,t_0-)$ is right continuous at $x=a:=\Lambda_{t_0-}$, and that the convergence in \eqref{lim past} holds locally uniformly in $x \in [\Lambda_{t_0-},\infty)$. In fact, by the upper bound on $u_t$, assuming the continuity of $u(\cdot,t_0-)$, the locally uniform convergence follows automatically by Dini's theorem. Hence, the problem is reduced to showing that \[\lim_{x\downarrow a}u(x,t_0-)=u(a,t_0-).\]
For this purpose, observe that, by Lemma \ref{lem: lambda strict incr}, $u(x,t_0-\rho)>0$ for $x\geq a$ and small $\rho>0$. Hence, by Lemma \ref{lem: uxx ux bd}, applied to $u(\cdot,t_0-\rho)$ and letting $\rho \downarrow 0$, we have, for $x\in (a,a+1)$ and some constant $C>0$ independent of $x$,
\begin{equation} \label{ffs32d0d1k-1}
    u(a,t_0-) \leq  u(x,t_0-)+C(x-a).
\end{equation}
On the other hand, by definition of $u(\cdot,t_0-)$, given $\vep\in (0,1)$, 
\begin{equation}\label{ffs32d0d1k-2}
    u(a,t_0-)\geq u(a,\ot)-\vep
\end{equation}
for any $\ot<t_0$ sufficiently close to $t_0$. Fixing one such $\ot \in(t_0-\vep,t_0)$, and noting that, by Lemma \ref{lem: lambda strict incr}, $u(a,\ot)>0$, the smoothness of $u$ in $\{u>0\}$ implies that there exists $\delta\in (0,\vep)$, depending on $\ot$ and $\vep$, such that
\begin{equation}\label{ffs32d0d1k-3}
    u(a,\ot)\geq u(x,\ot)-\vep. 
\end{equation}
whenever $|x-a|<\delta$. But using the upper bound on $u_t$, we have
\begin{equation}\label{ffs32d0d1k-4}
    u(x,\ot)\geq  u(x,t_0-)-C(t_0-\ot) \geq  u(x,t_0-) -C\vep.
\end{equation}
Hence, by \eqref{ffs32d0d1k-1},\eqref{ffs32d0d1k-2}, \eqref{ffs32d0d1k-3}, and \eqref{ffs32d0d1k-4}, we conclude that
\begin{equation}
    u(x,t_0-)+C\vep\geq u(a,t_0-) \geq u(x,t_0-)-C\vep, \quad x\in(a,a+\delta).
\end{equation}
\end{proof}

\section{Nondegeneracy estimates for physical solutions and applications} \label{sec: estimates physical}
In the present section, we will exploit the estimates of Section \ref{sec:estimates}, combined with the physical jump condition \eqref{eq: physical defi}, to obtain nondegeneracy results for physical solutions. As applications, we prove that the freezing time $s$ is locally Lipschitz continuous, as well as Theorem \ref{thm: local to global uniqueness}. 
We begin with an elementary consequence of \eqref{eq: physical defi}.
\begin{lem} \label{lem: u large jump} Let $\Lambda$ be a physical solution to \eqref{supercooled eqs}, and let $u$ be as in \eqref{u defi}. Let $t_0>0$, and assume that $\Lambda_{t_0}>\Lambda_{t_0-}$. Then $u(\Lambda_{t_0-},t_0-)\geq \alpha^{-1}$ and $u(\Lambda_{t_0},t_0-)\in (0, \alpha^{-1}]$. Moreover, there exists $x\in (\Lambda_{t_0-},\Lambda_{t_0})$ such that $u_x(x,t_0-)<0$.  
\end{lem}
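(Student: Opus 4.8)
The plan is to extract all three assertions from the physicality condition \eqref{eq: physical defi} together with the smooth past-extension of $u$ provided by Lemma \ref{lem: smooth ext} and the semiconcavity bound of Lemma \ref{lem: uxx ux bd}. Write $a=\Lambda_{t_0^-}$ and $b=\Lambda_{t_0}$, and recall from \eqref{u(t-) formula} that $\P(t_0\leq\tau, B_{t_0}\in(a,a+x])=\int_a^{a+x}u(y,t_0^-)\,dy$ for $x>0$. Since $b-a>0$ is the jump size, \eqref{eq: physical defi} says that $b-a$ is the infimum of the set of $x>0$ with $\int_a^{a+x}u(y,t_0^-)\,dy<\alpha^{-1}x$; in particular, for every $x\in(0,b-a)$ we have $\int_a^{a+x}u(y,t_0^-)\,dy\geq\alpha^{-1}x$, while there is a sequence $x_k\downarrow 0$ (or $x=b-a$ itself, using right-continuity of the integral) with $\int_a^{a+x}u(y,t_0^-)\,dy\leq\alpha^{-1}x$ attained arbitrarily close to, or at, $x=b-a$.

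First I would prove $u(a,t_0^-)\geq\alpha^{-1}$. By Lemma \ref{lem: smooth ext}, $u(\cdot,t_0^-)$ is analytic (hence continuous) on $(a,\infty)$, and by Lemma \ref{lem: lim past regu} it extends continuously to $x=a$. Dividing the inequality $\int_a^{a+x}u(y,t_0^-)\,dy\geq\alpha^{-1}x$ by $x$ and letting $x\downarrow 0$, the left side converges to $u(a,t_0^-)$ by continuity, giving $u(a,t_0^-)\geq\alpha^{-1}$. Next, for $u(b,t_0^-)\in(0,\alpha^{-1}]$: strict positivity is immediate from Lemma \ref{lem: smooth ext}, which guarantees $u(\cdot,t_0^-)>0$ on $(a,\infty)\ni b$. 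For the upper bound, combine the two one-sided facts above at $x=b-a$: on one hand $\int_a^b u(y,t_0^-)\,dy\geq\alpha^{-1}(b-a)$ by taking $x\uparrow b-a$ in the first inequality and using continuity of the integral; on the other hand $b-a$ being in the closure of the defining set forces $\int_a^b u(y,t_0^-)\,dy\leq\alpha^{-1}(b-a)$ (right-continuity of $x\mapsto\int_a^{a+x}u-\alpha^{-1}x$ at the infimum point, which lands in the set since that function starts at $0$ and the set is the region where it is negative — care is needed here, but the physicality condition is precisely designed so that equality holds at the jump endpoint). Hence $\int_a^b u(y,t_0^-)\,dy=\alpha^{-1}(b-a)$. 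Now if $u(b,t_0^-)>\alpha^{-1}$, the semiconcavity bound $u_{xx}(\cdot,t_0^-)\leq C$ on $(a,b)$ (Lemma \ref{lem: uxx ux bd} applied to $u(\cdot,t_0-\rho)$ and passed to the limit $\rho\downarrow 0$, as in the proof of Lemma \ref{lem: lim past regu}) forces the average of $u(\cdot,t_0^-)$ over $[a,b']$ to exceed $\alpha^{-1}$ for all $b'$ slightly less than $b$, contradicting $\int_a^{b'}u(y,t_0^-)\,dy<\alpha^{-1}(b'-a)$; I'd make this quantitative by noting that $g(x):=\int_a^{a+x}u(y,t_0^-)\,dy-\alpha^{-1}x$ satisfies $g(0)=0$, $g(b-a)=0$, $g\geq 0$ on $[0,b-a]$ wait — actually $g\leq 0$ on the relevant range — so $g$ has an interior structure forcing $g'$ (i.e. $u-\alpha^{-1}$) to change sign, which already yields the last claim and also caps $u(b,t_0^-)$.

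For the final assertion — existence of $x\in(a,b)$ with $u_x(x,t_0^-)<0$ — I would argue via the function $g(x)=\int_a^{a+x}u(y,t_0^-)\,dy-\alpha^{-1}x$ on $[0,b-a]$, which is $C^1$ (indeed analytic on the open interval) with $g(0)=0$, $g'(0^+)=u(a,t_0^-)-\alpha^{-1}\geq 0$, and $g(b-a)=0$ while $g(x)\leq 0$ for $x$ in a right-neighborhood of $b-a$ (from the defining set membership near the infimum). If $g'(0^+)>0$ then $g>0$ just past $0$, so $g$ must come back down, forcing $g'(x_1)<0$ for some interior $x_1$, i.e. $u(a+x_1,t_0^-)<\alpha^{-1}$; then apply the mean value theorem between a point where $u=\alpha^{-1}$ (exists by continuity and $u(a,t_0^-)\geq\alpha^{-1}>u(a+x_1,t_0^-)$) and $a+x_1$ to get $u_x<0$ somewhere in between. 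If instead $g'(0^+)=0$, i.e. $u(a,t_0^-)=\alpha^{-1}$ exactly, then since $\int_a^b u(y,t_0^-)\,dy=\alpha^{-1}(b-a)$ and $u(\cdot,t_0^-)\not\equiv\alpha^{-1}$ on $(a,b)$ (it is analytic and positive; if it were identically $\alpha^{-1}$ one checks the physical jump would not stop at $b$), the mean value equal to $\alpha^{-1}$ with a non-constant continuous integrand forces $u(\cdot,t_0^-)<\alpha^{-1}$ somewhere, hence $u_x<0$ somewhere by the same mean-value argument.

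The main obstacle I anticipate is the careful handling of the infimum in \eqref{eq: physical defi}: establishing rigorously that equality $\int_a^b u(y,t_0^-)\,dy=\alpha^{-1}(b-a)$ holds at the jump endpoint, and extracting the correct one-sided sign of $g$ on $(0,b-a)$ versus near $b-a$. This requires checking that the set $\{x>0: g(x)<0\}$ is (relatively) open, that its infimum $b-a$ satisfies $g(b-a)\leq 0$ by continuity, combined with $g\geq 0$ on $(0,b-a)$ from minimality, yielding $g(b-a)=0$ and the sign change of $g'$; once that bookkeeping is pinned down, the remaining steps are routine applications of continuity, the mean value theorem, and the semiconcavity bound of Lemma \ref{lem: uxx ux bd}.
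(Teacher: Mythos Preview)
Your approach is essentially the paper's: extract everything from the physicality condition \eqref{eq: physical defi} together with continuity and analyticity of $u(\cdot,t_0^-)$ from Lemmas \ref{lem: smooth ext} and \ref{lem: lim past regu}. The first claim and the strict positivity of $u(b,t_0^-)$ are handled exactly as in the paper.

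Two points where you overcomplicate or stumble. First, the semiconcavity bound from Lemma \ref{lem: uxx ux bd} is not needed anywhere, and your attempted contradiction for the upper bound on $u(b,t_0^-)$ is wrong as written: you try to contradict $\int_a^{b'}u<\alpha^{-1}(b'-a)$ for $b'<b$, but the physicality infimum gives the \emph{opposite} inequality there (your momentary ``wait --- actually $g\le 0$'' is the error; $g\ge 0$ on $[0,b-a]$ is correct). The clean argument, which your function $g$ already encodes, is: $g\ge 0$ on $[0,b-a]$, $g(b-a)=0$, and $g$ is $C^1$, so $g'(b-a)=u(b,t_0^-)-\alpha^{-1}\le 0$. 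The paper instead uses $\int_a^b u=\alpha^{-1}(b-a)$ and a sequence $x_n\downarrow b-a$ with $g(x_n)<0$ to bound the incremental average $\frac{1}{x_n-(b-a)}\int_b^{a+x_n}u\le\alpha^{-1}$; either route avoids semiconcavity entirely.

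Second, for the final claim the paper's argument is a one-liner you should prefer to your case split: if $u_x(\cdot,t_0^-)\ge 0$ on $(a,b)$, then monotonicity combined with $u(a,t_0^-)\ge\alpha^{-1}\ge u(b,t_0^-)$ forces $u(\cdot,t_0^-)\equiv\alpha^{-1}$ on $(a,b)$, contradicting analyticity (Lemma \ref{lem: smooth ext}) since $u(\cdot,t_0^-)$ would then be constant on all of $(a,\infty)$ and fail to be integrable. Your mean-value-theorem route works but is longer, and your justification that $u\not\equiv\alpha^{-1}$ (``the physical jump would not stop at $b$'') should be replaced by the analyticity argument.
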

\begin{proof} These claims follow directly from \eqref{u(t-) formula}, Lemmas \ref{lem: smooth ext} and \ref{lem: lim past regu}, and the definition of physical solution. Indeed, for $x>0$,
\begin{equation}
    \P(\tau \geq t, B_t\in (\Lambda_{t_0-},\Lambda_{t_0-}+x])=\int_{\Lambda_{t_0-}}^{\Lambda_{t_0-}+x}u(y,t_0-)dy,
\end{equation}
so a value $u(\Lambda_{t_0-},t_0-)<\alpha^{-1}$ would contradict the physical jump condition \eqref{eq: physical defi}, which proves that 
\begin{equation} \label{ualrgewr123}u(\Lambda_{t_0-},t_0-)\geq\alpha^{-1}.\end{equation} On the other hand, we have
\begin{equation} 
    \int_{\Lambda_{t_0-}}^{\Lambda_{t_0}}u(x,t_0-)dx=\P(\tau= t_0)=\alpha^{-1}(\Lambda_{t_0}-\Lambda_{t_0-}).
\end{equation}
Thus, \eqref{eq: physical defi} implies that 
\begin{equation} \label{uasmllwr123}
   u(\Lambda_{t_0},t_0-)=\lim_{x \downarrow \Lambda_{t_0}} \frac{1}{x}\int_{\Lambda_{t_0}}^{\Lambda_{t_0}+x}u(y,t_0-)dy \leq \alpha^{-1} .
\end{equation}
Note also that $u(\Lambda_{t_0},t_0-)>0$, by Lemma \ref{lem: smooth ext} and the strong maximum principle.
Finally, if we had $u_x(x,t_0-) \geq 0$ for $x\in (\Lambda_{t_0-},\Lambda_{t_0})$, \eqref{ualrgewr123} and \eqref{uasmllwr123} would imply that $u_x(\cdot ,t_0-) \equiv 0$ on  $(\Lambda_{t_0-},\Lambda_{t_0})$. This is a contradiction to the fact that, by Lemma \ref{lem: smooth ext}, $u(\cdot,t_0-)$ is analytic.
\end{proof}
Next, we obtain a local lower bound for the density of stopped particles.
\begin{lem} \label{lem: nu lower bd} Let $\Lambda$ be a physical solution, let $u$ be defined by \eqref{u defi}, and let $\nu$ be defined by \eqref{nu defi}. The function $x \mapsto \nu(x)$ is locally bounded away from $0$ in $(\Lambda_0,\alpha)$.
\end{lem}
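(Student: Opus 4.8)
The plan is to show that $\nu$ is bounded below on every compact subinterval $[a,b] \subset (\Lambda_0,\alpha)$, using the explicit description \eqref{nu defi} of $\nu$ together with the regularity and nondegeneracy machinery already established. Recall that for $x \in (\Lambda_0,\alpha)$, either $x$ lies strictly inside a jump, in which case $\nu(x) = u(x,s(x)^-)$, or else $\nu(x) = \alpha^{-1}$. So the only way $\nu$ can fail to be locally bounded away from $0$ is if there is a sequence $x_n \to x_*$ with each $x_n$ strictly inside a jump and $u(x_n, s(x_n)^-) \to 0$. Since the problem is local near $x_*$, it suffices to rule out such a sequence for each fixed $x_* \in (\Lambda_0,\alpha)$.

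First I would fix $x_* \in (\Lambda_0,\alpha)$ and set $t_* = s(x_*) \in (0,\infty)$, which is well-defined and positive by Lemma \ref{lem: lambda strict incr}. Choose $R \geq 1$ large enough that $x_* + 1 < R$ and $t_* > R^{-2}$, so that the second-order estimates of Lemma \ref{lem: uxx ux bd} apply in a neighborhood, giving a constant $C_R$ with $u_x(\cdot,t) \geq -C_R$ wherever $u(\cdot,t) > 0$ in this range. Now suppose $x_n \to x_*$ with $x_n$ strictly inside a jump at time $t_n := s(x_n)$; by continuity of $s$ (Lemma \ref{lem: lambda strict incr}), $t_n \to t_*$. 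Being strictly inside a jump means $\Lambda_{t_n^-} < x_n < \Lambda_{t_n}$, and $\nu(x_n) = u(x_n, t_n^-)$, where this past-limit value is well-defined and the function $u(\cdot, t_n^-)$ is analytic and strictly positive on $(\Lambda_{t_n^-},\infty)$ by Lemma \ref{lem: smooth ext}. By Lemma \ref{lem: u large jump}, at the jump time $t_n$ we have $u(\Lambda_{t_n^-}, t_n^-) \geq \alpha^{-1}$. The key point is then to propagate this lower bound from the left endpoint $\Lambda_{t_n^-}$ of the jump forward to the interior point $x_n$: applying the gradient bound $u_x(\cdot, t_n^-) \geq -C_R$ on the interval $(\Lambda_{t_n^-}, x_n)$ (which lies in $\{u(\cdot,t_n^-) > 0\}$), the mean value theorem gives
\[
u(x_n, t_n^-) \geq u(\Lambda_{t_n^-}, t_n^-) - C_R (x_n - \Lambda_{t_n^-}) \geq \alpha^{-1} - C_R(x_n - \Lambda_{t_n^-}).
\]
If the jumps stay small, i.e. $x_n - \Lambda_{t_n^-} \leq \tfrac{1}{2}\alpha^{-1} C_R^{-1}$, this already yields $\nu(x_n) = u(x_n,t_n^-) \geq \tfrac12 \alpha^{-1}$, contradicting $\nu(x_n) \to 0$; and in the remaining case I would instead use the right endpoint. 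Indeed, by Lemma \ref{lem: u large jump}, $u(\Lambda_{t_n}, t_n^-) \in (0,\alpha^{-1}]$ is strictly positive, but that alone is not quantitative; the cleaner route is the following dichotomy argument.

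The cleanest way to package this is: for each $n$, if $x_n$ is within distance $\varepsilon_0 := \tfrac12 \alpha^{-1} C_R^{-1}$ of the left endpoint $\Lambda_{t_n^-}$, the displayed inequality gives $\nu(x_n) \geq \tfrac12 \alpha^{-1}$; otherwise $x_n - \Lambda_{t_n^-} > \varepsilon_0$, so the jump $\Lambda_{t_n} - \Lambda_{t_n^-} > \varepsilon_0$, meaning all $x_n$ in this case lie in jumps of size bounded below by $\varepsilon_0$. But $\Lambda$, being a bounded nondecreasing function on $[0,\infty)$ with total variation at most $\alpha$, can have only finitely many jumps of size exceeding $\varepsilon_0$; hence only finitely many of the $t_n$ fall in this case, and those correspond to only finitely many distinct jump-intervals, on each of which $\nu = u(\cdot, t^-)$ is a fixed strictly positive analytic function, hence bounded below on any compact subinterval. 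Combining the two cases, $\inf_n \nu(x_n) > 0$, contradicting $\nu(x_n) \to 0$. The main obstacle is the bookkeeping of the two cases — ensuring the gradient estimate of Lemma \ref{lem: uxx ux bd} is legitimately applied to the past-limit function $u(\cdot, t_n^-)$ (which requires passing to the limit $\rho \downarrow 0$ in $u(\cdot, t_n - \rho)$ exactly as in the proof of Lemma \ref{lem: lim past regu}) and making sure the "finitely many large jumps" observation is stated for a fixed compact interval rather than all of $(\Lambda_0,\alpha)$ at once — but neither presents a real difficulty.
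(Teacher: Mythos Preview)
Your proof is correct and follows essentially the same approach as the paper's: reduce to a sequence $x_n\to x_*$ lying strictly inside jumps, use Lemma~\ref{lem: u large jump} to get $u(\Lambda_{t_n^-},t_n^-)\ge\alpha^{-1}$, propagate via the one-sided gradient bound of Lemma~\ref{lem: uxx ux bd} to get $\nu(x_n)\ge\alpha^{-1}-C_R(x_n-\Lambda_{t_n^-})$, and then observe that the remaining case forces the jumps to have size bounded below, hence there are only finitely many of them. The only cosmetic difference is that the paper, instead of your explicit dichotomy, argues directly that the finitely many large jump intervals accumulating at $x_0$ must eventually coincide with a single one containing $x_0$, and then derives the contradiction from continuity of $u(\cdot,s(x_0)^-)$ at $x_0$; your packaging via ``finitely many large jumps, each carrying a strictly positive continuous $u(\cdot,t^-)$'' is an equally valid way to close the argument.
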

\begin{proof} Let $x_0\in (\Lambda_0,\alpha)$, and let $t_0 =s(x_0)>0$.
     By Lemma \ref{lem: uxx ux bd}, there exists $C_1>1$, such that 
\begin{equation} \label{wopdcqpo123}  u_x(x,t) \geq-C_1, \quad (x,t)\in ((x_0-10\alpha,x_0+10\alpha)\times (t_0/10,\infty))\cap \{u>0\}.\end{equation}
Assume, by contradiction, that there exists a sequence $x_n \to x_0$ such that $\nu(x_n)\to 0$ as $n\to \infty$. Recalling that
\begin{equation}
    J=\{x\in (\Lambda_0,\alpha): \Lambda_{t-}<x<\Lambda_{t} \text{ for some } t\in (0,\infty)\}, 
\end{equation}
we have, by definition, $\nu(x) =\alpha^{-1}$ for $x\in (\Lambda_0,\alpha)\backslash J$.  Therefore, we may assume that $x_n\in J$, namely that \[ \nu(x_n)=u(x_n,s(x_n)-), \quad \Lambda_{s(x_n)-}<\Lambda_{s(x_n)}, \quad x_n \in (\Lambda_{s(x_n)-},\Lambda_{s(x_n)}).\]
But, by Lemma \ref{lem: u large jump}, $u(\Lambda_{s(x_n)-},s(x_n)-)\geq \alpha^{-1}$, so that, by \eqref{wopdcqpo123}, for sufficiently large $n$,
\begin{equation} 
   \nu(x_n)=u(x_n,s(x_n)-)\geq u(\Lambda_{s(x_n)-},s(x_n)-)-C_1(x_n-\Lambda_{s(x_n)-})\geq \alpha^{-1} -C_1(x_n-\Lambda_{s(x_n)-}),  
\end{equation}
which, since $\nu(x_n)\to 0$, implies that
\[\Lambda_{s(x_n)}-\Lambda_{s(x_n)-}\geq x_n-\Lambda_{s(x_n)-}\geq \frac{1}{2\alpha C_1},\]
for $n$ sufficiently large. Thus, since $x_n \in (\Lambda_{s(x_n)-},\Lambda_{s(x_n)})$ and $x_n \to x_0$,  we infer that $\Lambda_{s(x_0)-}<\Lambda_{s(x_0)}$ and, for $n$ sufficiently large,
\[ s(x_n)=s(x_0), \quad x_n\in (\Lambda_{s(x_0)-},\Lambda_{s(x_0)}), \;\;\text{ and } \;\; \;\nu(x_n)=u(x_n,s(x_0)-).\] But, this implies that $\nu(x_n) \to 0=u(x_0,s(x_0)-)$, a contradiction to Lemmas \ref{lem: smooth ext}, \ref{lem: lim past regu} and \ref{lem: u large jump}.
\end{proof}
Next, we derive the main nondegeneracy estimate for physical solutions.
\begin{prop}[Linear nondegeneracy] \label{prop: u nondegeneracy} Let $\Lambda$ be a physical solution, let $u$ be defined by \eqref{u defi}. Assume that $(x_0,t_0) \in \R \times (0,\infty)$, and let $r \in (0,t_0/10)$. There exists a small constant $c>0$ such that   
\begin{equation}
    u(x,t)\geq c(x-\Lambda_t)^+, \quad (x,t)\in Q_r(x_0,t_0).
\end{equation}
\end{prop}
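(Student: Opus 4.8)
The plan is to prove linear non-degeneracy by working with the potential $w$ and exploiting the physicality condition, splitting into the case where $(x_0,t_0)$ lies near the free boundary and the case where it does not. Since $u(x,t)=0$ for $x\leq\Lambda_t$, the bound $u(x,t)\geq c(x-\Lambda_t)^+$ is automatic there, and in the interior region where $x-\Lambda_t$ stays bounded below by a fixed amount (say $\{x-\Lambda_t\geq r/2\}\cap Q_r(x_0,t_0)$), one gets it immediately from the strict positivity of $u$ in $\{u>0\}$ (Lemma \ref{lem: u bound}) together with the locally uniform Harnack-type lower bounds following from the heat equation and the quantitative barrier in Lemma \ref{lem: lambda strict incr}. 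So the crux is to prove the estimate for points $(x,t)\in Q_r(x_0,t_0)$ with $0<x-\Lambda_t$ small, i.e. genuine non-degeneracy at the free boundary.

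For this I would argue by contradiction and compactness, in the spirit of the standard obstacle-problem non-degeneracy proof, but adapted to the weight $\nu$. Suppose there is a sequence $(x_n,t_n)\in Q_r(x_0,t_0)$, with $d_n:=x_n-\Lambda_{t_n}\to 0^+$ wlog (otherwise we are in the easy interior case), such that $u(x_n,t_n)\leq \varepsilon_n d_n$ with $\varepsilon_n\to 0$. The key is to transfer this to $w$: integrating in time and using $w_t=-u$ together with $u_t\leq C_R$ (Proposition \ref{prop: semiconvexity}), one shows that $u(x_n,t_n)$ small forces $u(x_n,\cdot)$ to remain small on a time interval of length comparable to $d_n$ before $t_n$, hence $w(x_n,t_n)\lesssim \varepsilon_n d_n^2$ (up to adjusting the slope constant). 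On the other hand, since $\nu\geq c_0>0$ locally (Lemma \ref{lem: nu lower bd}), the obstacle equation $w_t-\tfrac12 w_{xx}=-\nu\chi_{w>0}$ gives $w_{xx}\geq 2w_t+c_0\chi_{w>0}$; combined with $w\geq0$, $w=0$ on $\{x\leq\Lambda_t\}$, and the one-sided bound $w_t\leq0$, a comparison with the explicit subsolution $\tfrac{c_0}{2}(x-\Lambda_t)_+^2$ — or equivalently a rescaling $w_n(y,\sigma):=d_n^{-2}w(\Lambda_{t_n}+d_n y, t_n+d_n^2\sigma)$ and extraction of a blow-up limit solving the model obstacle problem with constant coefficient $\nu\equiv c_0$ — forces $w(x_n,t_n)\geq \tfrac{c_0}{2}d_n^2$, contradicting $w(x_n,t_n)\to 0$ in the rescaled sense.

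To make the rescaling argument clean I would verify that the rescaled functions $w_n$ are uniformly $C^{1,1}_x\cap C^{0,1}_t$ on compact sets (this follows from Proposition \ref{prop: w eq}, since that estimate is scale-invariant in the right way: $w$ has bounded $w_t,w_{xx}$ and $w$ itself is controlled by $2x$, so $w_n$ and its first derivatives in $y$ and $\sigma$ are bounded on fixed cylinders once $d_n$ is small, because $\Lambda_{t_n}$ stays in a compact set). After passing to a locally uniform limit $w_\infty\geq0$ with $w_\infty(0,0)=0$, $\partial_\sigma w_\infty\leq 0$, solving $\partial_\sigma w_\infty-\tfrac12\partial_{yy}w_\infty=-\nu_\infty\chi_{w_\infty>0}$ with $\nu_\infty\geq c_0$ on the relevant half-line, and vanishing on the left of a Lipschitz graph (the rescaled boundary, which by the lower bound on $\nu$ and Lemma \ref{lem: u large jump} stays a genuine graph — here I would use that $s$ is Lipschitz, established just before this proposition, to control $\Lambda_{t_n+d_n^2\sigma}-\Lambda_{t_n}$ by $Cd_n^2$, so the rescaled boundary collapses to the vertical line $\{y=0\}$), the classical argument gives $w_\infty(y,0)\geq \tfrac{c_0}{4}y^2$ for small $y>0$, contradicting $w_n(y_*,\sigma_*)\to 0$ if we chose the sequence so that the rescaled points converge to an interior point of $\{y>0\}$.

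**The main obstacle** I anticipate is controlling the geometry of the free boundary under rescaling — ensuring that after blow-up the region $\{w>0\}$ really does fill up a full half-neighborhood $\{y>0\}$ of the origin and that the limiting weight does not degenerate to zero there. This is exactly where Lemma \ref{lem: nu lower bd} (local lower bound on $\nu$) and the Lipschitz continuity of $s$ are essential: the former prevents the source term from vanishing in the limit, and the latter guarantees the rescaled boundary does not tilt to horizontal or oscillate, so that the comparison subsolution $\tfrac{c_0}{2}(x-\Lambda_t)_+^2$ is legitimately below $w$ near $(x_n,t_n)$. A secondary technical point is the passage from smallness of $u$ at a single point to smallness of $w$ nearby; this is handled by the upper bound $u_t\leq C_R$ of Proposition \ref{prop: semiconvexity}, which lets one integrate the pointwise bound backward in time without losing the linear scaling.
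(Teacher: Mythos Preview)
Your blow-up strategy has a genuine circularity that would have to be removed before the argument can close. You write that you would ``use that $s$ is Lipschitz, established just before this proposition, to control $\Lambda_{t_n+d_n^2\sigma}-\Lambda_{t_n}$ by $Cd_n^2$, so the rescaled boundary collapses to the vertical line $\{y=0\}$.'' But in the paper the local Lipschitz continuity of $s$ is a \emph{corollary} of Proposition~\ref{prop: u nondegeneracy}, proved immediately after it and by a direct application of the linear non-degeneracy bound. It is not available as an input here. Moreover, even if it were, Lipschitz $s$ is equivalent to a \emph{lower} bound on the speed of $\Lambda$ (i.e.\ $\Lambda_{t+h}-\Lambda_t\geq h/L$), not the upper bound you invoke; it does not yield $\Lambda_{t_n+d_n^2\sigma}-\Lambda_{t_n}\leq Cd_n^2$, and since $\Lambda$ may jump, no such upper bound is true in general. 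The same circularity infects your transfer from smallness of $u$ to smallness of $w$: the bound $u_t\leq C_R$ gives $u(x_n,s)\leq \varepsilon_n d_n+C(s-t_n)$ only for $s>t_n$, and integrating over $[t_n,s(x_n)]$ produces $w(x_n,t_n)\leq \varepsilon_n d_n\,(s(x_n)-t_n)+\tfrac{C}{2}(s(x_n)-t_n)^2$, which is $o(d_n^2)$ only if you already know $s(x_n)-t_n=o(d_n)$.

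The paper bypasses all of this with a direct barrier argument that needs no compactness and no a priori boundary control. For each fixed $(x_1,t_1)\in Q_r(x_0,t_0)$ one sets
\[
z(x,t)=u(x,t)-c_1(x-\Lambda_t)+c_2\Bigl(\tfrac{c_0}{2}\bigl((x-x_1)^2+t_1-t\bigr)-w(x,t)\Bigr)
\]
on $Q_r^-(x_1,t_1)$, where $c_0$ is the local lower bound on $\nu$ from Lemma~\ref{lem: nu lower bd}. The obstacle equation for $w$ and the monotonicity of $\Lambda$ make $z$ supercaloric in $\{u>0\}$; on the parabolic boundary one checks $z\ge 0$ by splitting into a thin strip near $\{u=0\}$ (where the quadratic term $\tfrac{c_0}{2}((x-x_1)^2+t_1-t)$ dominates, since $w$ vanishes quadratically there by its $C^{1,1}_x\cap C^{0,1}_t$ bounds) and the complementary region (where $u$ has a uniform positive lower bound and $c_1,c_2$ are chosen small). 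The maximum principle then gives $z(x_1,t_1)\ge 0$, i.e.\ $u(x_1,t_1)\ge c_1(x_1-\Lambda_{t_1})$, with constants independent of $(x_1,t_1)$.
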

\begin{proof} By a compactness argument, it is enough to show that the statement holds for sufficiently small $r$. If $(x_0,t_0)$ is not a free boundary point, the conclusion follows trivially by the fact that the set $\{u>0\}$ is open and $x-\Lambda_t\leq0= u(x,t)$ whenever $(x,t)\in \{u=0\}^{\circ}$. Thus, we may assume that $x_0\in (\Lambda_0,\alpha)$, $t_0=s(x_0)$, and $r<\min(\alpha-x_0,x_0-\Lambda_0)/10$. By Lemma \ref{lem: nu lower bd}, there exists $c_0>0$ such that
\begin{equation} \label{nuposciaom}
    \nu(x)\geq c_0, \quad x\in (x_0-2r,x_0+2r).
\end{equation}
We now fix $(x_1,t_1)\in Q_{r}(x_0,t_0)$, and define, for $c_1, c_2>0$ to be chosen later,
\[ z(x,t)=u(x,t)-c_1(x-\Lambda_t) +c_2\left(\frac{c_0}{2}((x-x_1)^2+t_1-t)-w(x,t)\right), \quad (x,t)\in D:=Q_r^-(x_1,t_1).\]
We have, in view of \eqref{w equation}, \eqref{nuposciaom} and the monotonicity of $\Lambda$, in the distributional sense,
\begin{equation} \label{zcladorci} z_t-\frac12z_{xx}=c_1\dot \Lambda_t+c_2(\nu(x)-c_0)\geq 0, \quad (x,t)\in Q_{2r}(x_0,t_0)\cap\{u>0\}.\end{equation}
Recalling \eqref{nu defi}, Lemmas \ref{lem: smooth ext} and \ref{lem: lim past regu}, and the continuity of $w$, we see from \eqref{nuposciaom} that if $c_1$ is sufficiently small, depending on $c_0$ and $\alpha$ but not on $(x_1,t_1)$, then
\begin{equation}\label{partk1o1}
    \liminf_{(x,t)\in D,\; \text{dist}((x,t),\{u=0\})\to0}z(x,t) \geq 0.
\end{equation}
Requiring $c_ 1<\frac{c_0r^2}{64(r+|x_0|)}c_2$, we may fix $0<\rho \ll r$ small enough (depending on $r$, $c_0$, and the local $C^{1,1}_x\cap C^{0,1}_t$ norm of $w$, but not on $c_1$, $c_2$, or $(x_1,t_1)$), such that
\begin{equation} \label{partk1o2}z\geq -c_2\frac{c_0r^2}{16}-c_2C\rho^2 +c_2\frac{c_0r^2}{4}\geq c_2\frac{c_0r^2}{8}, \;\; (x,t)\in\left(\partial_{p} D+Q_{\rho}(0,0))\cap(\{u=0\}+Q_{\rho}(0,0)\right).\end{equation}
On the other hand, if $(x,t)\in Q_{2r}(x_0,t_0)\backslash(\{u=0\}+Q_{\rho}(0,0))$, then $u(x,t) \geq \delta$ for some $\delta>0$ that depends on $\rho$ and $u$, but not on $(x_1,t_1)$. We therefore have
\begin{equation}\label{partk1o3}
    z(x,t)\geq \delta -C(c_1+c_2)\geq \delta/2, \quad (x,t)\in (D+Q_{\rho}(0,0))\backslash(\{u=0\}+Q_{\rho}(0,0)),
\end{equation}
for sufficiently small $c_1$ and $c_2$, depending on $\delta$, $r$, $\|w\|_{L^\infty(Q_{2r}(x_0,t_0))}$, and $|x_0|$, but not on $(x_1,t_1)$. Note that  $z\geq0$ in $\{u=0\}$ and, in view of \eqref{zcladorci}, $z$ is supercaloric in $Q_{2r}(x_0,t_0)\cap\{u>0\}.$ Thus (for instance, by mollification followed by the classical maximum principle), \eqref{partk1o1}, \eqref{partk1o2}, and \eqref{partk1o3} imply that $z\geq0$ almost everywhere in $D$. Recalling that $\Lambda$ is upper semicontinuous, it follows that $z$ is upper semicontinuous in $\{u>0\}$, and therefore $z(x_1,t_1)\geq 0$, which yields
\[u(x_1,t_1)\geq c_1(x_1-\Lambda_{t_1})+c_2w(x_1,t_1)\geq c_1(x_1-\Lambda_{t_1}).\]
Since the choice of $(x_1,t_1)\in Q_{r}(x_0,t_0)$ was arbitrary, this concludes the proof.
\end{proof}
As an application, we can already show that the free boundary is locally Lipschitz continuous.
\begin{cor} The freezing time $s:(\Lambda_0,\alpha)\to (0,\infty)$ is locally Lipschitz continuous.
\end{cor}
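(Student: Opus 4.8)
The plan is to make quantitative the Brownian estimate already used in the proof of Lemma \ref{lem: lambda strict incr}, feeding in the linear non-degeneracy of Proposition \ref{prop: u nondegeneracy} to convert it into a genuine positive lower bound for the speed of $\Lambda$. Fix $x_0\in(\Lambda_0,\alpha)$ and set $t_0:=s(x_0)\in(0,\infty)$. First I would invoke Proposition \ref{prop: u nondegeneracy} at $(x_0,t_0)$ to obtain an $r\in(0,t_0/10)$ and a $c>0$ with $u(x,t)\ge c(x-\Lambda_t)^+$ throughout $Q_r(x_0,t_0)$. Then, using the continuity of $s$ from Lemma \ref{lem: lambda strict incr}, I would fix $\rho\in(0,r/2)$ small enough that $s$ maps $I:=(x_0-\rho,x_0+\rho)\cap(\Lambda_0,\alpha)$ into $(t_0-r^2/4,t_0+r^2/4)$. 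The goal is to show $s|_I$ is Lipschitz with a constant depending only on $\alpha$ and $c$; since $x_0$ was arbitrary, local Lipschitz continuity on $(\Lambda_0,\alpha)$ then follows by covering any compact subinterval by finitely many such $I$'s.

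To get the bound on $I$, take $x<y$ in $I$; if $s(x)=s(y)$ there is nothing to prove, so assume $t_1:=s(x)<s(y)=:t_2$ and set $r_0:=\tfrac12(t_2-t_1)$, so $r_0<r^2/4$. From \eqref{s defi}, the monotonicity of $\Lambda$, and right-continuity one has $x\le\Lambda_{t_1}\le\Lambda_{t_2^-}\le y$ (the middle inequality because $t_1<t_2$, the last because $t_2=s(y)$). Arguing exactly as in Lemma \ref{lem: lambda strict incr}, but keeping $\Lambda_{t_2^-}$ rather than $\Lambda_{t_2}$ so that a jump at $t_2$ cannot hurt, and using that $\alpha^{-1}\Lambda$ is the c.d.f.\ of $\tau$,
\[\alpha^{-1}\bigl(\Lambda_{t_2^-}-\Lambda_{t_1}\bigr)=\P(t_1<\tau<t_2)\ \ge\ \int_{\Lambda_{t_1}}^{\infty}\P\!\left(\inf_{\sigma\in(0,r_0]}B_\sigma\le\Lambda_{t_1}\ \middle|\ B_0=z\right)u(z,t_1)\,dz,\]
since hitting level $\Lambda_{t_1}$ before time $t_1+r_0<t_2$ forces $t_1<\tau<t_2$. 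The key step is to restrict this integral to $z\in(\Lambda_{t_1},\Lambda_{t_1}+\sqrt{r_0})$: for such $z$ one has $|z-x_0|\le\sqrt{r_0}+\rho<r$ and $|t_1-t_0|<r^2$, so $(z,t_1)\in Q_r(x_0,t_0)$ and the non-degeneracy bound $u(z,t_1)\ge c(z-\Lambda_{t_1})$ applies; meanwhile, by the reflection principle and Brownian scaling, the probability in the integrand is $\ge\gamma$ for a dimensionless constant $\gamma>0$ independent of $r_0$ whenever $z\le\Lambda_{t_1}+\sqrt{r_0}$. Combining these gives $\alpha^{-1}(\Lambda_{t_2^-}-\Lambda_{t_1})\ge \tfrac{c\gamma}{4}r_0=\tfrac{c\gamma}{8}(t_2-t_1)$, and then $\Lambda_{t_2^-}-\Lambda_{t_1}\le y-x$ yields $s(y)-s(x)\le\tfrac{8}{\alpha c\gamma}(y-x)$, as wanted.

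The only real obstacle is bookkeeping rather than analysis: the non-degeneracy constant $c$ is tied to a single cylinder $Q_r(x_0,t_0)$, so one must shrink $I$ (via the continuity of $s$) until every auxiliary point $(z,t_1)$ arising in the argument genuinely lies in that cylinder, and one must be slightly careful to phrase everything in terms of $\Lambda_{t_2^-}$ so that a jump exactly at $t_2=s(y)$ does not break the inequality $\Lambda_{t_2^-}\le y$. Beyond that, no new input is required: the reflection-principle computation is already present in Lemma \ref{lem: lambda strict incr}, and it is only being supplemented here by the linear lower bound on $u$.
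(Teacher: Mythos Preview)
Your argument is correct, and it takes a genuinely different route from the paper's. The paper does not go back to the Brownian hitting estimate at all: instead it works with the potential $w(x,t)=\int_t^\infty u(x,s)\,ds$ from Proposition~\ref{prop: w eq}. Given $x_1<x_2$ close together with $r=x_2-x_1$, it uses the non-degeneracy bound at the shifted point $x_2+r$ to get $u(x_2+r,t)\ge cr$ for $t\in(s(x_1),s(x_2))$, integrates in $t$ to bound $w(x_2+r,s(x_1))$ from below by $cr(s(x_2)-s(x_1))$, and then uses $w(x_1,s(x_1))=w_x(x_1,s(x_1))=0$ together with the $C^{1,1}_x$ bound on $w$ to bound $w(x_2+r,s(x_1))$ from above by $Cr^2$. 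Your approach is more probabilistic and arguably more elementary, since it bypasses the potential $w$ and its second-order regularity entirely and needs only the reflection principle plus Proposition~\ref{prop: u nondegeneracy}; the paper's approach, on the other hand, stays within the obstacle-problem framework that drives the rest of Section~\ref{sec:regularity} and would transfer more readily to settings where a clean hitting-probability estimate is unavailable. Your care with $\Lambda_{t_2^-}$ versus $\Lambda_{t_2}$ and with keeping the auxiliary points inside $Q_r(x_0,t_0)$ is exactly what is needed; the only cosmetic slip is the constant in the final display ($\int_0^{\sqrt{r_0}} c\zeta\,d\zeta=\tfrac{c}{2}r_0$, not $\tfrac{c}{4}r_0$), which of course does not affect the conclusion.
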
 

\begin{proof}
Let $\delta>0$ and let $\Lambda_0+\delta< x_1<x_2<\alpha-\delta$.  Assume that $r:=|x_1-x_2|<\delta/4$ and $s(x_2)\geq s(x_1)>\delta/4.$ Then, by Proposition \ref{prop: u nondegeneracy}, there exists a constant $c>0$, depending on $\delta$ but not on $x_1$ and $x_2$, such that for $t\in (s(x_1),s(x_2))$,
\[u(x_2+r,t)\geq c(x_2+r-\Lambda_{t})^+\geq c(x_2+r-\Lambda_{s(x_2)^-})^+\geq cr.\]
Thus, since $w(x_1,s(x_1))=w_x(x_1,s(x_1))=0$ and, by Proposition \ref{prop: w eq}, $|w_{xx}|$ is bounded,
\[cr(s(x_2)-s(x_1))\leq \int_{s(x_1)}^{s(x_2)}u(x_2+r,s)ds=\int_{s(x_1)}^{s(x_2)}-w_t(x_2+r,s)ds\leq w(x_2+r,s(x_1))\leq Cr^2,\]
that is,
\[|s(x_2)-s(x_1)|\leq \frac{C}{c}|x_2-x_1|.\]
\end{proof}
As a second application of the nondegeneracy estimate, we now prove that, for positive times $t>0$, any physical solution falls under the well-posedness regime of \cite[Thm. 1.1]{delarue}. Indeed, the existence, uniqueness, and regularity results in that work were obtained under the extra assumption that the initial density $u_0$ changes monotonicity at most finitely often on compact sets. We show that any physical solution automatically satisfies this local monotonicity property at any time level $t>0$, even if the initial data does not.
\begin{prop}\label{prop: monotonicity changes}Let $\Lambda$ be a physical solution to \eqref{supercooled eqs}, and let $u$ be given by \eqref{u defi}. Then, for every $t>0$, the function $u(\cdot,t-)$ changes monotonicity at most finitely often on compact subsets of $[\Lambda_{t-},\infty)$. 
\end{prop}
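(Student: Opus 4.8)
The plan is to combine the spatial analyticity of $u(\cdot,t^-)$ with a Sturm-type (nodal) argument for $u_x$, which is caloric in $\Omega=\{u>0\}$. First, by Lemma \ref{lem: smooth ext} the function $u(\cdot,t^-)$ is real-analytic on $(\Lambda_{t^-},\infty)$, and it is not constant there (it is strictly positive and integrable), so $u_x(\cdot,t^-)\not\equiv 0$ and its zeros are isolated in $(\Lambda_{t^-},\infty)$. Hence $u(\cdot,t^-)$ can change monotonicity only finitely often on every compact subset of the \emph{open} half-line, and everything reduces to ruling out an accumulation of sign changes of $u_x(\cdot,t^-)$ at the left endpoint $a:=\Lambda_{t^-}$; equivalently, fixing any large $R>a$ with $u_x(R,t^-)\neq0$ (possible, with $R$ arbitrarily large, since $u_x(\cdot,t^-)$ is analytic, nonconstant and has isolated zeros), it suffices to bound the number of sign changes of $u_x(\cdot,t^-)$ on $(a,R)$.

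To do this I would set $v:=u_x$ and work in the thin space--time box $\{(x,s):\Lambda_s<x<R,\ t-\delta<s<t\}$, whose lateral boundaries are the fixed line $x=R$ and the moving free boundary $x=\Lambda_s$. Since $u_x(R,s)\to u_x(R,t^-)\neq0$ as $s\uparrow t$ (Lemma \ref{lem: smooth ext}), we have $u_x(R,s)\neq0$ for $s\in(t-\delta,t)$ once $\delta$ is small; and applying the linear non-degeneracy of Proposition \ref{prop: u nondegeneracy} at the point $(a,t)$, together with $\Lambda_s<a$ and $\Lambda_s\uparrow a$, we get $u_x(\Lambda_s^+,s)\ge c>0$ for $s\in(t-\delta,t)$ after shrinking $\delta$ further. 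Thus $v$ is caloric in the box and nonvanishing on both lateral boundaries, so by the parabolic Sturm theorem the number $N(s)$ of sign changes of $u_x(\cdot,s)$ on $(\Lambda_s,R)$ is non-increasing on each jump-free subinterval of $(t-\delta,t)$. Across any jump time $s'<t$ of $\Lambda$ --- which we cannot yet exclude before Theorem \ref{thm: jumps} --- the jump is instantaneous, so $u(\cdot,s')=u(\cdot,s'^-)\chi_{\{x>\Lambda_{s'}\}}$, i.e.\ $u_x(\cdot,s')$ agrees with $u_x(\cdot,s'^-)$ on $(\Lambda_{s'},R)$ while the cross-section just loses its left piece $(\Lambda_{s'^-},\Lambda_{s'}]$; hence $N$ also cannot increase across jumps. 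Therefore $N(s)\le N(t-\delta)<\infty$ throughout $(t-\delta,t)$, the finiteness at $s=t-\delta$ being immediate from analyticity of $u(\cdot,t-\delta)$ on $(\Lambda_{t-\delta},R]$. Letting $s\uparrow t$ and using that $u_x(\cdot,s)\to u_x(\cdot,t^-)$ locally uniformly on $(a,\infty)$ (Lemma \ref{lem: smooth ext}), every sign change of $u_x(\cdot,t^-)$ in $(a,R)$ is eventually also one of $u_x(\cdot,s)$, so $u_x(\cdot,t^-)$ has at most $N(t-\delta)$ sign changes on $(a,R)$. Since $R$ is arbitrary, this gives the claim; the argument does not care whether $\Lambda$ jumps at $t$ itself, and it is consistent with the single forced sign change inside such a jump from Lemma \ref{lem: u large jump}.

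The main obstacle is the rigorous application of nodal-number monotonicity in the second step: the left lateral boundary $\{x=\Lambda_s\}$ is only Lipschitz continuous (as $s$ is Lipschitz) and may moreover be interrupted by jumps of $\Lambda$ accumulating at $t$. This is precisely where the two ingredients do the work: the non-degeneracy estimate forces $u_x\ge c>0$ along the regular free boundary, so no new sign change can be injected through $\{x=\Lambda_s\}$, and the instantaneous-freezing identity turns the passage through a jump time into the harmless operation of restricting the cross-section to a subinterval. In the write-up one would either straighten the free boundary and invoke a Sturm-type theorem for divergence-form parabolic equations with bounded coefficients, or argue directly with the maximum principle, using $u_x\ge c>0$ near the boundary as a barrier to prevent zeros from entering.
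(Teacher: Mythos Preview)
The overall architecture of your argument---interior analyticity, positivity of $u_x$ at the moving boundary from non-degeneracy, then Sturm-type propagation---is exactly the paper's. However, there is a genuine gap in your step (6): the non-degeneracy estimate $u(x,s)\ge c(x-\Lambda_s)^+$ does \emph{not} by itself imply $u_x(\Lambda_s^+,s)\ge c$. That implication needs $u(\Lambda_s^+,s)=0$, i.e.\ that $u(\cdot,s)$ vanishes continuously at the free boundary from the right; if instead $u(\Lambda_s^+,s)>0$ (say $u(\cdot,s)\approx\alpha^{-1}$ near $\Lambda_s$, which is compatible with both the non-degeneracy bound and the semiconcavity $u_{xx}\le C$), then $u_x(\Lambda_s^+,s)$ can be zero or negative. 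At this point in the paper $u(\Lambda_s^+,s)=0$ is \emph{not} known for every $s$---it fails precisely at what will later be identified as singular free boundary points---so you cannot assert it for all $s\in(t-\delta,t)$. Your step (9) inherits the same circularity: analyticity on the open interval $(\Lambda_{t-\delta},R]$ does not rule out accumulation of zeros of $u_x(\cdot,t-\delta)$ at $\Lambda_{t-\delta}$, so ``finiteness at $t-\delta$ immediate from analyticity'' is exactly the statement you are trying to prove.

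The paper fills this gap with one extra observation you are missing: by Proposition~\ref{prop: u eq}, $u\in L^2_{\mathrm{loc}}((0,\infty);H^1_{\mathrm{loc}}(\R))$, so for almost every $t_1>0$ the slice $u(\cdot,t_1)$ is continuous on $\R$, which forces $u(\Lambda_{t_1}^+,t_1)=0$ (and, via Lemmas~\ref{lem: lim past regu} and~\ref{lem: u large jump}, also $\Lambda_{t_1}=\Lambda_{t_1^-}$). For such a good $t_1$ the combination of Proposition~\ref{prop: u nondegeneracy} and Lemma~\ref{lem: uxx ux bd} then legitimately yields $u_x(\Lambda_{t_1}^+,t_1)>0$, after which analyticity gives finiteness of the sign changes at time $t_1$, and the forward propagation to all $t\ge t_1$ (which you sketch via Sturm and the paper obtains by citing \cite[Lem.~4.1]{delarue}) finishes the proof. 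Your argument becomes correct once you choose the base time $t-\delta$ among these almost-every good times; without that selection, both the boundary positivity in (6) and the finiteness claimed in (9) are unproved.
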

\begin{proof}
By Proposition \ref{prop: u eq}, $u\in L^{2}_{\operatorname{loc}}((0,\infty);H^1_{\operatorname{loc}}(\R)).$ Therefore, in particular, for almost every $t_1>0$, $u(\cdot,t_1)$ is in $H^{1}_{\operatorname{loc}}(\R)$ and is therefore continuous. This implies, for almost every $t_1>0$,
\begin{equation}
    \lim_{x\downarrow\Lambda_{t_1-}}u(x,t_1)=0.
\end{equation}
By Lemmas \ref{lem: lim past regu} and \ref{lem: u large jump}, it follows that $\Lambda_{t_1}=\Lambda_{t_1-}$ and $u(\cdot,t_1)=u(\cdot,t_1-)$ for any such $t_1$. Furthermore, in view of Lemma \ref{lem: uxx ux bd} and Proposition \ref{prop: u nondegeneracy}, $\lim_{x \downarrow \Lambda_{t_1}}u_x(x,t_1)$ exists, and is strictly positive. On the other hand, since $u$ solves the heat equation in $\{u>0\}=\{(x,t):x>\Lambda_t\}$, $u(\cdot,t_1)$ is analytic in $(\Lambda_{t_1},\infty)$. From the strict monotonicity near the boundary and the interior analyticity, we infer that $u(\cdot,t_1)=u(\cdot,t_1-)$ changes monotonicity at most finitely often on compact subsets of $[\Lambda_{t_1},\infty)$. By \cite[Lem.\ 4.1]{delarue}, this monotonicity property propagates in time and is therefore valid for $u(\cdot,t-)$, for every $t\geq t_1$. Since $t_1$ may be chosen to be arbitrarily small, we conclude that the property is valid for all $t>0$.    
\end{proof}
Finally, we may now prove that, for physical solutions, local uniqueness implies global uniqueness.
\begin{proof}[Proof of Theorem \ref{thm: local to global uniqueness}] Assume $\Lambda^1$ and $\Lambda^2$ are two physical solutions of \eqref{supercooled eqs}, and assume that $\Lambda^1_{t_0-}=\Lambda^2_{t_0-}$ for some $t_0>0$. Letting $u_1$ and $u_2$ be the corresponding functions given by \eqref{u defi}, we then have $u_1(\cdot,t_0-)=u_2(\cdot,t_0-)$. In addition, by Proposition \ref{prop: monotonicity changes}, $u_1(\cdot,t_0-)=u_2(\cdot,t_0-)$ changes monotonicity at most finitely many times on compact subsets of $[\Lambda^1_{t_0-},\infty)=[\Lambda^2_{t_0-},\infty)$. Therefore, by \cite[Thm. 1.1]{delarue}, $\Lambda^1_t=\Lambda^2_t$ for $t\geq t_0$.    
\end{proof}
\section{Jump times for physical solutions and backward propagation of oscillation} \label{sec: jumps}
This section is dedicated to the proof of Theorem \ref{thm: jumps}. Namely, we will prove that, for any physical solution $\Lambda$, no positive time $t_{0}$ can be an accumulation point for the set of discontinuities of $\Lambda$. We first observe that, in view of the nondegeneracy estimates of the previous section, the jumps cannot accumulate to the right of $t_0$. 
\begin{lem} \label{lem: right accum} Let $\Lambda$ be a physical solution to \eqref{supercooled eqs}, let $u$ be given by \eqref{u defi}, and let $t_0>0$. There exists $\vep>0$ such that $\Lambda\in C^{1}((t_0,t_0+\vep))$, $u \in C(\R \times (t_0,t_0+\vep))$, and
\begin{equation} \label{velocity boundary}
    \dot \Lambda_t=\frac{\alpha}{2}\lim_{x \downarrow \Lambda_t}u_x(x,t)=:\frac{\alpha}{2}u_x(\Lambda_t,t)>0, \quad t\in (t_0,t_0+\vep).
\end{equation}
\begin{proof}
 By Proposition \ref{prop: monotonicity changes}, for every $t>0$, $u(\cdot,t-)$ changes monotonicity at most finitely many times on compact subsets of $(\Lambda_t,\infty)$. The result then follows directly by applying \cite[Thm. 1.1]{delarue} to the unique physical solution on $[t_0,\infty)$ with initial density $u(\cdot,t_0-)/\int_{\R}u(\cdot,t_0-)$.
\end{proof}
\end{lem}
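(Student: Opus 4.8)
The plan is to reduce the statement to the regularity theory for the locally monotone regime, \cite[Thm.~1.1]{delarue}, by restarting the physical solution at the positive time $t_0$. By Lemma~\ref{lem: lim past regu} the past limit $u(\cdot,t_0^-)$ is a well-defined nonnegative function, continuous up to $x=\Lambda_{t_0^-}$, with mass $m:=\int_{\R}u(x,t_0^-)\,dx=\P(\tau\ge t_0)$; we may assume $m>0$ (equivalently $\Lambda_{t_0}<\alpha$), since otherwise $\Lambda$ is already constant on $[t_0,\infty)$ and the relevant assertions are empty. Set $\tilde u_0:=u(\cdot,t_0^-)/m$. This is a probability density that is bounded, because $u(\cdot,t_0^-)\le Ct_0^{-1/2}$ by \eqref{u bound}, and that changes monotonicity at most finitely often on compact sets, by Proposition~\ref{prop: monotonicity changes}; hence it falls within the standing hypotheses of \cite[Thm.~1.1]{delarue}.

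The heart of the argument is the restart identity. Using the strong Markov property of $B$ at $t_0$ together with the local-in-time nature of Definitions~\ref{def: probabilistic} and \ref{def: physical}, one checks that the shifted frontier $t\mapsto\Lambda_{t_0+t}-\Lambda_{t_0^-}$ is the unique physical solution issuing from $\tilde u_0$, once one accounts for the mass normalization: conditioning on $\{\tau\ge t_0\}$ turns the parameter $\alpha$ into $\alpha m$ and the density into $\tilde u_0$, and this can be reduced to the form \eqref{supercooled eqs} with parameter $\alpha$ by a space-time dilation. Crucially, the factor $m$ cancels between parameter and density, so the constant $\tfrac{\alpha}{2}$ in the velocity relation is unaffected. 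If $\Lambda$ jumps at $t_0$, then $u(\Lambda_{t_0^-},t_0^-)\ge\alpha^{-1}$ by Lemma~\ref{lem: u large jump}, so the restarted solution simply resolves this jump instantaneously at its own initial time, landing at the image of $\Lambda_{t_0}$; equivalently, one may restart instead from the post-jump density $u(\cdot,t_0)$, for which $u(\Lambda_{t_0},t_0)\le\alpha^{-1}$ by the same lemma.

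Finally, \cite[Thm.~1.1]{delarue} is applied to the restarted solution. As $\tilde u_0$ is bounded, integrable and locally monotone, that result gives that its jump times do not accumulate at the initial time and, through its fine description of the frontier, that $\Lambda$ is $C^1$, $u$ is continuous up to the free boundary, and $\dot\Lambda_t=\tfrac{\alpha}{2} u_x(\Lambda_t,t)$ holds on a right-neighborhood of the initial time (after the possible instantaneous jump there is resolved); the strict inequality $u_x(\Lambda_t,t)>0$ follows from the linear non-degeneracy estimate of Proposition~\ref{prop: u nondegeneracy}. Undoing the normalization then yields $\Lambda\in C^1((t_0,t_0+\vep))$, $u\in C(\R\times(t_0,t_0+\vep))$, and \eqref{velocity boundary}. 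I expect the main obstacle to be the restart identity --- verifying carefully that the restarted process is again \emph{the} physical solution of \eqref{supercooled eqs} associated with $\tilde u_0$, matching the post-jump configuration at $t_0$ with the jump of the restarted solution at time $0$ and tracking the normalizations --- together with extracting from \cite[Thm.~1.1]{delarue} the clean one-sided $C^1$ conclusion, which requires excluding near the restart time the intermediate ($C^{1/2}$) and critical behaviors; for this last point the semiconvexity and non-degeneracy bounds of Sections~\ref{sec:estimates}--\ref{sec: estimates physical} supply the needed input.
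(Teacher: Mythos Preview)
Your proposal is correct and follows essentially the same route as the paper: restart the physical solution at $t_0$ with normalized initial density $u(\cdot,t_0^-)/\int u(\cdot,t_0^-)$, invoke Proposition~\ref{prop: monotonicity changes} to place this density in the locally monotone regime, and then apply \cite[Thm.~1.1]{delarue}. The paper's proof is terser (two sentences) and does not spell out the restart identity, the normalization bookkeeping, or the appeal to Proposition~\ref{prop: u nondegeneracy} for the strict positivity, but these are exactly the details you have correctly identified and handled.
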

\begin{rem}For the benefit of the reader, we observe that, formally, \eqref{velocity boundary} follows by differentiating with respect to time the identity
\begin{equation} \label{lambda u eq}\Lambda_{t}=\alpha\P(\tau\leq t) = \alpha\left(1-\int_{\Lambda_t}^{\infty}u(x,t)dx\right),\end{equation}
while using the fact that $u(\Lambda_t,t)=0$ and $u$ solves the heat equation in $\{x>\Lambda_t\}$.    
\end{rem}
The bulk of this section will then be spent showing that jump times cannot accumulate to the left of any $t_1>0$. The strategy will be to relate the appearance of jumps of $\Lambda$ with oscillatory behavior of $u$ in past times. For this purpose, we begin by citing the following result from \cite[Lem. 4.2]{delarue}, which gives a local description of the zero set of $u_x$ in the set $\{u>0\}$, and is valid for any local solution of the heat equation.
\begin{lem}[Local description of the set of critical points for the heat equation] \label{lem: zero set} Let $\Lambda$ be a physical solution to \eqref{supercooled eqs}, and let $u$ be given by \eqref{u defi}, and let $(x,t)\in \{u>0\}$. Let $k\in \mathbb{N}\backslash\{0\}$ be the smallest integer such that $\partial^{k+1}_xu(x,t) \neq 0$. Then there exists a neighborhood $Q_{\delta}(x,t)$ in which the zero set of $u_x$ is the union of $k$ curves. $2\lfloor k/2\rfloor$ curves are given by the graphs (in the $(t,x)$--coordinate system) of continuous functions on $[t-\delta,t]$ that are $C^{\infty}$ on $[t-\delta,t)$. In the $(x,t)$--coordinate system, these curves form $\lfloor k/2 \rfloor$ strictly convex smooth functions with graphs contained in the negative half-space. If $k$ is odd, there is one more curve given, in the $(t,x)$ coordinates, by the graph of a $C^{\infty}$ function on $[t-\delta,t+\delta]$ that, in particular, crosses the $x$-axis.
    
\end{lem}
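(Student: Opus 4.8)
The plan is to compare the zero set of $u_x$ near the given point with the zero set of a heat polynomial. First, we may assume $(x_0,t_0):=(x,t)$ is a critical point of $u$, i.e.\ $u_x(x_0,t_0)=0$; otherwise $u_x$ is non-vanishing near $(x_0,t_0)$ and there is nothing to prove. After a translation take $(x_0,t_0)=(0,0)$. Since $u$ solves the heat equation on the open set $\{u>0\}$ (Lemma \ref{lem: u bound}), so does $v:=u_x$, and $v$ is jointly $C^\infty$ and real-analytic in $x$. By hypothesis $\partial_x^j v(0,0)=\partial_x^{j+1}u(0,0)=0$ for $0\le j\le k-1$, while $c:=\tfrac1{k!}\partial_x^{k+1}u(0,0)\ne0$; hence $v(\cdot,0)$ vanishes to order exactly $k$ at $0$, and since $\partial_t v=\tfrac12\partial_x^2 v$, so does the parabolic order of vanishing of $v$ at $(0,0)$.

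Next I would expand $v$ at the origin in the basis of one-dimensional heat polynomials. Writing the Taylor series of $v$ and using $\partial_t v=\tfrac12\partial_x^2v$ to group the monomials $x^it^j$ by parabolic degree $i+2j$, each homogeneous layer is caloric and hence a scalar multiple of $H_m$, the $m$-th heat polynomial (the unique caloric polynomial with $H_m(\cdot,0)=x^m$). The layers of degree $<k$ all vanish, so
\[
v(x,t)=c\,H_k(x,t)+R(x,t),
\]
where $R$ is itself caloric and vanishes to parabolic order $k+1$ at $(0,0)$; in particular $|R|\le C(|x|+|t|^{1/2})^{k+1}$ and $|\partial_x R|,\ |t|^{1/2}|\partial_t R|\le C(|x|+|t|^{1/2})^{k}$ near the origin. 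I would then use two explicit facts about $H_k$ (normalized by $\partial_t H_k=\tfrac12\partial_x^2H_k$): the polynomial $z\mapsto H_k(z,-1)$ is, up to a positive factor, the probabilists' Hermite polynomial $\mathrm{He}_k$, with $k$ simple real roots symmetric about $0$; and $z\mapsto H_k(z,1)$ has exactly $k\bmod 2$ real roots --- none when $k$ is even, and only the simple root $z=0$ when $k$ is odd. Thus $\{H_k=0\}$ is, in $\{t<0\}$, the union of the arcs $x=\pm(-t)^{1/2}z_i$ through the $\lfloor k/2\rfloor$ positive roots $z_i$ of $\mathrm{He}_k$, together with the line $\{x=0\}$ when $k$ is odd.

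The third step is a rescaling argument. For $t<0$ small, put $x=(-t)^{1/2}z$; by parabolic homogeneity of $H_k$,
\[
v\big((-t)^{1/2}z,t\big)=(-t)^{k/2}\big(c\,\mathrm{He}_k(z)+\rho(z,t)\big),
\]
where $\rho$ and its $z$-derivatives tend to $0$ as $t\uparrow0$, uniformly on compact $z$-sets. Since $\mathrm{He}_k$ has $k$ simple real roots, a small $C^1$ perturbation of it has exactly $k$ simple real zeros near them; so the implicit function theorem gives that $\{v=0\}$ for $t<0$ near $0$ consists of exactly $k$ branches $x=(-t)^{1/2}Z_i(t)$ with $Z_i\in C^\infty((-\delta,0))$ and $Z_i(t)\to z_i$. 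Discarding the branch through $z_i=0$ (present iff $k$ is odd), the remaining $2\lfloor k/2\rfloor$ branches extend continuously to $t=0$ (each runs to $x=0$) and are $C^\infty$ on $[-\delta,0)$; a computation of $d^2x/dt^2$, whose leading term is $-\tfrac14 z_i(-t)^{-3/2}$, shows that exactly the $\lfloor k/2\rfloor$ branches with $z_i<0$ --- i.e.\ those lying in the half-space $\{x<x_0\}$ --- are strictly convex there, which gives the asserted $\lfloor k/2\rfloor$ convex functions. The same rescaling with $x=t^{1/2}z$ and $H_k(z,1)$ in place of $\mathrm{He}_k$ shows that for $t>0$ near $0$ there are no zeros near $x=0$ when $k$ is even, and exactly one branch $x=t^{1/2}Z_+(t)$ with $Z_+(t)\to0$ when $k$ is odd.

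Finally, for $k$ odd, one must glue the $t<0$ central branch, the origin, and the $t>0$ branch into a single curve $x=\gamma(t)$ on $(-\delta,\delta)$ and show $\gamma\in C^\infty$ including at $t=0$. This is the step I expect to be the main obstacle: $\gamma$ is continuous with $\gamma(t)=O(|t|)$ and $C^\infty$ away from $0$, but at the vertex $v(\cdot,0)$ has a degenerate zero of order $k$, so the implicit function theorem does not apply there directly. My plan would be to first straighten the smooth curve $\{\partial_x^{k-1}v=0\}$ --- a $C^\infty$ graph over time through $(0,0)$ by the implicit function theorem, since $\partial_x^k v(0,0)\ne0$ --- onto the time axis; in the new coordinate $v$ solves a parabolic equation with smooth drift and has vanishing $(k-1)$st space-derivative on the axis. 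Then the smooth Malgrange preparation theorem gives $v=e\cdot P$ with $e$ nowhere zero and $P(x,t)=x^k+b_{k-2}(t)x^{k-2}+\dots+b_0(t)$ a Weierstrass polynomial with $b_{k-1}\equiv0$ and $b_j(0)=0$; differentiating the drift equation repeatedly on the axis pins down the vanishing orders $b_{k-2\ell}(t)=O(|t|^{\ell})$, with the nonzero leading coefficients matching those of $H_k$, and $b_{k-2\ell-1}(t)=O(|t|^{\ell+1})$, so that $P$ is, in the parabolic scaling, a genuine small perturbation of $H_k$. A Newton-polygon analysis of the roots of $P(\cdot,t)$ then isolates the unique root near $x=0$ that persists through the sign change of $t$, showing it depends $C^\infty$ on $t$ (the other $k-1$ roots live at scale $|t|^{1/2}$ for $t<0$ and are precisely the convex/concave branches of the previous step), and confirms that $\{v=0\}\cap Q_\delta(x_0,t_0)$ is exactly the union of the $2\lfloor k/2\rfloor$ past branches and, when $k$ is odd, the crossing curve, provided $\delta$ is small. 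As an alternative to this last step, one may instead invoke the structure theory for zero sets of one-dimensional linear parabolic equations (Angenent; Sturm): nodal curves are $C^\infty$ graphs over time away from isolated branch points, the number of zeros is non-increasing in time, and at a branch point of order $k$ exactly $2\lfloor k/2\rfloor$ nodal curves annihilate from the past while $k\bmod2$ continue into the future; combined with the leading-order $H_k$-analysis above for the convexity and the crossing curve, this yields the claim.
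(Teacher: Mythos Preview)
The paper does not prove this lemma; it is quoted from \cite[Lem.~4.2]{delarue} and used as a black box, so there is no in-paper argument to compare against.

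Your approach---expand $v=u_x$ in heat polynomials, rescale parabolically so that the leading part becomes $\mathrm{He}_k$ for $t<0$ and $H_k(\cdot,1)$ for $t>0$, and read off the branch structure from the simplicity of the Hermite roots---is the classical route to this type of result and is essentially the method behind the cited reference. The rescaling and the branch count are correct. For the odd-$k$ gluing you rightly flag the $C^\infty$ regularity of the crossing curve at the vertex as the delicate point; your Malgrange-preparation outline is on the right track, though the sentence ``a Newton-polygon analysis \ldots\ showing it depends $C^\infty$ on $t$'' skips a step (the Newton polygon gives the order of the central root, not its smooth dependence through $t=0$). The Angenent alternative you mention at the end is the standard citation and suffices.

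One misreading: the ``$\lfloor k/2\rfloor$ strictly convex smooth functions'' in the statement are not the $\lfloor k/2\rfloor$ individual branches with $z_i<0$. The intended meaning is that the $2\lfloor k/2\rfloor$ past branches, taken in pairs $x\approx \pm z_i\sqrt{-t}$ for each positive Hermite root $z_i$, merge at the vertex into $\lfloor k/2\rfloor$ single smooth curves which, viewed as graphs $t=\psi_i(x)$, lie in the half-space $\{t<t_0\}$. Your $d^2x/dt^2$ computation is correct but establishes convexity of half of the branches as graphs over $t$, which is not the pairing asserted in the lemma.
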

We now show that, for any $t_0\in (0,\infty)$, any $t_1>t_0$ sufficiently close to $t_0$, and any point $(x_1,t_1)$ such that $u(x_1,t_1)>0$ and $u_x(x_1,t_1)=0$, the point $(x_1,t_1)$ can be connected back to $\overline{\{u(\cdot,t_0)>0\}}$ with a continuous curve that remains contained in $\{u>0\}\cap\{u_x=0\}$ for $t\in (t_0,t_1]$. In addition, this curve may be chosen so that $u_x \neq 0$ in either a left-neighborhood or a right-neighborhood of the curve. 
\begin{lem}[Local construction of zero curves] \label{lem:zerocurves} Let $\Lambda$ be a physical solution to \eqref{supercooled eqs}, and let $u$ be given by \eqref{u defi}. Let $t_0\in (0,\infty)$, and let $\vep>0$ be such that $\Lambda \in C^1((t_0,t_0+\vep))$, and $u_x(\Lambda_t,t)>0$ for $t\in (t_0,t_0+\vep)$. Let $x_1\in (\Lambda_{t_1},\infty)$ be such that $u_x(x_1,t_1)=0$. There exist continuous curves $z^-,z^+ : [t_0,t_1]\to (0,\infty)$, such that
\begin{enumerate}
    \item[(a)] $z^{\pm}(t_1)=x_1$, and $z^-(t) \leq z^+(t)$ for all $t\in [t_0,t_1]$.
    \item[(b)] $z^{\pm}(t) \in (\Lambda_t,\infty)$ and $u_x(z^{\pm}(t),t)=0$ whenever $t\in (t_0,t_1]$.
    \item[(c)] For every $t\in (t_0,t_1]$, there exists a parabolic neighborhood
    \be Q_{\delta}^-(z^{\pm}(t),t) \subset \{(t,x)\in (t_0,t_1]\times (0,\infty):\Lambda_t < x\}\ee
    on which $u_x(x,t)\neq 0$ whenever $\pm x>\pm z^{\pm}(t)$. 
\end{enumerate} 
The curves $z^{\pm}=z^{\pm}_{(t_1,x_1)}$ are uniquely determined by $t_1$ and $x_1$. Furthermore, two curves $z^-_{(t_1,x_1)}$ and $z^-_{(t_2,x_2)}$ (or two curves $z^+_{(t_1,x_1)}$ and $z^+_{(t_2,x_2)}$)  do not intersect at any $t\in [t_0,\min(t_1,t_2)]$ unless they agree in all of $[t_0,\min(t_1,t_2)]$. 

\begin{proof} This result is based on part of the proof of \cite[Lem 4.1]{delarue}, where a truncated version of $z^-$ was constructed. Our proof is much shorter, since the estimates of Lemma \ref{lem: uxx ux bd} completely circumvent the lengthy technical discussion of \cite[Lem. 4.3]{delarue}. We give the full details of the construction, because they are needed to deduce additional new properties, such as the fact that the curves can be defined up to $t=t_0$, and with no need for spatial truncation. By Lemma \ref{lem: zero set}, there exists a parabolic neighborhood $Q_{\sqrt{\delta}}^-(x_1,t_1)$ such that the zero set of $u_x$ is given by the union of the graphs of finitely many smooth curves, parametrized by time. We can then define $z^-$ and $z^+$ on $[t_1-\delta,t_1]$ as the minimum and maximum, respectively, of these curves. This argument can be continued until some minimal time $\underline{t}\in [t_0,t_1]$, namely $\underline{t}=\inf \mathcal{T}_0$, where
\[\mathcal{T}_0:=\{r\in (t_0,t_1]: \text{(a), (b), and (c) hold, with} \;t_0 \;\text{replaced by } r, \text{ for some continuous}\; z^{\pm}:[r,t_1]\to (0,\infty)\}.\]
We observe now that, given $r\in \mathcal{T}_0$, both $z^-$ and $z^+$ are uniquely determined up to $t=r$, because otherwise (c) would be violated at the first time where two different candidates intersect. Therefore, the unique curves $z^-$ and $z^+$ can both be extended to $(\ut,t_1]$. By the intermediate value theorem, the set of limit points as $t\downarrow \underline{t}$ of either $z^-(t)$ or $z^+(t)$ forms a closed (possibly infinite) interval. Since $u(\cdot,\underline{t})$ is analytic and non-constant on $(\Lambda_{\ut},\infty)$, and $u_x$ vanishes along $z^{\pm}$, this closed interval must be a point, which means that $z^{\pm}$ can be extended continuously up to $t=\underline{t}$, although without yet ruling out the possibility that $z^{\pm}(\underline{t})=+\infty$.

We claim that two different curves of the same type (either of type $z^-$ or of type $z^+$) cannot intersect, even if they have different values at $t=t_1$. Assume, by contradiction, that two distinct curves $z_1$ and $z_2$ of the same type intersect, and let $s\in [\underline{t},t_1)$ be the first intersection time, so that $y:=z_1(s)=z_2(s)$. With no loss of generality, we may assume that $z_1< z_2$ on $(s,t_1]$. If $y>\Lambda_s$, then $u(y,s)>0$, and we obtain a contradiction by applying the maximum principle to the caloric function $u_x$ in the region $\Omega:=\{(x,t):t\in (s,t_1), z_1(t)\leq x \leq z_2(t)\}$ bounded by the two curves, noting that $u_x\equiv 0$ on $\partial_{p}\Omega$. If $y=\Lambda_s$, the only issue is that $u_x$ may fail to be continuous at $(y,s)$. However, we can still deduce a contradiction from the maximum principle as long as we prove that
\begin{equation}
    \lim_{(x,t)\to (y,s), \;(x,t)\in \Omega} u_x(x,t)=0.
\end{equation}
But this follows from the spatial semiconcavity estimate of Lemma \ref{lem: uxx ux bd}, because $u_x(z_1(t),t)=u_x(z_2(t),t)=0$ and $\lim_{t \downarrow s} (z_2(t)-z_1(t))=0$. This completes the proof that distinct curves of the same type cannot intersect. 

Similarly, we observe that $z^{\pm}(\underline{t})<\infty$. Indeed, otherwise, we would obtain a contradiction as before by noting that $u_x$ is caloric in $\Omega:=\{(x,t): t\in (s,t_1), z^{\pm}(t)<x\}$, with $u_x\equiv 0$ on $\partial_{p}\Omega$.

Finally, we claim that the curves $z^{\pm}$ may be defined up to $t=t_0$, namely that $\underline{t}=t_0$. Since we have ruled out the possibility that $z^{\pm}(\underline{t})=\infty$, Lemma \ref{lem: zero set} implies that if $\underline{t}>t_0$, then $z^-(\underline{t})=\Lambda_{\underline{t}}$. To see that this is impossible, recall that, by the choice of $\vep$, $u_x(\Lambda_t,t)=\frac{2}{\alpha}\dot\Lambda_t>0$ for $t\in (t_0,t_1)$, and $\dot \Lambda$ is continuous on $(t_0,t_1)$. Thus, by Lemma \ref{lem: uxx ux bd}, $u_x>0$ in some small neighborhood $\{(x,t):t\in [\underline{t},t_1], x\in (\Lambda_{t},\Lambda_t+\delta)\}$, a contradiction to the fact that $u_x$ vanishes along $z^-$. This concludes the proof that $\underline{t}=t_0$.
\end{proof}
\end{lem}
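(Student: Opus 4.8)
The plan is to run the continuation argument of \cite[Lem. 4.1]{delarue}, but to use the one-sided second-order bound of Lemma \ref{lem: uxx ux bd} as a shortcut that lets the curves be built all the way down to $t=t_0$ with no spatial truncation. First I would construct the curves locally near $(x_1,t_1)$: since $u(x_1,t_1)>0$, Lemma \ref{lem: zero set} writes the zero set of $u_x$ in a parabolic neighborhood $Q^-_{\sqrt\delta}(x_1,t_1)$ as the graphs of finitely many smooth, time-parametrized curves, and setting $z^-$ (resp. $z^+$) equal to the pointwise minimum (resp. maximum) of these graphs gives (a)--(c) on $[t_1-\delta,t_1]$. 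Iterating this at the left endpoint of the current domain yields a maximal continuation down to $\ut:=\inf\mathcal T_0$, where $\mathcal T_0$ is the set of $r\in(t_0,t_1]$ for which (a)--(c) hold on $[r,t_1]$ for some continuous $z^\pm$. The entire task is then to prove $\ut=t_0$.

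The first ingredient is uniqueness of the continuation: if two candidates for $z^-$ (with the same base point) differed, then at their first coincidence time, working just below it and using Lemma \ref{lem: zero set}, the smaller candidate would sit strictly to the left of the larger one inside the local zero set of $u_x$, violating (c) for the larger candidate; hence $z^-$ and $z^+$ are uniquely determined on $(\ut,t_1]$. Combining this with the intermediate value theorem, the set of subsequential limits of $z^\pm(t)$ as $t\downarrow\ut$ is a closed interval along which $u(\cdot,\ut)$ is critical; since $u$ solves the heat equation in $\{u>0\}$ it is analytic and non-constant in the space variable there, so that interval collapses to a single value, and $z^\pm$ extends continuously to $t=\ut$, a priori with value in $(\Lambda_{\ut},+\infty]$.

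The heart of the argument is that two curves of the same type (two of type $z^-$, or two of type $z^+$), possibly with different base points, cannot cross on their common interval unless they coincide there. If they first met at time $s$ at a point $y$, then $u_x$ is caloric in the curvilinear region $\Omega$ they bound, vanishes on $\partial_p\Omega$, and hence (by the maximum principle) $u_x\equiv 0$ in $\Omega$; then $u$ is $x$-independent in $\Omega$, which contradicts its analyticity together with the boundary condition $u(\Lambda_t,t)=0$. The delicate case is $y=\Lambda_s$, where $u_x$ need not be continuous at $(y,s)$; here Lemma \ref{lem: uxx ux bd} is exactly what is needed, because $u_{xx}\le C_R$ squeezes $u_x$ between the two vanishing boundary values $u_x(z_1(t),t)=u_x(z_2(t),t)=0$, forcing $u_x\to 0$ at $(y,s)$ along $\Omega$, which suffices to run the maximum principle. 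The same caloric-function argument applied to the half-infinite region to the right of $z^+$, using that $u$ (hence $u_x$, by interior estimates) decays as $x\to\infty$ thanks to the Gaussian bound of Lemma \ref{lem: u bound}, rules out $z^\pm(\ut)=+\infty$.

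Finally, suppose for contradiction $\ut>t_0$. Since $z^-(\ut)<\infty$: if $z^-(\ut)>\Lambda_{\ut}$ then $(z^-(\ut),\ut)\in\{u>0\}$ and the local construction could be repeated past $\ut$, contradicting minimality, so necessarily $z^-(\ut)=\Lambda_{\ut}$. But by the hypothesis on $\vep$ we have $u_x(\Lambda_t,t)=\dot\Lambda_t>0$ with $\dot\Lambda$ continuous near $\ut$, and feeding this into the semi-concavity bound of Lemma \ref{lem: uxx ux bd} produces a full neighborhood $\{\,t\in[\ut,t_1],\ \Lambda_t<x<\Lambda_t+\eta\,\}$ on which $u_x>0$, contradicting $u_x(z^-(t),t)=0$ with $z^-(t)\to\Lambda_{\ut}$. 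Hence $\ut=t_0$, and since $t_0>0$ forces $\Lambda_{t_0}>\Lambda_0\ge 0$ we get $z^\pm(t_0)\ge\Lambda_{t_0}>0$, so indeed $z^\pm:[t_0,t_1]\to(0,\infty)$. The uniqueness of $z^\pm_{(t_1,x_1)}$ asserted in the statement is precisely the first ingredient above, and the non-intersection claim for different base points is the crossing argument of the third paragraph. I expect the main obstacle to be exactly that third step — the boundary case $y=\Lambda_s$ and the potential escape of $z^\pm$ to $+\infty$ — both of which are handled by the one-sided estimate $u_{xx}\le C_R$ of Lemma \ref{lem: uxx ux bd} together with the decay from Lemma \ref{lem: u bound}.
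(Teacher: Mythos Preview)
Your proposal is correct and follows essentially the same route as the paper's proof: the local construction via Lemma \ref{lem: zero set}, the continuation argument with $\underline t=\inf\mathcal T_0$, the uniqueness of the continuation via property (c), the extension to $\underline t$ through analyticity, the non-crossing argument via the maximum principle for $u_x$ (with the boundary case $y=\Lambda_s$ handled by the semi-concavity bound of Lemma \ref{lem: uxx ux bd}), and the final step ruling out $\underline t>t_0$ using $u_x(\Lambda_t,t)>0$ together with Lemma \ref{lem: uxx ux bd}. Your write-up is in places slightly more explicit than the paper's --- you spell out the analyticity contradiction when $u_x\equiv 0$ in $\Omega$, you invoke the Gaussian decay of Lemma \ref{lem: u bound} to justify $u_x\to 0$ at spatial infinity, and you check $z^\pm(t_0)>0$ --- but these are elaborations, not a different argument.
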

Next, we will exploit the local construction of zero curves to show backward propagation of oscillation. Namely, given $t_0<t_1$, and points $x_1<x_2$ with $u_x(x_1,t_1)<0<u_x(x_2,t_1)$, we show that one may propagate this monotonicity change back to time $t_0$ via curves along which $u_x$ has constant sign. As a first step, we prove that this is possible for short times.
\begin{figure}

\tikzset{every picture/.style={line width=0.75pt}} 

\begin{tikzpicture}[x=0.75pt,y=0.75pt,yscale=-1,xscale=1]

\draw  (12,266.81) -- (494.5,266.81)(60.25,23.81) -- (60.25,293.81) (487.5,261.81) -- (494.5,266.81) -- (487.5,271.81) (55.25,30.81) -- (60.25,23.81) -- (65.25,30.81)  ;
\draw    (59,49.31) -- (478.5,49.75) ;
\draw [color={rgb, 255:red, 208; green, 2; blue, 27 }  ,draw opacity=1 ]   (240.66,49.72) -- (386.83,150.48) ;
\draw    (60.25,266.81) .. controls (91,238.81) and (124.04,71.54) .. (167.04,49.54) ;
\draw    (221.83,49.15) .. controls (401.05,136.64) and (143.25,248.81) .. (60.25,266.81) ;
\draw    (96.83,266.15) .. controls (154.14,257.8) and (384.33,191.48) .. (404.83,161.15) ;
\draw [color={rgb, 255:red, 208; green, 2; blue, 27 }  ,draw opacity=1 ]   (388.83,162.48) -- (386.83,150.48) ;
\draw [color={rgb, 255:red, 208; green, 2; blue, 27 }  ,draw opacity=1 ]   (78.74,266.06) .. controls (99.83,265.81) and (357.83,189.48) .. (388.83,162.48) ;
\draw [color={rgb, 255:red, 74; green, 144; blue, 226 }  ,draw opacity=1 ]   (404.83,161.15) .. controls (410.76,146.8) and (377.83,123.15) .. (386.83,114.15) ;
\draw  [dash pattern={on 0.84pt off 2.51pt}]  (60.15,160.76) -- (483.56,162.53) ;

\draw (110,60) node [anchor=north west][inner sep=0.75pt]  [font=\scriptsize] [align=left] {$\displaystyle x=\Lambda _{t}$};
\draw (225,82) node [anchor=north west][inner sep=0.75pt]  [font=\scriptsize] [align=left] {$\displaystyle x=\tilde{z}^{-}( t)$};
\draw (488,274) node [anchor=north west][inner sep=0.75pt]  [font=\small] [align=left] {$\displaystyle x$};
\draw (66,11) node [anchor=north west][inner sep=0.75pt]  [font=\small] [align=left] {$\displaystyle t$};
\draw (269,179) node [anchor=north west][inner sep=0.75pt]  [font=\scriptsize,color={rgb, 255:red, 208; green, 2; blue, 27 }  ,opacity=1 ] [align=left] {$\displaystyle x=\gamma _{1}( t)$};
\draw (429,136) node [anchor=north west][inner sep=0.75pt]  [font=\scriptsize,color={rgb, 255:red, 74; green, 144; blue, 226 }  ,opacity=1 ] [align=left] {$\displaystyle x=f( t)$};
\draw (368,193) node [anchor=north west][inner sep=0.75pt]  [font=\scriptsize] [align=left] {$\displaystyle x=z^{-}( t)$};
\draw (26,46) node [anchor=north west][inner sep=0.75pt]  [font=\scriptsize] [align=left] {$\displaystyle t=t_{1}$};
\draw (25,250) node [anchor=north west][inner sep=0.75pt]  [font=\scriptsize] [align=left] {$\displaystyle t=t_{0}$};
\draw (29,154) node [anchor=north west][inner sep=0.75pt]  [font=\scriptsize] [align=left] {$\displaystyle t=\underline{t}$};

\end{tikzpicture}

\caption{Short time construction of $\gamma_1(t)$. The zero curve $\tilde{z}^-$ acts as a barrier that precludes $z^-$ from hitting the boundary at $t=t_0$.}
\label{fig:zerocurves}
\end{figure}
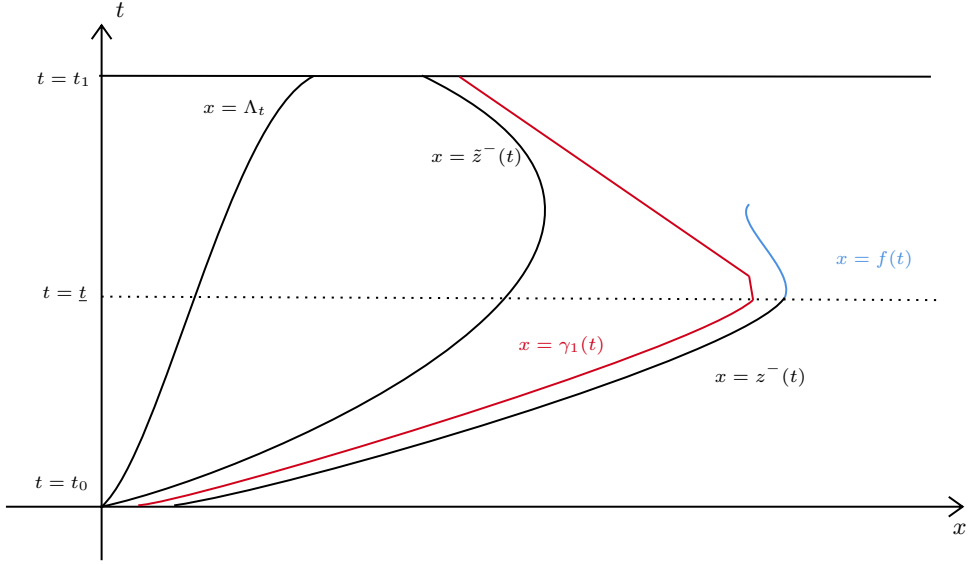
\begin{lem}[Local construction of curves of constant monotonicity] \label{lem:local curves} Let $\Lambda$ be a physical solution to \eqref{supercooled eqs}, let $u$ be given by \eqref{u defi}, and let $t_0\in (0,\infty)$. There exists $\vep>0$ such that the following holds for every $t_1\in (t_0,t_0+\vep)$. Assume that $x_1\in (\Lambda_{t_1},\infty)$ satisfies $u_x(x_1,t_1)<0$. Then there exists a continuous function $\gamma_1:[t_0,t_1]\to [0,\infty)$ such that $\Lambda_t<\gamma_1(t)$, $\gamma_1(t_1)=x_1$, and $u_x(\gamma_1(t),t)<0$ for all $t \in [t_0,t_1]$. Furthermore, if $x_2>x_1$ is such that $u_x(x_2,t_1)>0$, there exists a continuous function $\gamma_2:[t_0,t_1] \to [0,\infty)$ such that $\Lambda_t <\gamma_1(t) < \gamma_2(t)$, $\gamma_2(t_1)=x_2,$ and $u_x(\gamma_2(t),t)>0$ for all $t\in [t_0,t_1]$.
\end{lem}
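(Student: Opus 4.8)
The plan is to build $\gamma_1$ as a curve squeezed between two globally defined zero curves of $u_x$, obtained by following backward in time the ``valley'' of $u_x$ that contains $x_1$ at time $t_1$. First I would fix $\vep>0$ as in Lemma~\ref{lem: right accum}, so that $\Lambda\in C^1((t_0,t_0+\vep))$, $u$ is continuous there, and $u_x(\Lambda_t,t)=\dot\Lambda_t>0$ on $(t_0,t_0+\vep)$; this $\vep$ serves in the statement, and henceforth $t_1\in(t_0,t_0+\vep)$ is fixed. Given $x_1\in(\Lambda_{t_1},\infty)$ with $u_x(x_1,t_1)<0$, the function $u(\cdot,t_1)$ is analytic and non-constant on $(\Lambda_{t_1},\infty)$ with $u_x(\Lambda_{t_1},t_1)>0$, so the zeros of $u_x(\cdot,t_1)$ in $(\Lambda_{t_1},x_1)$ are finite and non-empty; let $a_1$ be the largest, and let $b_1\in(x_1,\infty]$ be the first zero of $u_x(\cdot,t_1)$ to the right of $x_1$, so that $u_x(\cdot,t_1)<0$ on $(a_1,b_1)\ni x_1$. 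Applying Lemma~\ref{lem:zerocurves} at the interior zero $(a_1,t_1)$, and at $(b_1,t_1)$ when $b_1<\infty$, yields global zero curves $z^+:=z^+_{(t_1,a_1)}$ and $z^-:=z^-_{(t_1,b_1)}$ on $[t_0,t_1]$, with $z^\pm(t)>\Lambda_t$ for $t\in(t_0,t_1]$ and $z^+(t_1)=a_1$, $z^-(t_1)=b_1$, and with $u_x<0$ just to the right of $z^+$ and just to the left of $z^-$ near $t_1$.

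The key structural fact I would establish is that \emph{$z^+$ and $z^-$ do not cross on $(t_0,t_1]$}: if $z^+(s)=z^-(s)=:y$ for some $s$, take $s$ maximal, so $z^+<z^-$ on $(s,t_1]$; then for every $t$ slightly above $s$ there are two distinct zeros of $u_x(\cdot,t)$ near $y$, whereas $(y,s)\in\{u>0\}$ and the local description of the nodal set of the caloric function $u_x$ at $(y,s)$, Lemma~\ref{lem: zero set}, allows at most one zero curve to issue into $\{t>s\}$ --- a contradiction. (When $b_1=\infty$ the slab is simply $\{x>z^+(t)\}$.) Having a nonempty slab $\{z^+(t)<x<z^-(t)\}$ on $(t_0,t_1]$, I would then let $\mathcal C$ be the connected component of $\{(x,t):t_0<t\le t_1,\ z^+(t)<x<z^-(t),\ u_x(x,t)<0\}$ containing $(x_1,t_1)$, and take $\gamma_1$ to be a continuous graph over $t$ inside $\mathcal C$ with $\gamma_1(t_1)=x_1$. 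The only obstruction is that $\mathcal C$ might be ``pinched off'' at some $\underline t\in(t_0,t_1)$, its time-projection failing to reach $t_0$; but a pinch-off must occur at an interior point $(y,\underline t)$ with $y>\Lambda_{\underline t}$ and $u_x(y,\underline t)=0$ (it cannot sit on $\{x=\Lambda_t\}$, where $u_x=\dot\Lambda_t>0$ by Lemma~\ref{lem: right accum}), and it would force two zero curves of $u_x$ --- the two sides of the vanishing valley --- to meet at $(y,\underline t)$ from above with nothing continuing below, a ``lens'' configuration again forbidden by Lemma~\ref{lem: zero set}; in all other cases the valley continues past $\underline t$ (it may split, but $x_1$'s descendant lies in one of the resulting pieces). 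Hence $\mathcal C$ reaches $t_0$, $\gamma_1$ exists, and $\Lambda_t<z^+(t)<\gamma_1(t)$, $u_x(\gamma_1(t),t)<0$ for $t\in(t_0,t_1]$; the endpoint $t_0$ (where Lemma~\ref{lem:zerocurves} only gives $z^+(t_0)\ge\Lambda_{t_0}$) is treated by letting $t\downarrow t_0$ and using continuity of $u_x$ on $\{u>0\}$, the analyticity of $u_x(\cdot,t_0)$, and the strong maximum principle to promote $u_x(\gamma_1(t_0),t_0)\le0$ to $<0$, modifying $\gamma_1$ slightly near $t_0$ if necessary.

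For $\gamma_2$ I would run the same scheme with the opposite sign: since $u_x(x_1,t_1)<0<u_x(x_2,t_1)$, $u_x(\cdot,t_1)$ has a zero in $(x_1,x_2)$, so the largest zero $a_2$ of $u_x(\cdot,t_1)$ in $(\Lambda_{t_1},x_2)$ exceeds $x_1$ and $u_x(\cdot,t_1)>0$ on $(a_2,x_2]$; building $\gamma_2$ between $z^+_{(t_1,a_2)}$ and the first zero curve of $u_x$ to the right of $x_2$ gives a continuous $\gamma_2:[t_0,t_1]\to[0,\infty)$ with $\gamma_2(t_1)=x_2$, $\Lambda_t<\gamma_2(t)$, and $u_x(\gamma_2(t),t)>0$. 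Finally $\gamma_1<\gamma_2$ on $[t_0,t_1]$: the two curves can never coincide, since at a common point $u_x$ would be both negative and positive, and $\gamma_1(t_1)=x_1<x_2=\gamma_2(t_1)$, so by continuity $\Lambda_t<\gamma_1(t)<\gamma_2(t)$ throughout.

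The main difficulty I expect is the backward continuation --- proving that the valley of $u_x$ through $x_1$ cannot be pinched off before time $t_0$. This genuinely needs input: as $t$ decreases the number of sign changes of the caloric function $u_x$ can increase, so new oscillations of $u_x$ may appear inside the slab between $z^+$ and $z^-$. The resolution is the rigidity of the nodal set near an interior zero provided by Lemma~\ref{lem: zero set}, which forbids exactly the ``lens'' that a pinch-off would create, together with the fact (from Lemma~\ref{lem:zerocurves}) that $z^+,z^-$ are globally defined zero curves staying off $\{x=\Lambda_t\}$ --- the auxiliary barrier $\tilde z^-$ in Figure~\ref{fig:zerocurves} being such a curve. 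A secondary point is the behaviour near $t=t_0$ and along the free boundary, where $u_x$ need not be continuous; this is handled using $u_x(\Lambda_t,t)=\dot\Lambda_t>0$ and the one-sided bound $u_{xx}\le C_R$ of Lemma~\ref{lem: uxx ux bd}, exactly as in the proof of Lemma~\ref{lem:zerocurves}.
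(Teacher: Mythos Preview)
Your route is genuinely different from the paper's. You work topologically: bound the valley of $\{u_x<0\}$ by the two global zero curves $z^+_{(t_1,a_1)}$ and $z^-_{(t_1,b_1)}$, forbid a pinch-off by the ``no lens opening upward'' consequence of Lemma~\ref{lem: zero set}, and select $\gamma_1$ from the resulting connected component. The paper instead builds $\gamma_1$ explicitly: first along a steep straight segment $t\mapsto x_1+M(t_1-t)$ until it meets $\{u_x=0\}$ at some time $\underline t$, then along a thin one-sided tubular neighborhood of the zero curve $z^\pm$ issued from that contact point. Your pinch-off argument is correct (Lemma~\ref{lem: zero set} allows at most one zero branch into $\{t>s\}$, so two valley walls cannot meet from above), and your reasoning for $\gamma_2$ and for the ordering $\gamma_1<\gamma_2$ is fine.

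There is, however, a real gap at the endpoint $t=t_0$. Lemma~\ref{lem:zerocurves} only gives $z^+(t)>\Lambda_t$ for $t\in(t_0,t_1]$; at $t_0$ one may well have $z^+(t_0)=\Lambda_{t_0}$, and nothing you have written rules out $\mathcal C$ (hence $\gamma_1$) collapsing onto the free boundary as $t\downarrow t_0$. Your proposed fix --- ``$u_x(\Lambda_t,t)=\dot\Lambda_t>0$ together with $u_{xx}\le C_R$'' --- does not work: the bound $u_{xx}\le C$ yields only $u_x(x,t)\le u_x(\Lambda_t,t)+C(x-\Lambda_t)$, an \emph{upper} bound, and you have no uniform lower bound on $\dot\Lambda_t$ as $t\downarrow t_0$; so you cannot conclude that $\{u_x<0\}$ stays a fixed distance from $\{x=\Lambda_t\}$. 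This is exactly what the paper's auxiliary barrier $\tilde z^-$ (which you mention but do not use) is for: the paper's $z^-$ and $\tilde z^-$ are \emph{two distinct curves of the same $z^-$ type}, hence by the non-intersection clause of Lemma~\ref{lem:zerocurves} they are strictly ordered on all of $[t_0,\underline t]$, giving $z^-(t_0)>\tilde z^-(t_0)\ge\Lambda_{t_0}$. In your construction only one $z^+$ curve appears and no such ordering is available. A clean repair within your framework is to invoke Proposition~\ref{prop: u nondegeneracy} to get $u_x(\Lambda_t^+,t)\ge c>0$ uniformly for $t$ near $t_0$, but you would still need a lower bound on $u_{xx}$ (which is not available) or a separate compactness argument; the barrier trick is the efficient way out.

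A secondary, smaller gap: the existence of a continuous graph $t\mapsto\gamma_1(t)$ inside a general connected open subset of the plane is asserted but not proved. The paper sidesteps this entirely by following the explicit tubular neighborhood of a single continuous zero curve; in your setting the simplest fix is to do the same along the right side of $z^+_{(t_1,a_1)}$ (using property~(c)), which also makes the connected-component machinery unnecessary.
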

\begin{proof} Let $\vep>0$ be as in Lemma \ref{lem: right accum}. We first construct the curve $\gamma_1$. 
Using the fact that $u_x(\Lambda_{t_1},t_1)>0$ and $u_x(x_1,t_1)<0$, there must exist $x_0\in (\Lambda_{t_1},x_1)$ such that $u_x(x_0,t_1)=0$. Let $\tilde{z}^-:[t_0,t_1]\to (0,\infty)$ be the corresponding zero curve satisfying $\tilde{z}^-(t_1)=x_0$ given by Lemma \ref{lem:zerocurves}.  Let $M>1$ be sufficiently large that the curves $t\mapsto (\tilde{z}^-(t),t)$ and $t \mapsto (x_1+M(t_1-t),t)$ do not intersect for $t\in [t_0,t_1]$, and let
\begin{equation} \ut=\inf\{t\in [t_0,t_1]: u_x(x_1+M(t_1-s),s)<0,\,\, \text{ for all } s\in[t,t_1]\}. \end{equation}
Since $u$ is smooth away from the boundary, $t_0\leq \ut <t_1$. If we had $u_x(x_1+M(t_1-\underline{t}),\ut)<0$, then $\ut=t_0$ and we may simply take $\gamma_1(t)\equiv x_1+M(t_1-t)$. We may assume then that $u_x(x_1+M(t_1-\underline{t}),\ut)=0$. 

We define the curve $\gamma_1$ in two steps: first on the interval $[\ut,t_1]$, and then on the interval $[t_0,\ut]$. In view of Lemma \ref{lem: zero set}, there exists $\delta>0$ such that the set \[\{u_x=0\} \cap \left((x_1+M(t_1-\underline{t})-\delta,x_1+M(t_1-\underline{t})+\delta)\times [\ut,\ut+\delta)\right)\] equals either the singleton $\{(x_1+M(t_1-\underline{t}),\ut)\}$ or $\{(f(t),t):t\in [\ut,\ut+\delta)\}$ for some smooth function $f:[\ut,\ut+\delta)\to \R$ such that $f(\ut)=x_1+M(t_1-\underline{t})$. In either case, we define the curve $\gamma_1$ on $[\ut,t_1]$ by letting $\gamma_1(t)=x_1+M(t_1-t)$ for $t \in [\ut+\delta/(2M),t_1]$, and letting $\gamma_1$ equal a linear function on $ [\ut, \ut+\delta/(2M)]$, chosen in such a way that $\gamma_1(t)\in(x_1+M(t_1-\underline{t})-\delta/2,x_1+M(t_1-\underline{t})+\delta/2)$ and, if applicable, $\gamma_1(t)\neq f(t)$ for all $t\in [\ut,\ut+\delta)$ (see Figure \ref{fig:zerocurves}).

If $\ut=t_0$, then the construction is complete, so we assume henceforth that $\ut>t_0$. Let $z^{\pm}:[t_0,\ut]\mapsto [0,\infty)$ be the two zero curves obtained by taking  $t_1:=\ut$ in Lemma \ref{lem:zerocurves}. We claim first that $z^-(t_0)>\Lambda_{t_0}$. Indeed, by our choice of $M$, the curves $z^-$ and $\tilde{z}^-$ are distinct, and therefore, by Lemma \ref{lem:zerocurves}, they do not intersect, which implies that $\Lambda_{t_0-}\leq \tilde{z}^-(t_0)<z^-(t_0)$.

Now, depending on whether $\gamma_1(\ut)<x_1+M(t_1-\underline{t})$ or $\gamma_1(\ut)>x_1+M(t_1-\underline{t})$ we construct $\gamma_1$ on $[t_0,\ut]$ so that $(\gamma_1(t),t)$ remains, respectively, either on the left side of a tubular neighborhood of $(z^-(t),t)$ or on the right side of a tubular neighborhood of $(z^+(t),t)$ (Figure \ref{fig:zerocurves} depicts the case $\gamma_1(\underline{t})<x_1+M(t_1-\underline{t})$). Such a tubular neighborhood may be chosen in view of the fact that $z^+(t_0)\geq z^-(t_0)>\Lambda_{t_0}$, ensuring that the curves $z^{\pm}$ remain bounded away from the free boundary. The fact that $u_x(\gamma_1(t),t)<0$ then follows from property (c) in Lemma \ref{lem:zerocurves}, combined with an additional application of Lemma \ref{lem: zero set} to the point $(z^-(t_0),t_0)$, provided that the tubular neighborhood is sufficiently narrow.

Finally, if $x_2>x_1$ satisfies $u_x(x_2,t_1)>0$, then the construction of the curve $\gamma_2$ proceeds in the same way, except without the need to appeal to the auxiliary barrier $(\tilde{z}^-(t),t)$, since the curve $(\gamma_1(t),t)$ itself, along which $u_x<0$, keeps the entire construction away from the boundary.
\end{proof}
The next result shows that backward propagation of oscillation holds for arbitrarily long time intervals.
\begin{lem}[Global construction of curves of constant monotonicity] \label{lem:global curves} Let $\Lambda$ be a physical solution to \eqref{supercooled eqs}, let $u$ be given by \eqref{u defi}, and let $t_0\in (0,\infty)$. The conclusion of Lemma \ref{lem:local curves} holds for every choice of $t_1 \in (t_0,\infty)$.
\end{lem}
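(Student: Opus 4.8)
The plan is to promote the short-time construction of Lemma \ref{lem:local curves} to an arbitrary time horizon $[t_0,t_1]$ by a continuous-induction (or covering) argument, using the globally defined zero curves of Lemma \ref{lem:zerocurves} as barriers. Fix $t_1 \in (t_0,\infty)$, and fix $x_1 \in (\Lambda_{t_1},\infty)$ with $u_x(x_1,t_1)<0$. Let $\vep_0>0$ be such that Lemma \ref{lem:local curves} applies on every interval of length at most $\vep_0$ starting at any time level, where the key point is that $\vep_0$ can be chosen uniformly on $[t_0,t_1]$: indeed, by Lemma \ref{lem: right accum}, after any fixed time the frontier is $C^1$ with $u_x(\Lambda_t,t)>0$, and the estimates of Lemma \ref{lem: uxx ux bd} are uniform on $(-\infty,R)\times(R^{-2},\infty)$, so the construction in Lemma \ref{lem:local curves} carries over with a uniform lower bound on the length of the interval it produces. (One must take a little care near $t=t_0$ itself, but since $t_0>0$ all the estimates are still in force there.) Then I would cover $[t_0,t_1]$ by finitely many overlapping subintervals $[t_0,s_1], [s_1,s_2],\dots,[s_{N-1},t_1]$, each of length $<\vep_0$, and build $\gamma_1$ iteratively from right to left.

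The iteration runs as follows. On $[s_{N-1},t_1]$, Lemma \ref{lem:local curves} gives a continuous $\gamma_1$ with $\Lambda_t<\gamma_1(t)$ and $u_x(\gamma_1(t),t)<0$; in particular $u_x(\gamma_1(s_{N-1}),s_{N-1})<0$, so $x':=\gamma_1(s_{N-1})$ is a legitimate starting point for another application of Lemma \ref{lem:local curves} on $[s_{N-2},s_{N-1}]$. Concatenating the resulting curve with the piece already built (they match at $t=s_{N-1}$ by construction, and continuity is preserved) extends $\gamma_1$ to $[s_{N-2},t_1]$ with the same sign property. Repeating $N$ times yields $\gamma_1$ on all of $[t_0,t_1]$. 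The curve $\gamma_2$ is handled identically, starting from the point $x_2>x_1$ with $u_x(x_2,t_1)>0$: at each stage Lemma \ref{lem:local curves} produces $\gamma_2$ lying strictly to the right of $\gamma_1$ with $u_x(\gamma_2(t),t)>0$, and the ordering $\Lambda_t<\gamma_1(t)<\gamma_2(t)$ is preserved through the concatenation because it holds on each subinterval and the two descriptions agree at the overlap endpoints. Strictly speaking one should verify that the ordering propagates: since $u_x<0$ on $\gamma_1$ and $u_x>0$ on $\gamma_2$, the two curves cannot cross (a crossing would force $u_x=0$ there), so $\gamma_1<\gamma_2$ throughout.

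A cleaner alternative, which I would actually present, is a connectedness argument: let
\[
T:=\sup\{\, s\in[t_0,t_1] : \text{there is a continuous } \gamma_1:[s,t_1]\to(0,\infty),\ \Lambda_t<\gamma_1(t),\ u_x(\gamma_1(t),t)<0\ \forall t\in[s,t_1]\,\},
\]
and show $T=t_0$ and that the supremum is attained. The set is clearly an interval containing a left-neighborhood of $t_1$; if $T>t_0$, one applies Lemma \ref{lem:local curves} on $[T-\eta,T]$ for small $\eta$ (using uniformity of $\vep$) starting from any accumulation point of $\gamma_1(s)$ as $s\downarrow T$ — which exists and lies in $(\Lambda_T,\infty)$ because the zero curves of Lemma \ref{lem:zerocurves} keep $\gamma_1$ bounded away from the frontier and from $+\infty$ on any interval bounded away from $t_0$ — and concatenates, contradicting maximality. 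The same argument gives $\gamma_2$, or one runs the two constructions simultaneously.

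The main obstacle is the uniformity of $\vep$ across the time horizon and, relatedly, the compactness needed to pass to the limit at $t=T$: one must rule out that $\gamma_1(s)\to\Lambda_T$ or $\gamma_1(s)\to+\infty$ as $s\downarrow T$. Both are precisely what Lemma \ref{lem:zerocurves} was designed to exclude — the flanking zero curves $z^{\pm}$ through any point of $\{u_x=0\}$ on the relevant time slices are defined all the way to $t_0$ and stay strictly inside $\{u>0\}$ and finite — so the barrier structure of Lemma \ref{lem:zerocurves}, together with the semiconcavity bound of Lemma \ref{lem: uxx ux bd}, furnishes exactly the a priori control that upgrades the local statement to a global one. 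Once these two exclusions are in place, the concatenation/connectedness argument is routine.
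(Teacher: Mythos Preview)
Your central claim---that the $\vep$ from Lemma~\ref{lem:local curves} can be chosen uniformly over $[t_0,t_1]$---is unjustified, and in fact is essentially what the entire section is trying to prove. By its statement, that $\vep$ is a number for which $\Lambda\in C^1((s,s+\vep))$; it is therefore bounded above by the distance from the base time $s$ to the next jump of $\Lambda$. If jump times were to accumulate from the left at some $t_\infty\in(t_0,t_1)$---precisely the scenario that Theorem~\ref{thm: jumps} (which relies on the present lemma) is meant to rule out---then this distance tends to $0$ as the base time approaches $t_\infty$, and no uniform $\vep_0>0$ exists. Your reading of Lemma~\ref{lem: right accum} (``after any fixed time the frontier is $C^1$'') is too strong: that lemma furnishes only a one-sided interval $(t,t+\vep(t))$ of regularity, with no control on $\vep(t)$. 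Both of your proposed arguments (the finite cover and the ``cleaner alternative'') invoke this uniformity explicitly, so both are circular.

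The paper sidesteps the issue by running the induction in the other direction and never needing a uniform $\vep$. One sets $t^*:=\sup\{t\ge t_0: P(s)\text{ holds for all }s\in(t_0,t)\}$, where $P(s)$ is the conclusion of Lemma~\ref{lem:local curves} with $t_1:=s$. To show that $P(t^*)$ holds, one takes $x_1>\Lambda_{t^*}$ with $u_x(x_1,t^*)<0$ and uses only the \emph{interior} smoothness of $u$ at $(x_1,t^*)$ to drop along the vertical segment to $(x_1,t^*-\delta)$ where $u_x$ is still negative; then $P(t^*-\delta)$, already known by the definition of $t^*$, supplies the rest of $\gamma_1$ by concatenation (and likewise for $\gamma_2$). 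To pass beyond $t^*$ one then applies Lemma~\ref{lem:local curves} \emph{once}, with base time $t^*$ (which provides its own $\vep$), and concatenates with $P(t^*)$, contradicting maximality. Note also that your $T$ should be an infimum, and that your formulation creates the additional problem of making sense of ``an accumulation point of $\gamma_1(s)$ as $s\downarrow T$'' when $\gamma_1$ is not canonically defined---a difficulty that simply does not arise in the paper's direction of induction.
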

\begin{proof} For each $t\in (t_0,\infty)$, we denote by $P(t)$ the statement that the conclusion of Lemma \ref{lem:local curves} holds for $t_1:=t$. Assume, by contradiction, that $P(t)$ is false for some $t>t_0$. Then
\be t^*=\sup\{t\in [t_0,\infty): P(s)\text{ holds for all } s\in(t_0,t) \}<\infty. \ee 
By Lemma \ref{lem:local curves}, $t^*>t_0$. We claim that $P(t^*)$ holds. To show this, let $x_1 \in (\Lambda_{t^*},\infty)$ be such that $u_x(x_1,t^*)<0$. Then, since $u$ is smooth in the interior of its support, there exists $\delta>0$ such that $u_x<0$ on $Q_{\sqrt\delta}(x_1,t^*)$. Thus, $u_x<0$ along the straight line segment $L_1$ connecting $(x_1,t^*)$ and $(x_1,t^*-\delta/2)$. But, by the definition of $t^*$, it must be the case that $P(t^*-\delta/2)$ holds, and thus there exists some point $(t_0,y)$ and a continuous curve $t \mapsto (\tilde{\gamma}_1(t),t)$ connecting $(t^*-\delta/2,x_1)$ and $(t_0,y)$, along which $u_x<0$. Concatenating $\tilde{\gamma}_1$ with $L_1$, we obtain $\gamma_1:[t_0,t^*] \to (0,\infty)$ with the desired properties. If applicable, we can then construct $\gamma_2:[t_0,t^*]\to (0,\infty)$ in the same way. Therefore, we conclude that $P(t^*)$ holds. Now, again by Lemma \ref{lem:local curves}, followed by concatenation and the fact that $P(t^*)$ holds, we infer that $P(t^*+\vep)$ holds for all sufficiently small $\vep>0$. But this contradicts the maximality of $t^*$.
\end{proof}
The following lemma contains the key point, which roughly speaking is the following: fixing some positive time interval $(t_0,t_{\infty})$, every time the solution jumps at a time $t_*\in (t_0,t_{\infty})$, the physical jump condition forces an oscillation of $u$ at $t=t_*$ near the location of the jump, which can be propagated backwards via the curves of Lemma \ref{lem:global curves} to an oscillation at time $t=t_0$. Since such oscillations were proven to be locally finite at $t_0$ in Proposition \ref{prop: monotonicity changes}, this places a constraint on the number of times that the solution can jump in the interval $(t_0,t_{\infty})$.
\begin{lem}[Future jump implies past oscillation] \label{lem: jump implies oscillation}  Let $\Lambda$ be a physical solution to \eqref{supercooled eqs}, let $u$ be given by \eqref{u defi}, let $0<t_0<t_{\infty}<\infty$, and assume that there exists a continuous curve $\gamma: [t_0,t_{\infty}]\to (0,\infty)$ such that 
\begin{equation} \label{gam4210d} u(\gamma(t),t)>0, \quad u_x(\gamma(t),t)<0, \quad t\in [t_0,t_{\infty}), \quad\Lambda_{t_{\infty}-}<\gamma(t_{\infty}).\end{equation}
Suppose also that $t_*\in (t_0,t_{\infty})$ is a time of discontinuity of $\Lambda$, and assume that $t_*$ is not the largest such time in $(t_0,t_{\infty}]$.
 Then there exist continuous curves $\gamma_1,\gamma_2:[t_0,t_*]\to (0,\infty)$ such that
\begin{equation} \label{curvcond1202}\gamma_1(t)<\gamma_2(t)<\gamma(t), \quad t\in [t_0,t_*], \quad \Lambda_{t_*-}<\gamma_1(t_*),\end{equation}
and
\begin{equation} \label{curvcond1203}u(\gamma_1(t),t),\;u(\gamma_2(t),t)>0, \quad u_x(\gamma_1(t),t)<0, \quad u_x(\gamma_2(t),t)>0, \quad t\in [t_0,t_*). \end{equation}
\end{lem}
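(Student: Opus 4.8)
The plan is to realize the ``oscillation forced by the jump'' as a local minimum (a valley) of $u(\cdot,\tilde t)$ lying strictly between the front and the curve $\gamma$, at a time $\tilde t$ slightly below $t_*$, and then to carry this valley back to $t_0$ using the backward-propagation machinery of Lemma \ref{lem:global curves}. Concretely, I will exhibit points $x_1<x_2$ with $\Lambda_{\tilde t}<x_1<x_2<\gamma(\tilde t)$ and $u_x(x_1,\tilde t)<0<u_x(x_2,\tilde t)$, apply Lemma \ref{lem:global curves} with terminal time $\tilde t$ to get $\gamma_1<\gamma_2$ on $[t_0,\tilde t]$ with the desired signs of $u_x$, and then extend both curves to $[t_0,t_*]$ by the vertical segments $\gamma_i(t)\equiv x_i$ on $[\tilde t,t_*]$. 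Throughout, the curve $\gamma$ will act as a barrier that keeps $\gamma_2$ to its left.

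\emph{The down-point.} By Lemma \ref{lem: u large jump} there is $\bar x\in(\Lambda_{t_*^-},\Lambda_{t_*})$ with $u_x(\bar x,t_*^-)<0$. Since $\Lambda$ is non-decreasing, $\bar x>\Lambda_{t_*^-}\ge\Lambda_t$ for every $t\le t_*$, so $\bar x$ lies in $\{u>0\}$ for all $t<t_*$; by continuity of $u$ on $\{u>0\}$ there is $\eta>0$ with $u(\bar x,t)>0$ and $u_x(\bar x,t)<0$ on $(t_*-\eta,t_*)$. I take $x_1:=\bar x$; after the vertical-segment extension this already yields $\Lambda_t<\gamma_1(t)$ on $[t_0,t_*)$, $u(\gamma_1(t),t)>0$ and $u_x(\gamma_1(t),t)<0$ on $[t_0,t_*)$, and $\Lambda_{t_*^-}<\gamma_1(t_*)=\bar x$.

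\emph{The up-point, and where the hypothesis enters.} The assumption that $t_*$ is not the last discontinuity in $(t_0,t_\infty]$ is used to force a second large-density region ahead of the front. Pick a later jump time $t'\in(t_*,t_\infty]$. By Lemma \ref{lem: right accum}, $\Lambda$ is $C^1$ just to the right of $t_*$, so $u(\Lambda_t,t)=0$ and $u_x(\Lambda_t,t)>0$ for $t$ slightly above $t_*$; by Lemma \ref{lem: u large jump} applied at $t'$, $u(\Lambda_{t^-},t^-)$ is close to $u(\Lambda_{t'^-},t'^-)\ge\alpha^{-1}$ for $t$ slightly below $t'$. Hence the front value climbs from $0$ back above $\alpha^{-1}$ on $(t_*,t')$, which means the advancing front must run into a density peak lying beyond the immediate post-jump profile. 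Together with the decay imposed by $u_x(\bar x,\cdot)<0$ near $\bar x$, this produces a local minimum of $u(\cdot,\tilde t)$ strictly inside $(\bar x,\gamma(\tilde t))$ for every $\tilde t$ close enough to $t_*$; the precise statement (that such a valley persists as $\tilde t\uparrow t_*$ and that its location stays uniformly inside $(\bar x,\gamma(\tilde t))$) would be justified via the local description of $\{u_x=0\}$ in Lemma \ref{lem: zero set} together with continuity of $u$ in $\{u>0\}$, and it lets me pick an up-point $x_2\in(\bar x,\gamma(\tilde t))$ with $u_x(x_2,\tilde t)>0$ and with $u_x(x_2,\cdot)$ non-vanishing on $[\tilde t,t_*)$. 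Alternatively, the same up-point can be obtained by propagating the down-point of the jump at $t'$ backwards via Lemma \ref{lem:global curves} and using it, along with $\bar x$ and $\gamma$, as barriers that trap an up-point in between.

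\emph{Assembly and the ordering.} With $x_1=\bar x<x_2$ at the time $\tilde t$, Lemma \ref{lem:global curves} gives continuous $\gamma_1<\gamma_2$ on $[t_0,\tilde t]$ with $\Lambda_t<\gamma_1(t)$, $u(\gamma_i(t),t)>0$, and $u_x(\gamma_1(t),t)<0$, $u_x(\gamma_2(t),t)>0$ on the closed interval; extend each by $\gamma_i(t)\equiv x_i$ on $[\tilde t,t_*]$. For $\tilde t$ close enough to $t_*$, continuity of $\gamma$ gives $x_2<\min_{[\tilde t,t_*]}\gamma$, hence $\gamma_2<\gamma$ on $[\tilde t,t_*]$; and on $[t_0,\tilde t]$ the inequality $\gamma_2<\gamma$ follows from a sign conflict: if $\gamma_2$ and $\gamma$ first met, scanning leftwards from $\tilde t$, at some $s\in[t_0,\tilde t]$, then at the common point $u_x$ would be simultaneously $>0$ (along $\gamma_2$) and $<0$ (along $\gamma$, since $s<t_\infty$), which is impossible. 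Combined with $\gamma_1<\gamma_2$ (built into Lemma \ref{lem:global curves} on $[t_0,\tilde t]$ and trivial on $[\tilde t,t_*]$), this gives \eqref{curvcond1202}, while \eqref{curvcond1203} is immediate from the construction. The main obstacle is the up-point step: the later jump controls $u$ only at time $t'$, not at $t_*^-$, and one must transport the resulting density peak back to a genuine valley at times $\tilde t\uparrow t_*$ while keeping that valley uniformly away from both $\Lambda_t$ and $\gamma(t)$, so that both Lemma \ref{lem:global curves} and the barrier argument apply.
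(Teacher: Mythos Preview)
Your overall architecture matches the paper's: locate a down-point $x_1\in(\Lambda_{t_*^-},\Lambda_{t_*})$ with $u_x(x_1,t_*^-)<0$ via Lemma~\ref{lem: u large jump}, find an up-point $x_2\in(x_1,\gamma(t_*))$ with $u_x(x_2,t_*^-)>0$, drop to a slightly earlier time, apply Lemma~\ref{lem:global curves}, extend by constants, and use the sign conflict along $\gamma$ to guarantee $\gamma_2<\gamma$. The down-point and the assembly are correct and essentially identical to the paper's proof.

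The genuine gap, which you yourself flag, is the up-point. Your heuristic (``the front value climbs from $0$ back above $\alpha^{-1}$ on $(t_*,t')$, so there must be a density peak beyond the post-jump profile, hence a valley at $\tilde t<t_*$'') conflates times: the climbing happens \emph{after} $t_*$, while you need $u_x(x_2,\tilde t)>0$ \emph{before} $t_*$. Neither Lemma~\ref{lem: zero set} nor the alternative of propagating the down-point of the jump at $t'$ backward produces an up-point; both give curves along which $u_x\le 0$, and there is no mechanism offered for trapping a point where $u_x>0$ between them. So as written this step is not a proof.

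The paper closes this gap with a forward maximum-principle argument that is both short and decisive. Suppose, for contradiction, that $u(\cdot,t_*)$ is non-increasing on $(\Lambda_{t_*},\gamma(t_*))$; since $u(\Lambda_{t_*},t_*^-)\le\alpha^{-1}$ by Lemma~\ref{lem: u large jump}, one gets $u(\cdot,t_*)\le\alpha^{-1}$ on $[\Lambda_{t_*},\gamma(t_*)]$. Now run the (globally subcaloric) $u$ forward on the region $\Omega=\{(x,t):t\in[t_*,t_\infty),\ \Lambda_{t_*}<x<\gamma(t)\}$: the bottom and left boundaries carry $u\le\alpha^{-1}$, the right boundary $\gamma$ has $u_x<0$, and the possible discontinuity at $(\Lambda_{t_*},t_*)$ is handled by the upper bound on $u_t$ from Proposition~\ref{prop: semiconvexity}. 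The maximum principle (then the strong version) gives $u<\alpha^{-1}$ in $\Omega$ for $t>t_*$, which by Lemma~\ref{lem: u large jump} forbids any jump in $(t_*,t_\infty]$, contradicting the non-maximality of $t_*$. This is exactly the rigorous version of your ``the front must run into a density peak'' intuition, and it delivers the up-point $x_2$ at time $t_*^-$ directly, without any backward transport from $t'$.
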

\begin{proof}
By Lemma \ref{lem: u large jump}, we have $u(\Lambda_{t_*-},t_*-)\geq \alpha^{-1}$, $u(\Lambda_{t_*},t_*-)\leq \alpha^{-1}$ and $u_x(x_1,t_*-)<0$ for some $x_1\in (\Lambda_{t_*-},\Lambda_{t_*})$. We claim that there also exists $x_2\in (\Lambda_{t_*-},\gamma(t_*))$ such that $u_x(x_2,t_*-)=u_x(x_2,t_*)>0$. Indeed, assume by contradiction that $u(\cdot,t_*)$ is nonincreasing on $(\Lambda_{t_*},\gamma(t_*))$, so that $u(\cdot,t_*)\leq \alpha^{-1}$ on $[\Lambda_{t_*},\gamma(t_*)]$, and let $\Omega=\{(x,t): t\in [t_*,t_{\infty}), x\in (\Lambda_{t_*},\gamma(t))\}$. Then $u$ is continuous at all points in $\partial_p{\Omega}\backslash \{(\Lambda_{t_*},t_*)\}$, with $u\leq \alpha^{-1}$ on the left and bottom sides of $\partial_p{\Omega}$. In addition, since $u_t$ is locally bounded above by Proposition \ref{prop: semiconvexity}, we have
\[ \limsup_{(x,t)\to (\Lambda_{t_*},t_*), (x,t)\in \Omega}u(x,t)\leq \alpha^{-1}.\]
Finally, on the right side of the parabolic boundary of $\Omega$, $u$ is smooth, with $u_x<0$. Since, by Proposition \ref{prop: u eq}, $u$ is globally subcaloric, it follows by the maximum principle that $u\leq \alpha^{-1}$ in $\Omega$. A posteriori, Lemmas \ref{lem: smooth ext}, \ref{lem: lim past regu}, and \ref{lem: right accum}, combined with the strong maximum principle, imply that \[u\leq \alpha^{-1}(1-c_{\delta})<\alpha^{-1} \;\text{ in }\;\{(x,t)\in \Omega: t\in (t_*+\delta,t_{\infty})\}\] for any $\delta>0$ and sufficiently small $c_{\delta}>0$. But, by Lemma \ref{lem: u large jump} and \eqref{gam4210d}, this means that $\Lambda$ cannot have any discontinuities in $(t_*,t_{\infty}]$, which contradicts our assumption that $t_*$ is not maximal. We have then proved that there exist points $x_1, x_2$, with $\Lambda_{t_*-}<x_1<x_2<\gamma(t_*)$ such that $u_x(x_1,t_*-)<0$ and $u_x(x_2,t_*-)>0$. By Lemma \ref{lem: smooth ext}, we have for some small $\vep>0$,
\begin{equation} \label{0qkiw0qdedqequ}(-1)^{i+1}u_x(x,t)<0, \quad x_1<x_2<\gamma(t),\quad (x,t)\in Q^-_{\sqrt{\vep}}(x_i,t_*), \quad i\in \{1,2\}.\end{equation}
 Therefore, by Lemma \ref{lem:global curves}, there exist curves $\gamma_1,\gamma_2:[t_0,t_*-\vep/2]\to (0,\infty)$, $\gamma_1<\gamma_2$, such that $\gamma_1(t_*-\vep/2)=x_1$, $\gamma_2(t_*-\vep/2)=x_2$, and \eqref{curvcond1203} holds for $t\in [t_0,t_*-\vep/2]$. Since $u_x(\gamma_2(t),t)>0>u_x(\gamma(t),t)$ and $\gamma_2(t_*-\vep/2)=x_2<\gamma(t_*-\vep/2)$, we have $\gamma_1<\gamma_2<\gamma$. Finally, in view of \eqref{0qkiw0qdedqequ}, we may then extend $\gamma_1$ and $\gamma_2$ continuously to $[t_0,t_*]$, while still satisfying \eqref{curvcond1202} and \eqref{curvcond1203}, by simply letting them be constant on $[t_*-\vep/2,t_*]$.
\end{proof}
We can now prove the main result of this section by iterating Lemma \ref{lem: jump implies oscillation}.

\begin{figure}

\tikzset{every picture/.style={line width=0.75pt}} 

\begin{tikzpicture}[x=0.75pt,y=0.75pt,yscale=-1,xscale=1]

\draw  (50,254.5) -- (554.5,254.5)(100.45,16) -- (100.45,281) (547.5,249.5) -- (554.5,254.5) -- (547.5,259.5) (95.45,23) -- (100.45,16) -- (105.45,23)  ;
\draw    (99,62) -- (520.5,62.67) ;
\draw    (100.45,254.5) .. controls (140.45,224.5) and (128.17,189.33) .. (142.92,189.01) ;
\draw    (142.92,189.01) -- (172.06,188.82) ;
\draw    (172.06,188.82) .. controls (185.17,188.33) and (181.83,140) .. (201.29,140.97) ;
\draw    (201.29,140.97) -- (222.57,140.53) ;
\draw    (222.57,140.53) .. controls (233.83,140) and (225.83,113) .. (237.57,112.53) ;
\draw    (237.57,112.53) -- (250.57,112.53) ;
\draw    (250.57,112.53) .. controls (262.17,113.33) and (259.17,89.33) .. (269.21,90.31) ;
\draw    (386.5,63) .. controls (356.33,104.67) and (429.88,241.8) .. (387.83,254.67) ;
\draw [color={rgb, 255:red, 208; green, 2; blue, 27 }  ,draw opacity=1 ]   (347.83,254) .. controls (277.83,238) and (259.83,135) .. (260.17,112.33) ;
\draw [color={rgb, 255:red, 74; green, 144; blue, 226 }  ,draw opacity=1 ]   (322.5,254.33) .. controls (234.5,232.33) and (243.83,147) .. (243.83,112.67) ;
\draw [color={rgb, 255:red, 74; green, 144; blue, 226 }  ,draw opacity=1 ]   (198.5,254.67) .. controls (223.17,239.33) and (208.83,170) .. (211.93,140.75) ;
\draw [color={rgb, 255:red, 208; green, 2; blue, 27 }  ,draw opacity=1 ]   (277.17,254.33) .. controls (238.83,237) and (229.83,159) .. (230.17,140.33) ;
\draw [color={rgb, 255:red, 208; green, 2; blue, 27 }  ,draw opacity=1 ]   (174.5,254.33) .. controls (149.83,225) and (183.83,212) .. (185.17,189.33) ;
\draw [color={rgb, 255:red, 74; green, 144; blue, 226 }  ,draw opacity=1 ]   (112.83,254) .. controls (170.83,239) and (154.83,213) .. (157.49,188.91) ;
\draw  [dash pattern={on 0.84pt off 2.51pt}]  (101.57,112.03) -- (519.5,112.67) ;
\draw  [dash pattern={on 0.84pt off 2.51pt}]  (100.57,141.03) -- (518.5,141.67) ;
\draw  [dash pattern={on 0.84pt off 2.51pt}]  (100.57,189.03) -- (518.5,189.67) ;

\draw (280,69) node [anchor=north west][inner sep=0.75pt]  [font=\normalsize] [align=left] {$\displaystyle \vdots $};
\draw (135,141) node [anchor=north west][inner sep=0.75pt]  [font=\footnotesize] [align=left] {$\displaystyle x=\Lambda _{t}$};
\draw (379,105) node [anchor=north west][inner sep=0.75pt]   [align=left] {$ $};
\draw (409,118) node [anchor=north west][inner sep=0.75pt]  [font=\footnotesize] [align=left] {$\displaystyle x=\gamma ( t)$};
\draw (300,197) node [anchor=north west][inner sep=0.75pt]  [font=\tiny,color={rgb, 255:red, 208; green, 2; blue, 27 }  ,opacity=1 ] [align=left] {$\displaystyle x=\gamma _{2}^{3}( t)$};
\draw (212,95) node [anchor=north west][inner sep=0.75pt]  [font=\tiny,color={rgb, 255:red, 74; green, 144; blue, 226 }  ,opacity=1 ] [align=left] {$\displaystyle x=\gamma _{1}^{3}( t)$};
\draw (63,54) node [anchor=north west][inner sep=0.75pt]  [font=\scriptsize] [align=left] {$\displaystyle t=t_{\infty }$};
\draw (62,183) node [anchor=north west][inner sep=0.75pt]  [font=\scriptsize] [align=left] {$\displaystyle t=t_{1}$};
\draw (61,137) node [anchor=north west][inner sep=0.75pt]  [font=\scriptsize] [align=left] {$\displaystyle t=t_{2}$};
\draw (61,104) node [anchor=north west][inner sep=0.75pt]  [font=\scriptsize] [align=left] {$\displaystyle t=t_{3}$};
\draw (134,173) node [anchor=north west][inner sep=0.75pt]  [font=\tiny,color={rgb, 255:red, 74; green, 144; blue, 226 }  ,opacity=1 ] [align=left] {$\displaystyle x=\gamma _{1}^{1}( t)$};
\draw (177,126) node [anchor=north west][inner sep=0.75pt]  [font=\tiny,color={rgb, 255:red, 74; green, 144; blue, 226 }  ,opacity=1 ] [align=left] {$\displaystyle x=\gamma _{1}^{2}( t)$};
\draw (209.17,257.33) node [anchor=north west][inner sep=0.75pt]  [font=\tiny,color={rgb, 255:red, 208; green, 2; blue, 27 }  ,opacity=1 ] [align=left] {$\displaystyle x=\gamma _{2}^{2}( t)$};
\draw (157,257) node [anchor=north west][inner sep=0.75pt]  [font=\tiny,color={rgb, 255:red, 208; green, 2; blue, 27 }  ,opacity=1 ] [align=left] {$\displaystyle x=\gamma _{2}^{1}( t)$};
\draw (65,256) node [anchor=north west][inner sep=0.75pt]  [font=\scriptsize] [align=left] {$\displaystyle t=t_{0}$};
\draw (73,69) node [anchor=north west][inner sep=0.75pt]  [font=\normalsize] [align=left] {$\displaystyle \vdots $};
\draw (358,236) node [anchor=north west][inner sep=0.75pt]  [font=\normalsize] [align=left] {$\displaystyle \cdots $};
\draw (546,264) node [anchor=north west][inner sep=0.75pt]  [font=\footnotesize] [align=left] {$\displaystyle x$};
\draw (109,9) node [anchor=north west][inner sep=0.75pt]  [font=\footnotesize] [align=left] {$\displaystyle t$};

\end{tikzpicture}
\caption{Correspondence between jumps of $\Lambda$ at $t=t_1,t_2,\ldots,t_N$ and oscillations of $u$ at $t=t_0$. The curve $x=\gamma(t)$ acts as a barrier keeping all the curves in a compact region.}
\end{figure}
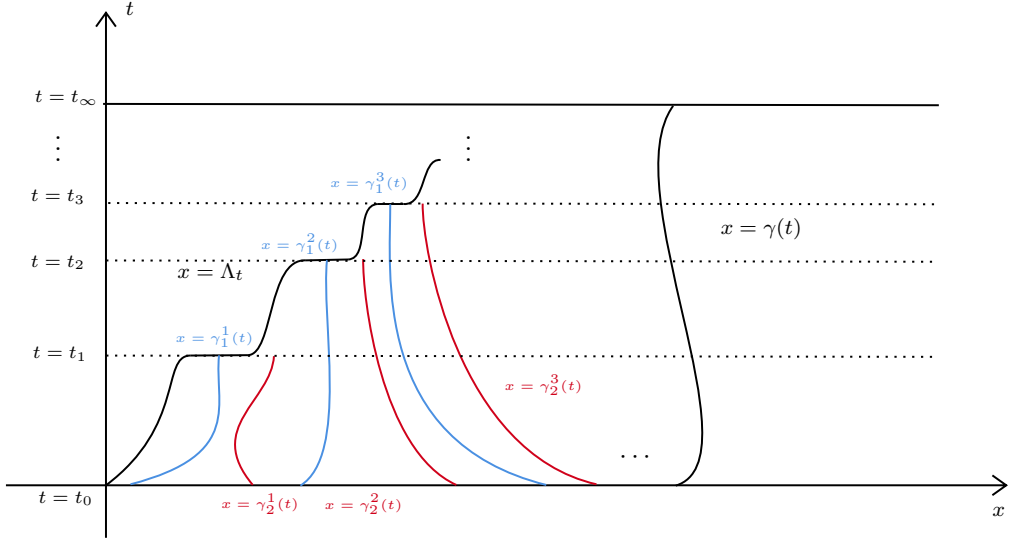

\begin{proof}[Proof of Theorem \ref{thm: jumps}]Proceeding by contradiction, assume that $t_{\infty}\in (0,\infty)$ is an accumulation point for the set of discontinuities of $\Lambda$, and fix $t_0<t_{\infty}$. By Lemma \ref{lem: right accum}, the discontinuities cannot accumulate to the right of $t_0$, so there exists a strictly increasing sequence $\{t_n\}_{n=1}^{\infty}$ such that $\Lambda_{t_n}>\Lambda_{t_n^-}$ and $t_n \uparrow t_{\infty}$ as $n\to \infty$. Since $\int_{\Lambda_{t_{\infty}}}^{\infty}u(x,t_{\infty})dx=\P(\tau>t_{\infty})<\infty$, there exists $x_*>\Lambda_{t_{\infty}}$ such that $u_x(x_*,t_{\infty})<0$. Therefore, by Lemma \ref{lem:global curves}, there exists a continuous curve $\gamma:[t_0,t_{\infty}] \to (0,\infty) $ such that \eqref{gam4210d} holds. Let $N\gg1$ be a fixed natural number. By Lemma \ref{lem: jump implies oscillation}, there exist continuous curves $\gamma_1^N,\gamma_2^N:[t_0,t_N] \to (0,\infty)$ such that
\begin{equation}
    \gamma_1^N(t)<\gamma^N_2(t)<\gamma(t), \quad  t\in[t_0,t_N], \quad  \gamma_1^N(t_N)>\Lambda_{t_N^-},
\end{equation}
\begin{equation} \label{edqpoeq131d}
  u(\gamma_1^N(t),t),\;u(\gamma_2^N(t),t)>0, \quad u_x(\gamma_1^N(t),t)<0, \quad u_x(\gamma_2^N(t),t)>0, \quad t\in[t_0,t_N).
\end{equation}
Applying Lemma \ref{lem: jump implies oscillation} once more with $\gamma:=\gamma_1^N$, $t_*:=t_{N-1}$, and $t_{\infty}:=t_N$, we obtain continuous curves $\gamma_{1}^{N-1}, \gamma_{2}^{N-1}:[t_0,t_{N-1}]\to (0,\infty)$ such that
\begin{equation}
    \gamma_{1}^{N-1}(t)<\gamma_2^{N-1}(t)<\gamma_1^N(t)<\gamma^N_2(t)<\gamma(t), \quad  t\in[t_0,t_{{N-1}}], \quad\gamma_1^{N-1}(t_{N-1})>\Lambda_{t_{N-1}^-},
\end{equation}
and \eqref{edqpoeq131d} holds with $N$ replaced by $N-1$.
Proceeding in this way, we infer that there exists a finite sequence of ordered continuous curves $\Lambda<\gamma_1^1<\gamma_2^1<\gamma_1^2<\gamma_2^2<\cdots <\gamma_1^N<\gamma_2^N<\gamma$ defined on $[t_0,t_1]$ such that 
\[u_x(\gamma_{1}^i(t),t)<0, \quad u_x(\gamma_2^i(t),t)>0,\quad t\in [t_0,t_1], \quad i\in \{1,\ldots,N\}. \]
In particular, setting $t=t_0$, we see that $u(\cdot,t_0)$ changes monotonicity at least $2N$ times on $[\Lambda_{t_0},\gamma(t_0)]$. Since $N$ can be arbitrarily large, this is a contradiction to Proposition \ref{prop: monotonicity changes}.
\end{proof}

\section{Regularity of physical solutions and their free boundary} \label{sec:regularity}
In this section, we will prove the regularity results of Theorem \ref{thm: C1 regu} and Theorem \ref{thm: classification}. We recall that regular and singular points were defined in Definition \ref{def: reg sing}, and that
\begin{equation}
    J=\{x\in \R: \Lambda_{t-}<x<\Lambda_{t} \text{ for some } t\in (0,\infty)\}.
\end{equation}
We begin by performing a classical blow-up analysis at the free boundary points located away from the jumps. We note that the following proposition would not be very useful if we had not already proved, in the previous section, that jumps do not have accumulation points.
\begin{prop}[Classification of blow-ups] \label{prop: blow-ups} Let $\Lambda$ be a physical solution to \eqref{supercooled eqs}, let $u$ be given by \eqref{u defi}, let $s$ be as in \eqref{s defi}, and let $w$ be as in \eqref{w defi}. Let $x_0\in (\Lambda_0,\alpha)$, assume that $x_0\notin \overline{J}$, and let $t_0=s(x_0)$. Then $u(x_0,t_0-)\in \{0,\alpha^{-1}\}$. Furthermore,  for $r\ll1$, letting
\[ w_r(x,t)=r^{-2}w(x_0+rx,t_0+r^2t),\]
there exists $p_2:\R \times (-\infty,0] \to \R$ such that
\begin{equation} \label{eq: blow up lim}\lim_{r\to 0^+}w_r(x,t)=p_2(x,t) \quad \text{locally uniformly in } \R \times (-\infty,0],\end{equation}
where
\begin{equation} \label{blow up types}
    p_2(x,t)=\begin{cases} \alpha^{-1}(x^{+})^2 & \text{if } u(x_0,t_0-)=0, \\
    -\alpha^{-1}t & \text{if } u(x_0,t_0-)=\alpha^{-1}.        
    \end{cases}
\end{equation}
The points $x_0 \in (\Lambda_0,\alpha)\backslash \overline{J}$ such that $u(x_0,s(x_0)-)=0$ form an open subset $\tilde{U}$ of $(\Lambda_0,\alpha)\backslash \overline{J}$ with countable complement, and we have
\[ s\in C^{\infty}(\tilde{U}),\quad \Lambda\in C^{\infty}(\tilde{U}),\]
with $s'>0$ in $\tilde{U}$.
\end{prop}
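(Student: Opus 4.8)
The plan is to localize near $(x_0,t_0):=(x_0,s(x_0))$, recognize $w$ as a solution of the standard parabolic obstacle problem there, and run a classical blow-up analysis, using the special ``subgraph'' geometry of the coincidence set to pin down the possible blow-ups. First, since $x_0\notin\overline J$ and $\overline J$ is closed, there is $\rho>0$ with $(x_0-\rho,x_0+\rho)\cap\overline J=\emptyset$; by the definition of $J$ and Theorem~\ref{thm: jumps} this forces $\Lambda$ to be continuous on a time-neighborhood of $t_0$, with $\Lambda_{t_0}=\Lambda_{t_0^-}=x_0$, and $\nu\equiv\alpha^{-1}$ on $(x_0-\rho,x_0+\rho)$. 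Hence, on a small box $Q=(x_0-\rho,x_0+\rho)\times(t_0-\delta,t_0+\delta)$, Proposition~\ref{prop: w eq} says that $w$ solves $w_t-\tfrac12 w_{xx}=-\alpha^{-1}\chi_{\{w>0\}}$ with $w\ge0$, $w_t\le0$, $\{w>0\}=\{x>\Lambda_t\}$; in particular $(x_0,t_0)$ is a free boundary point and $\{w=0\}\cap Q=\{(x,t)\in Q:x\le\Lambda_t\}$, which at each time slice is a half-line unbounded to the left.

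Next, I would carry out the blow-up. Since $w(x_0,t_0)=0$, $w_x(x_0,t_0)=0$ and $w$ vanishes for $t\ge t_0$ along $x=x_0$, the $C^{1,1}_x\cap C^{0,1}_t$ estimates of Proposition~\ref{prop: w eq} and the linear non-degeneracy of Proposition~\ref{prop: u nondegeneracy} (which integrates in time to $\sup_{Q_r^-(x_0,t_0)}w\ge c\,r^2$) make the rescalings $w_r(x,t)=r^{-2}w(x_0+rx,t_0+r^2t)$ uniformly bounded and uniformly non-degenerate on compact subsets of $\R\times(-\infty,0]$, and precompact in $C^1_{\mathrm{loc}}$. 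Any subsequential limit $p$ is a global solution of the obstacle problem on $\R\times(-\infty,0]$, and by the parabolic Weiss-type monotonicity formula it is parabolically $2$-homogeneous (see \cite{CafPetSha}). In one space dimension the $2$-homogeneous global solutions are classified (\cite{CafObstacle,CafPetSha}): $p$ is either a half-space solution $\alpha^{-1}\big((\pm x)^+\big)^2$ or a polynomial $c\,x^2-b\,t$ with $c,b\ge0$ and $b+c=\alpha^{-1}$.

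To identify $p$, differentiate $w_r(0,t)=r^{-2}w(x_0,t_0+r^2t)$ in $t$: this gives $\partial_t w_r(0,t)=-u(x_0,t_0+r^2t)\to-u(x_0,t_0^-)=:-\beta$ as $r\to0$ for each $t<0$, the limit existing by Lemma~\ref{lem: lim past regu}; hence $\partial_t p(0,\cdot)\equiv-\beta$. I would then discard the polynomial solutions with $c>0$. Because $\{w_r=0\}$ is at each time slice a half-line unbounded to the left and $\{w_r=0\}\to\{p=0\}$ locally (by non-degeneracy), the coincidence set of $p$ at any $t<0$ must be a half-line $(-\infty,c(t)]$, empty, or all of $\R$; this already excludes $p=\alpha^{-1}x^2$. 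If $p=c\,x^2-b\,t$ with $c,b>0$ occurred, its coincidence set would be empty for $t<0$, forcing the coincidence half-line of $w_r$ to escape, i.e.\ $(x_0-\Lambda_{t_0+r^2t})/r\to\infty$ for each $t<0$; but then, for any fixed $x<0$, the point $(x_0+rx,t_0+r^2t)$ lies in $\{w>0\}$ at distance $\gg r$ from the frontier, and using the continuity of $u(\cdot,t_0^-)$ at $x_0$ (Lemma~\ref{lem: lim past regu}) one finds $w_r(x,t)\to-\beta t$ independently of $x$, so $p\equiv-\beta t$ and the equation forces $\beta=\alpha^{-1}$, contradicting $0<b=\beta<\alpha^{-1}$. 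Thus $p\in\{\alpha^{-1}\big((\pm x)^+\big)^2,-\alpha^{-1}t\}$, and since $\{w=0\}$ sits to the \emph{left} of the frontier the reflection is excluded, so $p\in\{\alpha^{-1}(x^+)^2,-\alpha^{-1}t\}$ and $\beta=u(x_0,t_0^-)\in\{0,\alpha^{-1}\}$. As $\beta$ is independent of the subsequence and determines $p$, the full convergence \eqref{eq: blow up lim} holds with $p_2$ as in \eqref{blow up types}.

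Finally, if $u(x_0,t_0^-)=0$ the blow-up is the half-space solution, so $(x_0,t_0)$ is a regular free boundary point; the regularity theory for the parabolic obstacle problem (\cite{CafObstacle,CafPetSha}) then gives that near $(x_0,t_0)$ the free boundary is the graph $x=\Lambda_t$ of a $C^{1,\gamma}$ function, hence, after parabolic bootstrap (and \cite{CafPetSha} for analyticity), of a $C^\infty$ function; combined with the velocity identity $\dot\Lambda_t=\tfrac\alpha2 u_x(\Lambda_t,t)$ and the non-degeneracy $u_x(\Lambda_t^+,t)\ge c>0$, this makes $\Lambda$ of class $C^\infty$ near $t_0$ with $\dot\Lambda\in(0,\infty)$, whence $s=\Lambda^{-1}$ is $C^\infty$ near $x_0$ with $s'(x_0)>0$, and $u$ extends smoothly up to $\{x\ge\Lambda_t\}$. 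Since every free boundary point near a regular one is regular and a neighborhood of $x_0$ lies in $(\Lambda_0,\alpha)\setminus\overline J$, this shows $\tilde U$ is open with $s'>0$ and $s,\Lambda\in C^\infty$ there; moreover $(\Lambda_0,\alpha)\setminus\overline J\setminus\tilde U$ consists exactly of the singular free boundary points of the obstacle problem (blow-up $-\alpha^{-1}t$, zero-density coincidence set), hence is at most countable by the structure theorem for the singular set in one space dimension (\cite{CafPetSha}). The main obstacle is the blow-up classification of the third paragraph---excluding the intermediate polynomials $c\,x^2-b\,t$, $0<b<\alpha^{-1}$, equivalently showing that the critical case (ii) of \cite{delarue} never arises at a non-jump point---where the subgraph geometry of $\{w=0\}$, the sign $w_t\le0$, the non-degeneracy, and the continuity of $u(\cdot,t_0^-)$ all come into play.
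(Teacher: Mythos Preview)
Your approach is essentially the paper's: localize to the constant-weight parabolic obstacle problem, invoke the classical blow-up trichotomy, and use the subgraph geometry of $\{w=0\}$ to discard the unwanted profiles. The identification $\partial_t p(0,\cdot)=-\beta$ via $w_r(0,t)=\int_t^0 u(x_0,t_0+r^2\tau)\,d\tau\to -\beta t$ is fine, and your treatment of the regular case (openness of $\tilde U$, smoothness, $s'>0$) matches the paper.

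The only real issue is your exclusion of the intermediate polynomials $p=c x^2-bt$ with $c,b>0$. You argue that the rescaled frontier escapes to $-\infty$, and then claim that ``using the continuity of $u(\cdot,t_0^-)$ at $x_0$ (Lemma~\ref{lem: lim past regu})'' one gets $w_r(x,t)\to -\beta t$ for $x<0$. But Lemma~\ref{lem: lim past regu} only gives continuity of the extension on $[\Lambda_{t_0^-},\infty)\times(0,t_0]=[x_0,\infty)\times(0,t_0]$; it says nothing about approaching $(x_0,t_0)$ through points with spatial coordinate $<x_0$, which is exactly what you need for $x<0$. Even granting the escape of the frontier and interior parabolic regularity (which would give $u_r(x,t)\to -\partial_t p(x,t)=b$), you only recover $b=\beta$ at $x=0$; this does not by itself force $c=0$.

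The paper's fix is a one-liner, and in fact you already have it implicitly: evaluate at $t=0$ rather than $t<0$. For every $x\le 0$ one has $w_r(x,0)=r^{-2}w(x_0+rx,t_0)=0$ (since $x_0+rx\le x_0=\Lambda_{t_0}$), so $p(x,0)=0$ on $(-\infty,0]$. This immediately kills $p=\alpha^{-1}(x^-)^2$ and any polynomial $cx^2-bt$ with $c>0$, leaving only $p\in\{\alpha^{-1}(x^+)^2,\,-\alpha^{-1}t\}$. This is exactly the geometric argument you used to discard $\alpha^{-1}x^2$, applied at the time slice $t=0$ instead of $t<0$; no appeal to continuity of $u$ from the left of $x_0$ is needed.
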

\begin{proof} Since $x_0\notin \overline{J}$, \eqref{nu defi} and Proposition \ref{prop: w eq} imply that, in some neighborhood of $(x_0,t_0)$, $w$ solves
\begin{equation} \begin{cases}w_t-\frac{1}{2}w_{xx}=-\alpha^{-1}\chi_{w>0} &  \\
    w_t \leq 0, \;\{w>0\}=\{w_t<0\}.       
    \end{cases}       \end{equation}
Therefore, the classical theory for the parabolic obstacle problem (see \cite[Prop. 5.5, Thm. 3.9]{BlaDolMon}) implies that \eqref{eq: blow up lim} holds for some uniquely determined $p_2$, where 
\begin{equation} \label{trichotomyq2-0ewod}
    p_2=\alpha^{-1}(x^+)^2 \quad \text{or} \quad p_2=\alpha^{-1}(x^-)^2 \quad \text{ or} \quad p_2=\alpha^{-1}(-mt+(1-m)x^2), \quad m\in [0,1]
\end{equation}
 So to prove \eqref{blow up types}, we only need to show that, if $p_2$ is of the third type in \eqref{trichotomyq2-0ewod}, then necessarily $\alpha^{-1}m=u(x_0,t_0-)$ and $m=1$, and that the second possibility in \eqref{trichotomyq2-0ewod} cannot occur. Note first that \eqref{eq: blow up lim} implies the expansion
\begin{equation} \label{taylor dpoqkwdp} w(x,t)=\alpha^{-1}(m(t_0-t)+(1-m)(x-x_0)^2)+o((t_0-t)+|x-x_0|^2),  \quad t\leq t_0.\end{equation}
Therefore, in particular, letting $x=x_0$ we have the one-sided expansion
\[w(x_0,t)=\alpha^{-1}m(t_0-t)+o((t_0-t)),  \quad t\leq t_0,\]
as $t \uparrow t_0$. Recalling that $w_t=-u$, L'H\^opital's rule and Lemma \ref{lem: lim past regu} therefore yield
\[u(x_0,t_0-)=\alpha^{-1}m.\]
On the other hand, if we assume that $m<1$, then \eqref{taylor dpoqkwdp} implies
\[w(x,t_0)=\alpha^{-1}(1-m)(x-x_0)^2+o(|x-x_0|^2).\]
But this would mean that $u(x,s(x_0))>0$ for $0<|x-x_0|\ll1$, which is clearly not possible by the geometry of the free boundary (in fact, $u(x,s(x_0))=0$ for $x<x_0$). Thus $m=1$, and $u(x_0,t_0-)=\alpha^{-1}$ whenever $p_2$ has the third form in \eqref{trichotomyq2-0ewod}. By a similar argument, we rule out the possibility  that $p_2=\alpha^{-1}(x^{-})^2$, since it would imply that $u(x,s(x_0))>0$ for $0<x_0-x\ll1$. 

We have shown that $\tilde{U}$ equals the set of points $x_0 \in (\Lambda_0,\alpha)\backslash \overline{J}$ such that the blow-up $p_2$ has the first form in \eqref{blow up types}. Thus, the fact that $\tilde{U}$ is open and that $\Lambda \in C^{\infty}(s(\tilde{U}))$ is a classical result about the parabolic obstacle problem (see, for instance, \cite[Lem 4.2]{BlaDolMon}, \cite[Thm. II]{CafPetSha}). On the other hand, by Lemma \ref{lem: right accum} and Proposition \ref{prop: u nondegeneracy}, for any $x_0 \in \tilde{U}$, and some small constant $c_0>0$,
\begin{equation}
    \dot \Lambda_{s(x_0)}=\lim_{t\downarrow s(x_0)}\dot \Lambda_{t}=\frac{\alpha}{2}\lim_{t\downarrow s(x_0)}u_x(\Lambda_t,t) \geq \frac{\alpha}{2}c_0>0,
\end{equation}
which implies that $s=\Lambda ^{-1}$ is also smooth in $\tilde{U}$.
\end{proof}
We now show that the regular and singular points, as given by Definition \ref{def: reg sing}, exhaust the free boundary. We also show that, thanks to Theorem \ref{thm: jumps}, then the set $\partial J$ (in the subspace topology of $(\Lambda_0,\alpha)$) consists exclusively of the endpoints of individual jumps.
\begin{lem} \label{lem: J topology} Let $\Lambda$ be a physical solution to \eqref{supercooled eqs} and let $s$ be given by \eqref{s defi}. Let $R$ and $S$ be, respectively, the regular and singular points. Then $R\cap S= \emptyset$ and $R\cup S=(\Lambda_0,\alpha)$. Moreover, if $\overline{J}$ denotes the closure of $J$ relative to $(\Lambda_0,\alpha)$, then
\begin{equation} \label{eq: J closure}\overline{J}=\{x\in (\Lambda_0,\alpha): \Lambda_{s(x)-}<\Lambda_{s(x)}, \quad x\in [\Lambda_{s(x)-},\Lambda_{s(x)}]\}.\end{equation}
    
\end{lem}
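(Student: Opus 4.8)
The plan is to establish the three assertions in the order: first the topological identity \eqref{eq: J closure} for the relative closure $\overline J$, then the exhaustion $R\cup S=(\Lambda_0,\alpha)$, and finally the disjointness $R\cap S=\emptyset$. The identity \eqref{eq: J closure} is the main tool for the other two, and it is the only place where Theorem \ref{thm: jumps} is used.

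I would begin from two elementary facts valid for every $x\in(\Lambda_0,\alpha)$: that $\Lambda_{s(x)^-}\le x\le\Lambda_{s(x)}$, which is immediate from \eqref{s defi} and the monotonicity and one-sided limits of $\Lambda$; and that $s(x)>0$, since $x>\Lambda_0$ and the right-continuity of $\Lambda$ give $\Lambda_t<x$ for $t$ in a right neighborhood of $0$, so $\{t:\Lambda_t>x\}$ is bounded away from $0$. The inclusion $\supseteq$ in \eqref{eq: J closure} then follows at once: if $\Lambda_{s(x)^-}<\Lambda_{s(x)}$ and $x\in[\Lambda_{s(x)^-},\Lambda_{s(x)}]$, then $x$ lies in the closure of the nonempty open interval $(\Lambda_{s(x)^-},\Lambda_{s(x)})\subseteq J$, hence in $\overline J$. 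For the reverse inclusion, any $x\in J$ lies in the right-hand set because $\Lambda_{s(x)^-}<x<\Lambda_{s(x)}$, so it suffices to treat $x\in\overline J\setminus J$. Choosing $x_n\in J$ with $x_n\to x$, each $t_n:=s(x_n)$ is a discontinuity time of $\Lambda$ with $x_n\in(\Lambda_{t_n^-},\Lambda_{t_n})$. If $\{t_n\}$ has a subsequence equal to a common value $t_*$, then $x\in[\Lambda_{t_*^-},\Lambda_{t_*}]$, and since $x\notin J\supseteq(\Lambda_{t_*^-},\Lambda_{t_*})$ this forces $x\in\{\Lambda_{t_*^-},\Lambda_{t_*}\}$ and $s(x)=t_*$, so $x$ belongs to the right-hand set of \eqref{eq: J closure}. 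If no such subsequence exists, we may assume the $t_n$ are pairwise distinct, and then the continuity of $s$ (Lemma \ref{lem: lambda strict incr}) gives $t_n=s(x_n)\to s(x)>0$, so $s(x)$ is an accumulation point in $(0,\infty)$ of the set of discontinuities of $\Lambda$, contradicting Theorem \ref{thm: jumps}. Hence the second case is vacuous, and \eqref{eq: J closure} holds; in particular $\overline J\setminus J$ consists exactly of the endpoints $\Lambda_{t^-},\Lambda_t$ of the individual jumps.

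Granting \eqref{eq: J closure}, I would obtain $R\cup S=(\Lambda_0,\alpha)$ by splitting on whether $x_0\in\overline J$. If $x_0\in\overline J$, then by \eqref{eq: J closure} either $x_0\in J$, so $x_0\in R$, or $x_0$ is a jump endpoint, so $\Lambda_{s(x_0)^-}<\Lambda_{s(x_0)}$ and $x_0\in\{\Lambda_{s(x_0)^-},\Lambda_{s(x_0)}\}$, giving $x_0\in S$. If $x_0\notin\overline J$, then Proposition \ref{prop: blow-ups} applies and gives $u(x_0,s(x_0)^-)\in\{0,\alpha^{-1}\}$, so $x_0\in R$ when this value is $0$ and $x_0\in S$ when it is $\alpha^{-1}$. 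The disjointness $R\cap S=\emptyset$ is then a short case check against Definition \ref{def: reg sing}: a jump endpoint lies outside $J$ and, by Lemma \ref{lem: u large jump}, has $u(x_0,s(x_0)^-)>0$, so it is never a regular point; and for $x_0\notin\overline J$ the two alternatives $u(x_0,s(x_0)^-)=0$ and $u(x_0,s(x_0)^-)=\alpha^{-1}$ provided by Proposition \ref{prop: blow-ups} are mutually exclusive. Going through the possible combinations along these lines leaves no point that is simultaneously regular and singular.

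The crux of the argument is \eqref{eq: J closure}, and within it the exclusion of limit points of $J$ beyond the endpoints of individual jumps: this is the only step that is not routine bookkeeping, and it is precisely where the non-accumulation of jump times from Theorem \ref{thm: jumps} must be combined with the continuity of $s$ and with the positivity of $s$ on $(\Lambda_0,\alpha)$, the latter being what forces the hypothetical accumulation point into the interior $(0,\infty)$ where Theorem \ref{thm: jumps} has content. Once \eqref{eq: J closure} is available, the classification into $R$ and $S$ reduces to it together with the blow-up dichotomy of Proposition \ref{prop: blow-ups} and the strict positivity of $u(\cdot,s(\cdot)^-)$ at jump endpoints from Lemma \ref{lem: u large jump}.
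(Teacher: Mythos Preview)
Your argument is essentially the paper's: both obtain \eqref{eq: J closure} from Theorem~\ref{thm: jumps} (your sequence argument via the continuity of $s$ is just an unpacking of the paper's one-line appeal to discreteness of jump times, giving $\partial J=$ jump endpoints), and both deduce $R\cup S=(\Lambda_0,\alpha)$ by combining $J\subset R$, $\partial J\subset S$, and the dichotomy $u(x_0,s(x_0)^-)\in\{0,\alpha^{-1}\}$ from Proposition~\ref{prop: blow-ups} on $(\Lambda_0,\alpha)\setminus\overline J$. Your disjointness check, like the paper's terse citation of Lemmas~\ref{lem: smooth ext}, \ref{lem: lim past regu}, \ref{lem: u large jump}, does not explicitly treat the case $x_0\in J$ with $u(x_0,s(x_0)^-)=\alpha^{-1}$; such interior points do occur in every jump (at the maximum of $x\mapsto\int_{\Lambda_{t^-}}^{\Lambda_{t^-}+x}u(\cdot,t^-)-\alpha^{-1}x$), so Definition~\ref{def: reg sing} is evidently intended with the condition $u(x_0,s(x_0)^-)=\alpha^{-1}$ restricted to $x_0\notin J$.
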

\begin{proof}
The fact that $R\cap S=\emptyset$ follows directly from Lemmas \ref{lem: smooth ext}, \ref{lem: lim past regu} and \ref{lem: u large jump}. Assume that $x_0 \in \partial J$. By Theorem \ref{thm: jumps}, jump times form a discrete set, so we must have $\Lambda_{s(x_0)-}<\Lambda_{s(x_0)}$ and $x_0\in \{\Lambda_{s(x_0)-},\Lambda_{s(x_0)}\}$. This proves that \eqref{eq: J closure} holds, and, by definition of singular points, $\partial J \subset S$. On the other hand, Proposition \ref{prop: blow-ups} shows that $(\Lambda_0,\alpha)\backslash{\partial J} \subset R\cup S$.
\end{proof}
We can now prove the first regularity result.
\begin{proof}[Proof of Theorem \ref{thm: classification}] By definition, the set $J$ is open, and $s'(x)=0$ in $J$, so we immediately obtain
\begin{equation}  \label{s jump smooth}   s \in C^{\infty}(J),\end{equation}
and, by Lemma \ref{lem: smooth ext}, 
\begin{equation} \label{usmthwi0dJ}
    u\in C^{\infty}(\{(x,t): x\in J, \;t \leq s(x)\}).
\end{equation}
By Lemma \ref{lem: u large jump} and \eqref{eq: J closure}, we have $U \cap \overline{J}= \emptyset$. This proves that $U=\tilde{U}$, where $\tilde{U}$ is as in Proposition \ref{prop: blow-ups}. Thus, $U$ is open, $s\in C^{\infty}(U)$, $\Lambda \in C^{\infty}(s(U))$ and $s'>0$ on $U$.  Since $R=U \cup J$, we therefore obtain from \eqref{s jump smooth} that $s\in C^{\infty}(R)$. We now claim that 
\begin{equation} \label{usmthwi0dk}
    u\in C^{\infty}(\{(x,t): x\in U, \;t \leq s(x)\}).
\end{equation}
By the smoothness of $\Lambda_t$, this is equivalent to showing that,  for $x_0 \in U$ and $\vep>0$ sufficiently small, $z(x,t)=u(x+\Lambda_t,t)$ satisfies
\begin{equation} \label{zsmoth3rk} z\in C^{\infty}(\overline{V_{\vep}}), \quad V_{\vep}=\{(x,t): |t-s(x_0)|<\vep, \;0 \leq x < \vep\} .\end{equation}
But, by \eqref{u equation} and the definition of $U$, $z\in C(V_{2\vep})$ solves, in the classical sense, 
\begin{equation}\begin{cases} z_{t}-\frac{1}{2}z_{xx}-\dot \Lambda_tz_x=0, &(x,t)\in V_{2\vep},\\z(0,t)=0 & t\in (s(x_0)-2\vep,s(x_0)+2\vep),\end{cases}\end{equation}
so \eqref{zsmoth3rk} follows by the boundary Schauder estimates, using again the fact that $\Lambda$ is smooth near $s(x_0)$ (see \cite[Thm 4.22]{LiebermanBook}).  Thus, in view of \eqref{usmthwi0dJ} and \eqref{usmthwi0dk}, we have completed the proof of \eqref{u smooth}. The first equality in \eqref{classical speed formula} is simply the chain rule applied to the identity $\Lambda_{s(x)}=x$, and the second equality follows, for instance, by differentiating  \eqref{lambda u eq} and using the smoothness of $u$ up to the boundary given by \eqref{usmthwi0dk}. Finally, by Lemma \ref{lem: right accum}, for every singular point $x_0\in S$ there exist $\delta=\delta(x_0)>0$ such that $(x_0,x_0+\delta({x_0}))$ contained in $R$. Hence, for any $n\in \mathbb{N}$, there are at most $\lceil n \alpha\rceil$ singular points $x_0$ with $\delta(x_0)\geq \frac{1}{n}$, which proves that the set $S$ is countable.
\end{proof}
Our final objective will be to establish global $C^1$ regularity. We begin by showing that $s$ is differentiable at any singular point that is not an endpoint of a jump.
\begin{lem} \label{lem: singu point diff} Let $\Lambda$ be a physical solution to \eqref{supercooled eqs}, and let $s$ be given by \eqref{s defi}. Assume that $x_0\in (\Lambda_0,\alpha)$ is a singular point, and assume that $x_0 \notin \overline{J}$. Then $s$ is twice differentiable at $x_0$, and $s'(x_0)=s''(x_0)=0$.   
\end{lem}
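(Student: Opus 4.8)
The plan is to read off the second-order behavior of $s$ at $x_0$ directly from the blow-up of $w$ at $(x_0,t_0)$, where $t_0:=s(x_0)$. Since $x_0\notin\overline J$, Lemma \ref{lem: J topology} excludes the jump-endpoint alternative in Definition \ref{def: reg sing}, so $x_0$ being singular forces $u(x_0,t_0^-)=\alpha^{-1}$. Hence Proposition \ref{prop: blow-ups} applies and gives, with $w_r(x,t)=r^{-2}w(x_0+rx,t_0+r^2t)$,
\[
\lim_{r\to0^+}w_r(x,t)=-\alpha^{-1}t\qquad\text{locally uniformly in }\R\times(-\infty,0].
\]
Recalling that $\{w>0\}=\{u>0\}=\{x>\Lambda_t\}$ and that, by the strict monotonicity and right-continuity of $\Lambda$ (Lemma \ref{lem: lambda strict incr}), one has $w(y,t)>0\iff t<s(y)$, the claim reduces to showing
\[
s(x)=t_0+o\big((x-x_0)^2\big)\qquad\text{as }x\to x_0,
\]
which is precisely twice differentiability of $s$ at $x_0$ with $s'(x_0)=s''(x_0)=0$.

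\textbf{Lower bound.} Fix $\vep\in(0,1)$. By the locally uniform convergence at the slice $t=-\vep$ we have $w_r(x,-\vep)\to\alpha^{-1}\vep>0$, so for all small $r$ and all $|x|=1$, $w(x_0+rx,t_0-\vep r^2)>0$, i.e.\ $s(x_0+rx)>t_0-\vep r^2$. Taking $r=|y-x_0|$ and $x=(y-x_0)/|y-x_0|$ yields $s(y)-t_0>-\vep(y-x_0)^2$ for all $y$ near $x_0$, hence $\liminf_{y\to x_0}(s(y)-t_0)/(y-x_0)^2\ge0$.

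\textbf{Upper bound.} Evaluating the blow-up at $t=0$ gives $w_r(\cdot,0)\to0$ locally uniformly, so the modulus $\omega(\sigma):=\sup_{|z-x_0|\le\sigma}w(z,t_0)$ satisfies $\omega(\sigma)=o(\sigma^2)$ as $\sigma\to0^+$. For $x<x_0$ the bound $s(x)\le t_0$ is immediate from the monotonicity of $s$, so it remains to treat $x>x_0$, and here I argue by contradiction: suppose there are $\vep_0>0$ and $x_n\downarrow x_0$ with $s(x_n)-t_0\ge\vep_0(x_n-x_0)^2>0$. Since $x_0\notin\overline J$ we have $\nu\equiv\alpha^{-1}$ in a fixed neighborhood of $x_0$, so by Proposition \ref{prop: w eq} the function $w$ solves the parabolic obstacle problem $w_t-\frac12w_{xx}=-\alpha^{-1}\chi_{\{w>0\}}$, $w\ge0$, $w_t\le0$, there; as $(x_n,s(x_n))$ lies on the free boundary, the standard non-degeneracy for this obstacle problem (see \cite{CafPetSha,BlaDolMon}) yields $c>0$, independent of $n$, with $\sup_{Q_{\rho}^-(x_n,s(x_n))}w\ge c\rho^2$ for all small $\rho$. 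Choosing $\rho_n:=\sqrt{s(x_n)-t_0}\to0$, the backward cylinder is $Q_{\rho_n}^-(x_n,s(x_n))=(x_n-\rho_n,x_n+\rho_n)\times(t_0,s(x_n)]$; using $w_t\le0$ to dominate $w$ on this cylinder by its top slice $w(\cdot,t_0)$, and using $x_n-x_0\le\rho_n/\sqrt{\vep_0}$, we get
\[
c\rho_n^2\le\sup_{|z-x_n|<\rho_n}w(z,t_0)\le\omega\big((1+\vep_0^{-1/2})\rho_n\big)=o(\rho_n^2),
\]
a contradiction for large $n$. Hence $\limsup_{x\to x_0}(s(x)-t_0)/(x-x_0)^2\le0$, which together with the lower bound gives the desired $o((x-x_0)^2)$ expansion.

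\textbf{Main obstacle.} The only nontrivial step is the upper bound. The gain from $w(\cdot,t_0)=o((x-x_0)^2)$ to $s(x)-t_0=o((x-x_0)^2)$ cannot be extracted from the pointwise identity $w(x,t_0)=\int_{t_0}^{s(x)}u(x,\tau)\,d\tau$ alone (which, via the linear non-degeneracy of $u$ in Proposition \ref{prop: u nondegeneracy}, only yields $s(x)-t_0=o(|x-x_0|)$); it genuinely requires the quadratic, backward-in-time non-degeneracy of $w$, which is where the one-phase structure $w_t\le0$ and the strict positivity of $\nu$ near $x_0$ (Lemma \ref{lem: nu lower bd}, here even $\nu\equiv\alpha^{-1}$) enter. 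Everything else is a soft consequence of the blow-up classification in Proposition \ref{prop: blow-ups}.
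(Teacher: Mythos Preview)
Your proof is correct and follows essentially the same strategy as the paper: both arguments read off the two-sided expansion $s(x)=t_0+o((x-x_0)^2)$ from the singular blow-up $w_r\to -\alpha^{-1}t$ of Proposition \ref{prop: blow-ups}, and both handle the nontrivial direction ($x>x_0$) via the quadratic non-degeneracy of the parabolic obstacle problem combined with $w_t\le0$. The only organizational difference is that the paper first extends the blow-up convergence to all of $\R\times\R$ and then applies non-degeneracy to the rescaled $w_r$, whereas you work directly with $w$, use only the slice $t=0$ (giving $\omega(\sigma)=o(\sigma^2)$), and argue by contradiction with $\rho_n=\sqrt{s(x_n)-t_0}$; this is a mild simplification since it avoids the extra compactness step.
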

\begin{proof} Let $t_0=s(x_0)$, and let $w_r$ be as in Proposition \ref{prop: blow-ups} so that, as $r\downarrow 0$,
\[w_r(x,t) \to p_2(t)=-\alpha^{-1}t \quad \text{locally uniformly in} \; \R \times (-\infty,0].\]
We then have the expansion
\[w(x,t)=-\alpha^{-1}(t-t_0) +o(|t-t_0|+|x-x_0|^2), \quad t\leq t_0,\]
and setting $t=s(x)$ yields
\begin{equation} \label{s exp one s}
  s(x)= s(x_0) + o(|x-x_0|^2), \quad x\leq x_0
\end{equation}
In view of Proposition \ref{prop: w eq}, $w_r$ is bounded in $C^{1,1}_x \cap C^{0,1}_t$, and therefore, up to a subsequence, there exists $w_0 \in C(\R\times \R)$ such that
\[w_r(x,t) \to w_0(x,t) \quad \text{ locally uniformly in}\quad \R \times \R.\]
Moreover, $w_0 \in C^{1,1}_x\cap C^{0,1}_t$, with $w_0\geq 0$, $(w_0)_t\leq 0$, and $w_0(x,0)=p_2(0)\equiv0$. This implies that $w_0(x)=p_2(t)^+$, independently of the subsequence, and the convergence holds for the entire family $\{w_r\}$. Note that, for sufficiently small $r$, $w_r$ solves the  obstacle problem
\[(w_r)_t-\frac{1}{2}(w_r)_{xx}=-\alpha^{-1}\chi_{w_r>0}, \quad (x,t)\in Q_2.\]
Therefore, by a standard maximum principle argument (see \cite[Lem. 5.1]{CafPetSha}), one has, for any $(x_1,t_1)\in \overline{ \{w_r>0\}}$ and any $\rho \in (0,1)$, the nondegeneracy property
\[\text{sup}_{(x,t)\in Q_\rho^-(x_1,t_1)}w_r\geq c \rho^2,\]
for some constant $c>0$ depending only on $\alpha$. Thus, since $w_r \to p_2^+=0$ uniformly in $(-2,2)\times[0,2]$, we must have, for any $\delta>0$, and $r$ sufficiently small, depending on $\delta$,
\[w_r \equiv 0  \quad \text{ on } [-1,1] \times [\delta,1].\]
Translating the statement back to $w$, and recalling that $w_t \leq 0$, this means that
\[w \equiv 0 \quad \text{ on } \quad  [x_0-r,x_0+r] \times [t_0+\delta r^2,\infty)\]
for $r$ sufficiently small. Thus, letting $x>x_0$ and $r=x-x_0$, we conclude that 
\[0<s(x)-s(x_0)<\delta |x-x_0|^2\]
for $|x-x_0|$ sufficiently small. Since $\delta$ can be taken arbitrarily small, this says precisely that \eqref{s exp one s} holds for $x>x_0$.
\end{proof}

Next, we show that, along regular points, the speed of the frontier $\Lambda$ tends to $+\infty$ near a singular point.
\begin{lem} \label{lem: regular point sequence} Let $\Lambda$ be a physical solution to \eqref{supercooled eqs}, let $x_0\in (\Lambda_0,\alpha)$ be a singular point, and assume that $\{x_n\}_{n=1}^{\infty}$ is a sequence of regular points such that $x_n \to x_0$ as $n \to \infty$. Then $s'(x_n)\to 0$ as $n\to \infty$.   
\end{lem}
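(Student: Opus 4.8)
The plan is to argue by contradiction, combining the speed formula \eqref{classical speed formula}, the one-sided bound $u_{xx}\leq C_R$ from Lemma \ref{lem: uxx ux bd}, and the fact that the past limit $u(x_0,s(x_0)^-)$ is strictly positive at any singular point. Since $s$ is locally constant on the open set $J$, we have $s'\equiv 0$ there, so it suffices to show that for each $\vep>0$ at most finitely many $x_n$ lie in $U=R\backslash J$ with $s'(x_n)\geq\vep$. Suppose not: there is a subsequence $x_{n_k}\in U$ with $s'(x_{n_k})\geq c>0$, and after passing to a further subsequence the $x_{n_k}$ approach $x_0$ monotonically from one side. Writing $t_k:=s(x_{n_k})$, continuity of $s$ gives $t_k\to t_0:=s(x_0)$; since $x_{n_k}\in U$, Theorem \ref{thm: classification} gives $\Lambda_{t_k}=x_{n_k}$, smoothness of $\Lambda$ near $t_k$, and $\dot\Lambda_{t_k}=1/s'(x_{n_k})\leq 1/c$.

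The first substantive step is a uniform linear upper bound for $u$ at these times. By \eqref{classical speed formula}, $\lim_{x\downarrow\Lambda_{t_k}}u_x(x,t_k)=\tfrac{2}{\alpha}\dot\Lambda_{t_k}\leq\tfrac{2}{\alpha c}$; integrating $u_{xx}\leq C_R$ in $x$ from the boundary and then using $u(\Lambda_{t_k},t_k)=0$ yields $u(x,t_k)\leq C_2(x-\Lambda_{t_k})$ for $x\in[\Lambda_{t_k},\Lambda_{t_k}+1]$, with $C_2$ depending only on $c$ and on a fixed $R$ containing a neighborhood of $(x_0,t_0)$, hence independent of $k$. Next I would fix $y>x_0$ close to $x_0$: for $k$ large one has $\Lambda_{t_k}=x_{n_k}<y<x_{n_k}+1$, so $u(y,t_k)\leq C_2(y-x_{n_k})\to C_2(y-x_0)$. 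I would then identify $\lim_k u(y,t_k)$ with $u(y,t_0^-)$: if $x_{n_k}\uparrow x_0$ then $t_k\uparrow t_0$ and $\Lambda_{t_0^-}=\lim_k\Lambda_{t_k}=x_0$, so this is Lemma \ref{lem: lim past regu}; if $x_{n_k}\downarrow x_0$ then $t_k\downarrow t_0$, $\Lambda_{t_0}=\lim_k\Lambda_{t_k}=x_0$, and $(y,t_0)$ lies in the open set $\{u>0\}$ (this uses $y>x_0=\Lambda_{t_0}$ together with right-continuity of $\Lambda$), so $u$ is continuous there and $u(y,t_k)\to u(y,t_0)=u(y,t_0^-)$. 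Either way I obtain $u(y,t_0^-)\leq C_2(y-x_0)$ for all $y>x_0$ sufficiently close to $x_0$.

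Finally, letting $y\downarrow x_0$ and using that $u(\cdot,t_0^-)$ is continuous on $[\Lambda_{t_0^-},\infty)$ by Lemma \ref{lem: lim past regu}, the left-hand side tends to $u(x_0,t_0^-)$, so $u(x_0,t_0^-)\leq 0$. This contradicts the positivity of the past limit at a singular point: if $x_0\notin\overline J$, Proposition \ref{prop: blow-ups} gives $u(x_0,t_0^-)=\alpha^{-1}$; if $x_0\in\overline J$, then by Lemma \ref{lem: J topology} it is an endpoint of a jump, and Lemma \ref{lem: u large jump} gives $u(x_0,t_0^-)\geq\alpha^{-1}$ at a left endpoint and $u(x_0,t_0^-)\in(0,\alpha^{-1}]$ at a right endpoint, so $u(x_0,t_0^-)>0$ in every case. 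I expect the main obstacle to be precisely this bookkeeping — matching the side from which $x_{n_k}$ approaches with the corresponding value $\Lambda_{t_0^\pm}$, and thereby justifying that the linear bound at time $t_k$ is centered at the boundary point that actually converges to $x_0$ and that the limit $u(y,t_k)\to u(y,t_0^-)$ is valid even when $\Lambda$ jumps at $t_0$ (which is exactly the case $x_0$ a jump endpoint, handled through the interior continuity of $u$ on $\{u>0\}$ rather than through Lemma \ref{lem: lim past regu}).
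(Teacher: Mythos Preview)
Your proof is correct and follows essentially the same approach as the paper's: both arguments combine the speed formula \eqref{classical speed formula} with the one-sided bound $u_{xx}\leq C$ from Lemma~\ref{lem: uxx ux bd} to produce a second-order Taylor estimate $u(y,s(x_n))\leq \tfrac{2}{\alpha s'(x_n)}(y-x_n)+\tfrac{C}{2}(y-x_n)^2$, and then contrast this with the strict positivity of $u(x_0,t_0^-)$ at a singular point. The paper runs this directly, deducing $1/s'(x_n)\to\infty$, whereas you argue by contradiction; and in the case $x_n<x_0$ the paper evaluates at $y=x_0$ and appeals to the $u_t$ bound (Proposition~\ref{prop: semiconvexity}) to pass from $u(x_0,s(x_n))$ to $u(x_0,t_0^-)$, while you evaluate at a fixed $y>x_0$ and use only the definition of the past limit together with continuity of $u(\cdot,t_0^-)$ from Lemma~\ref{lem: lim past regu}---a slight streamlining, since it treats both sides uniformly and avoids an explicit call to Proposition~\ref{prop: semiconvexity}.
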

\begin{proof} Since $s'(x)=0$ for all $x\in J$, we may assume that $u(x_n,s(x_n)-)=u(x_n,s(x_n))=0$ for all $n$. Assume first that $x_n <x_0$ for all $n$, so that $s(x_n)<s(x_0)$. By Definition \ref{def: reg sing} and Lemma \ref{lem: u large jump}, $m:=u(x_0,t_0-)>0$ . Therefore, by Proposition \ref{prop: semiconvexity}, for $n$ sufficiently large, we have
\[u(x_0,s(x_n))\geq u(x_0,s(x_0)-)-C(s(x_0)-s(x_n))\geq \frac{m}{2}.\]
On the other hand, by Lemma \ref{lem: uxx ux bd} and Theorem \ref{thm: classification},
\begin{equation} \label{utaylor2e12e}u(x_0,s(x_n))\leq u(x_n,s(x_n))+u_x(x_n,s(x_n))(x_0-x_n)+\frac{C}{2}(x_0-x_n)^2=\frac{2}{\alpha s'(x_n)}(x_0-x_n)+\frac{C}{2}(x_0-x_n)^2 \end{equation}
 Thus, we obtain
 \[\frac{2}{\alpha s'(x_n)}\geq \frac{m}{2(x_0-x_n)}-\frac{C}{2}(x_0-x_n) \to +\infty \quad \text{ as } n\to \infty,\]
which shows that $s'(x_n)\to 0$. 

Next, assume that $x_n >x_0$ for all $n$. Then, since $u(x_n,s(x_n)-)=0$ for all $n$, we must have $\Lambda_{s(x_0)}=x_0$ and $u(x_0+\delta,s(x_0))>0$ for $\delta>0$. By Lemmas  \ref{lem: uxx ux bd} and \ref{lem: smooth ext}, if $\delta>0$ is sufficiently small,
\begin{equation} \label{ssqeqprff1} u(x_0+\delta,s(x_0))=u(x_0+\delta,s(x_0)-)\geq u(x_0,s(x_0)-)-C\delta \geq \frac{m}{2}.\end{equation}
On the other hand, $x_n \in (x_0,x_0+\delta)$ for sufficiently large $n$, and as in \eqref{utaylor2e12e} we have
\begin{equation}  \label{ssqeqprff2} u(x_0+\delta,s(x_n))\leq \frac{2}{\alpha s'(x_n)}\delta + \frac{C}{2} \delta^2.\end{equation}
Thus, using \eqref{ssqeqprff1} and \eqref{ssqeqprff2} and letting $n\to \infty$, we obtain
\[\frac{m}{2}\leq \frac{2}{\alpha \limsup_{n\to \infty} s'(x_n)}\delta+\frac{C}{2}\delta^2. \]
Since $\delta$ can be taken arbitrarily small, we infer that $\limsup_{n\to \infty} s'(x_n)=0$. 
\end{proof}
In view of the preceding lemmas, we may now prove Theorem \ref{thm: C1 regu}.
\begin{proof}[Proof of Theorem \ref{thm: C1 regu}] The $C^{\infty}$ claim in the set $R$ follows from Theorem \ref{thm: classification}, so we only need to prove that $s$ is globally of class $C^1$. We first claim that $s$ is everywhere differentiable in $(\Lambda_0,\alpha)$. By Theorem \ref{thm: classification} and Lemma \ref{lem: singu point diff}, it suffices to show that $s$ is differentiable at every singular point $x_0\in \overline{J}$. Given that $J$ consists of regular points, it follows that $x_0\in \partial J$, which, by Lemma \ref{lem: J topology}, means that $\Lambda_{s(x_0)-}<\Lambda_{s(x_0)}$ and $x_0\in \{\Lambda_{s(x_0)-},\Lambda_{s(x_0)}\}$. Furthermore, by Theorem \ref{thm: jumps}, there must exist a small $\delta>0$ such that $(x_0-\delta,x_0+\delta) \cap \partial{J} =\{x_0\}$. But then, in view of Proposition \ref{prop: blow-ups} and Lemmas \ref{lem: singu point diff} and \ref{lem: regular point sequence}, $s$ is differentiable in $(x_0-\delta,x_0+\delta)\backslash\{x_0\}$, and $\lim_{x \to x_0} s'(x)=0$. By L'H\^opital's rule, we infer that $s$ is differentiable at $x_0$, and $s'(x_0)=0$. This shows that $s$ is everywhere differentiable and, in fact, $s'(x_0)=0$ for every singular point $x_0$. By Theorem \ref{thm: classification}, $s'$ is continuous at any regular point. Finally, by another application of Lemma \ref{lem: regular point sequence}, we conclude that $s'$ is continuous at every singular point, and therefore in all of $(\Lambda_0,\alpha)$.
\end{proof}
\subsection*{Acknowledgments}The author would like to thank I. C. Kim for valuable discussions, comments, and suggestions.

\end{document}